\newtheorem{theorem}{Theorem}
\newtheorem{lemma}{Lemma}
\newtheorem{definition}{Definition}
\newtheorem{corollary}{Corollary}
\begin{document}
{\selectlanguage{english}
\binoppenalty = 10000 %
\relpenalty   = 10000 %

\pagestyle{headings} \makeatletter
\renewcommand{\@evenhead}{\raisebox{0pt}[\headheight][0pt]{\vbox{\hbox to\textwidth{\thepage\hfill \strut {\small Grigory K. Olkhovikov}}\hrule}}}
\renewcommand{\@oddhead}{\raisebox{0pt}[\headheight][0pt]{\vbox{\hbox to\textwidth{{Restricted interpolation in stit logic}\hfill \strut\thepage}\hrule}}}
\makeatother

\title{Restricted Interpolation and Lack Thereof in Stit Logic}
\author{Grigory K. Olkhovikov\\Ruhr University Bochum\\
Department of Philosophy II; NAFO 02/299\\
Universit\"{a}tstr. 150, D-44780, Bochum, Germany\\
email: grigory.olkhovikov@rub.de, grigory.olkhovikov@gmail.com}
\date{}
\maketitle
\begin{quote}
\textbf{Abstract}. We consider the propositional logic equipped
with \emph{Chellas stit} operators for a finite set of individual
agents plus the historical necessity modality. We settle the
question of whether such a logic enjoys restricted interpolation
property, which requires the existence of an interpolant only in
cases where the consequence contains no Chellas stit operators
occurring in the premise. We show that if action operators count
as logical symbols, then such a logic has restricted interpolation
property iff the number of agents does not exceed three. On the
other hand, if action operators are considered to be non-logical
symbols, the restricted interpolation fails for any number of
agents exceeding one. It follows that unrestricted Craig
interpolation also fails for almost all versions of stit logic.
\end{quote}

\begin{quote}
\textbf{Keywords}. stit logic, interpolation, Robinson Consistency
Property
\end{quote}

\section{Introduction}

The so-called stit logic is the modal logic of actions that uses
the locution `$j$ sees to it that $A$' (where $j$ is an agent name
and $A$ a sentence) as its paradigm of action modality. The very
name `stit' derives from the acronym of this paradigm locution.
This logic has been present and explored in the literature on
philosophical logic at least since the 1980s. Many of the early
defining texts in the stit tradition were authored and coauthored
by N. Belnap, and the book \cite{belnap2001facing} is a useful
guide to the early steps of this type of research and its
attending controversies. However, in \cite{belnap2001facing} N.
Belnap comes forward as a proponent of the so-called
\emph{achievement} stit operator, whereas the later work in stit
logic mainly concentrated around the \emph{Chellas} stit and
\emph{deliberative} stit operators.\footnote{Chellas stit is named
after B. Chellas, who introduced a similar operator in
\cite{chellas}. These two stit operators are interdefinable in the
presence of historical necessity modality; therefore, one is
inclined to say that they share the same logic. Chellas stit
operator is somewhat simpler and often used as the basic one,
whereas the deliberative stit is often defined in terms of Chellas
stit.} Deliberative stit operator was independently proposed by F.
von Kutschera (see, e.g. \cite{vK1986}) and J. Horty (see, e.g.,
\cite{horty1995}). The present paper follows this line so that the
name of stit logic gets applied to the logic of Chellas
stit/deliberative stit operator with Chellas stit taken as the
basic stit operator, and deliberative stit as the defined one.

Most of the work on stit logic since these early days had a
conceptual focus, applying stit semantics to modelling
philosophical questions and exploring alternative stit operators
which were proposed as improved versions of achievement and
deliberative stit in some respect (see, e.g., \cite{broersen}).
More recently emerged the attempts to enrich stit logic with other
types of operators, e.g. the ones borrowed from temporal logic
(see, e.g., \cite{lorini}) or justification logic (see, e.g.,
\cite{OLWA} and \cite{OLWA2}). Sometimes these attempts were
intertwined with attempts to recast the stit semantics itself so
as to make it more suitable for the enrichment in question.

As for the more technical work on stit logic, it mostly
concentrated on forging axiomatizations and, to some extent,
solving the computational complexity questions. Some of the
relatively recent important contributions to this research are
e.g. \cite{HerzigSchwarzent} and \cite{balbiani}.

One of the standard refinements of completeness results is the
Craig Interpolation Property. However, to the best of our
knowledge, this direction of research in stit logic has yet to see
its first contributions. We hope that our paper will be able to
cover this gap at least to some extent. The paper mainly focuses
on a restriction of the Craig Interpolation Property which only
requires existence of an interpolant if the antecedent shares no
agent names with the consequent. However, we show that even this
weakened version of interpolation property fails for stit logic if
the logic deals with more than three different agents. Of course,
the failure of restricted Craig interpolation entails also the
failure of the unrestricted interpolation property. Therefore, an
easy corollary to the main result of this paper is the failure of
unrestricted Craig interpolation in stit logic for any number of
agents exceeding three, which yields the negative solution to the
problem of Craig interpolation for the vast majority of variants
of the basic stit logic.

We now briefly touch upon the structure of the text below. Section
\ref{preliminaries} defines the version of stit logic at hand in
terms of language, semantics, and a strongly complete
axiomatization. We also introduce the main notations to be used in
the paper and give the precise definition of the Restricted Craig
Interpolation Property for stit logic of $n$ agents. The latter
property will be the main subject of the two following sections.
We are going to show, first, that whenever our version of stit
logic has no more than three different agents, it enjoys this
property. The proof of this positive part of our main result is
given in Section \ref{S:positive}. The corresponding negative
part, saying that the Restricted Craig Interpolation Property
fails for stit logic with more than three agents, is then
formulated and proven in Section \ref{S:negative}. After that,
Section \ref{S:further} explores the various corollaries of the
main result in relation to the following topics: (a) unrestricted
Craig interpolation, (b) the Restricted Robinson Consistency
Property, and (c) the stronger versions of both unrestricted and
restricted interpolation property which treat stit operators as
non-logical symbols.

Section \ref{conclusion} sums up the preceding sections and charts
some natural continuations for the line of research presented in
the paper.

\section{Preliminaries}\label{preliminaries}

On the basis of a given a finite agent community $Ag$ and a set of
propositional variables $V$, we define the set
$\mathcal{L}^{Ag}_{V}$ of $(Ag,V)$-stit formulas as follows:
$$
A := p \mid A \to A \mid \bot \mid \Box A \mid [j]A,
$$
where $p \in V$ and $j \in Ag$. Stit formulas will be denoted by
letters $A$, $B$, $C$, $D$, decorated with sub- and superscripts
whenever needed. Formulas of the type $\Box A$ and $[j]A$ are
informally read as `$A$ is (historically) necessary' and `the
agent $j$ sees to it that $A$', respectively. We reserve $\Diamond
A$ and $\langle j\rangle A$ as the notations for the duals of
these modalities.

Modalities of the form $[j]$ for $j \in Ag$ are called
\emph{action modalities} and will be interpreted as Chellas stit
operators for the respective agent $j$. We will not use
deliberative stit operator $[d:j]$ in this paper, but it can be
defined on the basis of Chellas stit and historical necessity:
$[d:j]A := [j]A \wedge \neg\Box A$. Although $Ag$ is normally
assumed to be non-empty, in this paper we will allow for $Ag =
\emptyset$ as a border case for the sake of notational
convenience. The set $\mathcal{L}^{\emptyset}_{V}$ is then
basically a variant of the language of the logic of historical
necessity. This logic is known to coincide with propositional S5
and hence has Craig Interpolation Property.\footnote{In fact,
propositional S5 even has the stronger Lyndon interpolation
property, see e.g \cite[Theorem 5.14, p. 140]{gabbay}.} Therefore,
even though empty agent communities are allowed by our notation,
we will not consider interpolation properties of the languages
devoid of action modalities in this paper.

Stit formulas are interpreted over the respective classes of stit
models. An $(Ag,V)$-\emph{stit model} is a structure of the form
$\mathfrak{S} = \langle Tree, \leq, Choice, V\rangle$, such that:
\begin{itemize}
\item $Tree$ is a non-empty set. Elements of $Tree$ are called
\emph{moments}.

\item $\leq$ is a partial order on $Tree$ for which a temporal
interpretation is assumed.

\item $Hist(Tree, \leq)$ is the set of maximal chains in $Tree$
w.r.t. $\leq$. Since $Hist(Tree, \leq)$ is completely determined
by $Tree$ and $\leq$, it is not included into the structure of a
model as a separate component. Elements of $Hist(Tree, \leq)$ are
called \emph{histories}. The set of histories containing a given
moment $m$ will be denoted $H^\mathfrak{S}_m$. The following set
$$
MH(Tree, \leq) = \{ (m,h)\mid m \in Tree,\, h \in H^\mathfrak{S}_m
\},
$$
called the set of \emph{moment-history pairs}, will be used to
evaluate formulas in $\mathcal{L}^{Ag}_V$.

Two histories, $h, g \in H^\mathfrak{S}_m$ we call
\emph{undivided} at $m \in Tree$ and write $h \approx_m g$ iff $h$
and $g$ share some later moment $m'$. In other words, we stipulate
that:

$$
h \approx_m g \Leftrightarrow (h, g \in H^\mathfrak{S}_m)
\&(\exists m' > m)(h, g \in H^\mathfrak{S}_{m'}).
$$

\item $Choice$ is a function mapping $Tree \times Ag$ into
$2^{2^{Hist(Tree, \leq)}}$ in such a way that for any given $j \in
Ag$ and $m \in Tree$ we have as $Choice(m,j)$ (to be denoted as
$Choice^m_j$ below) a partition of $H^\mathfrak{S}_m$. For a given
$h \in H^\mathfrak{S}_m$ we will denote by $Choice^m_j(h)$ the
element of the partition $Choice^m_j$ (otherwise called a
\emph{choice cell}) containing $h$. Intuitively, the idea is that
$j$ cannot distinguish by her activity at $m$ between histories
that belong to one and the same choice cell.

\item $V$ is an evaluation function, mapping the set $V$ into
$2^{MH(Tree, \leq)}$
\end{itemize}
In what follows, for a given $(Ag,V)$-stit model  $\mathfrak{S} =
\langle Tree, \leq, Choice, V\rangle$, we will sometimes use
$Hist(\mathfrak{S})$ and $MH(\mathfrak{S})$ to denote $Hist(Tree,
\leq)$ and $MH(Tree, \leq)$, respectively.

Additionally, every stit model $\mathfrak{S}$ is required to
satisfy the following constraints:
\begin{enumerate}
\item \textbf{Historical connection}:
\begin{equation}\label{HC}\tag{\text{HC}}
(\forall m,m_1 \in Tree)(\exists m_2 \in Tree)(m_2 \leq m\, \&\,
m_2 \leq  m_1)
\end{equation}
\item \textbf{No backward branching}:
\begin{equation}\label{NBB}\tag{\text{NBB}}
(\forall m,m_1,m_2 \in Tree)((m_1 \leq m\, \&\, m_2 \leq m)
\Rightarrow (m_1 \leq m_2 \vee m_2 \leq m_1))
\end{equation}
\item \textbf{No choice between undivided histories}:
\begin{equation}\label{NCUH}\tag{\text{NCUH}}
(\forall m \in Tree)(\forall h,h' \in H^\mathfrak{S}_m)(h
\approx_m h' \Rightarrow Choice^m_j(h) = Choice^m_j(h'))
\end{equation}
for every $j \in Ag$.

\item \textbf{Independence of agents}:
\begin{equation}\label{IA}\tag{\text{IA}}
(\forall f:Ag \to 2^{H^\mathfrak{S}_m})((\forall j \in Ag)(f(j)
\in Choice^m_j) \Rightarrow \bigcap_{j \in Ag}f(j) \neq \emptyset)
\end{equation}
for every $m\in Tree$.
\end{enumerate}
We omit the motivation for these constraints, referring the reader
to the existing literature on stit logic, e.g.
\cite{belnap2001facing} and \cite{horty2001agency}. The inductive
definition of the satisfaction relation for the members of
$\mathcal{L}^{Ag}_V$ is then as follows:
\begin{align*}
&\mathfrak{S}, m, h \models p \Leftrightarrow (m,h) \in
V(p);\\
&\mathfrak{S}, m, h \models [j]A \Leftrightarrow (\forall h'
\in Choice^m_j(h))(\mathfrak{S}, m, h' \models A);\\
&\mathfrak{S}, m, h \models \Box A \Leftrightarrow (\forall h' \in
H^\mathfrak{S}_m)(\mathfrak{S}, m, h' \models A),
\end{align*}
with the usual clauses for the Boolean connectives. The notions of
satisfaction and validity are also defined in a standard way.

Stit logic, as given above, admits of the following strongly
complete axiomatization $\mathbb{S}$ which we borrow from
\cite{balbiani}.\footnote{The original proof, due to Ming Xu, used
a somewhat more expressive language allowing also to describe
equality/inequality relations between agents, see e.g. \cite[Ch.
17]{belnap2001facing}.} The axiom schemes of $\mathbb{S}$ are as
follows:
\begin{align}
&\textup{A full set of axioms for classical propositional
logic}\label{A0}\tag{\text{A0}}\\
&\textup{$S5$ axioms for $\Box$ and $[j]$ for every $j \in
Ag$}\label{A1}\tag{\text{A1}}\\
&\Box A \to [j]A \textup{ for every }j \in Ag\label{A2}\tag{\text{A2}}\\
&(\Diamond[j_1]A_1 \wedge\ldots \wedge \Diamond[j_n]A_n) \to
\Diamond([j_1]A_1 \wedge\ldots
\wedge[j_n]A_n)\label{A3}\tag{\text{A3}}
\end{align}
The assumption is that in \eqref{A3} $j_1,\ldots, j_n$ are
pairwise different.

In addition to the axioms, $\mathbb{S}$ contains two inference
rules:
\begin{align}
&\textup{From }A, A \to B \textup{ infer } B;\label{MP}\tag{\text{MP}}\\
&\textup{From }A\textup{ infer } \Box
A;\label{Nec}\tag{\text{Nec}}
\end{align}
Provability of $A$ in $\mathbb{S}$ we will denote by $\vdash A$.
It is clear that the strong completeness of $\mathbb{S}$ also
implies compactness of stit logic for any given finite community
$Ag$ of agents and any given set $V$ of propositional variables.

We introduce some further useful notations related to sets of stit
formulas. If $\Gamma \subseteq \mathcal{L}^{Ag}_V$, then we let
$\Gamma^\Box$ denote the set of all boxed formulas from $\Gamma$.
Similarly, whenever $j \in Ag$, we use $\Gamma^{[j]}$ to denote
the set $\{ [j]A \in \mathcal{L}^{Ag}_V\mid [j]A \in \Gamma\}$.

For arbitrary $Ag$, $V$, and a set $\Gamma \cup \{ A\} \subseteq
\mathcal{L}^{Ag}_V$, we extend the notation $\vdash$ to contexts
like $\Gamma \vdash A$ to mean that $\vdash (A_1 \wedge\ldots
\wedge A_r) \to A$ for some $A_1,\ldots, A_r \in \Gamma$. Then
$\Gamma \subseteq \mathcal{L}^{Ag}_V$ is called
\emph{inconsistent} iff $\Gamma \vdash \bot$, and
\emph{consistent} otherwise. Moreover, $\Gamma \subseteq
\mathcal{L}^{Ag}_V$ is $(Ag,V)$-\emph{maxiconsistent} iff it is
consistent and no consistent subset of $\mathcal{L}^{Ag}_V$
properly extends $\Gamma$. It can be shown, in the usual way, that
an arbitrary $\Gamma \subseteq \mathcal{L}^{Ag}_V$ is
$(Ag,V)$-maxiconsistent iff for every $A \in \mathcal{L}^{Ag}_V$
the set $\Gamma \cap \{ A, \neg A \}$ is a singleton. In what
follows we will need the following classical lemma about
maxiconsistent sets:
\begin{lemma}\label{maxiconsistent}
For any finite $Ag$ and any set of propositional variables $V$, if
$\Gamma \subseteq \mathcal{L}^{Ag}_V$ is consistent but not
maxiconsistent, then there is an $A \in \mathcal{L}^{Ag}_V$ such
that $\{ A, \neg A\}\cap \Gamma = \emptyset$.
\end{lemma}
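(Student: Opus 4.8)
The plan is to argue by a short case analysis built on the characterization of maxiconsistency stated just above the lemma, namely that $\Gamma$ is $(Ag,V)$-maxiconsistent iff $\Gamma \cap \{A, \neg A\}$ is a singleton for every $A \in \mathcal{L}^{Ag}_V$. Since $\Gamma$ is assumed not to be maxiconsistent, I would begin by contraposing this equivalence: there must exist some $A \in \mathcal{L}^{Ag}_V$ for which $\Gamma \cap \{A, \neg A\}$ fails to be a singleton. Because $A$ and $\neg A$ are distinct formulas, the set $\{A, \neg A\}$ has exactly two elements, so its intersection with $\Gamma$ has cardinality $0$, $1$, or $2$; failing to be a singleton leaves only the possibilities that the intersection is empty or that it equals the whole set $\{A, \neg A\}$.

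Next I would eliminate the second possibility using the consistency of $\Gamma$. Suppose toward a contradiction that $\{A, \neg A\} \subseteq \Gamma$, so that both $A \in \Gamma$ and $\neg A \in \Gamma$. Since $(A \wedge \neg A) \to \bot$ is an instance of a classical propositional tautology and hence provable by \eqref{A0}, the definition of the relation $\Gamma \vdash \cdot$ — taking the two premises drawn from $\Gamma$ to be $A$ and $\neg A$ — yields $\Gamma \vdash \bot$, i.e.\ $\Gamma$ is inconsistent, contrary to assumption. This rules out the two-element case.

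The only remaining possibility is therefore that $\Gamma \cap \{A, \neg A\} = \emptyset$, which is precisely the conclusion $\{A, \neg A\} \cap \Gamma = \emptyset$ sought by the lemma for this particular choice of $A$. I do not anticipate any genuine obstacle: the argument is a direct case split, and the sole nontrivial point is to invoke consistency in order to discard the scenario in which $\Gamma$ contains a formula together with its negation. The one auxiliary fact I rely on is the stated equivalence characterizing maxiconsistent sets; the excerpt asserts it can be shown ``in the usual way'', so I would take it as given and simply apply its contrapositive.
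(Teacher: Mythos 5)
Your proposal is correct as a derivation, but it takes a genuinely different route from the paper's. The paper argues directly from the definition of maxiconsistency: since $\Gamma$ is consistent but not maximal among consistent sets, there is a consistent $\Xi$ with $\Gamma \subset \Xi \subseteq \mathcal{L}^{Ag}_V$; picking any $A \in \Xi \setminus \Gamma$, one has $A \notin \Gamma$ by choice, and $\neg A \notin \Gamma$ because otherwise $\{A,\neg A\} \subseteq \Xi$ would contradict the consistency of $\Xi$. You instead derive the lemma from the singleton characterization stated (without proof) just before it, contraposing its right-to-left direction and then using consistency of $\Gamma$ to exclude the two-element case. Two caveats deserve mention. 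First, the direction you contrapose is precisely the nontrivial one, and its ``usual'' proof is essentially the paper's own argument for this lemma (take a consistent proper extension, pick a formula in the difference, observe that its negation cannot lie in $\Gamma$); so your route does not avoid that work but defers it to the unproven characterization, and it would become circular if the characterization were in turn proved via this lemma. Second, read literally, the right-to-left direction of the characterization is false: one can assign truth values to formulas by structural recursion so that $\neg A = A \to \bot$ always receives the value opposite to that of $A$ while $\bot$ itself receives the value true; the set of formulas evaluated true then meets the singleton condition for every $A$, yet contains $\bot$, hence is inconsistent and so not maxiconsistent. Your application survives this because your $\Gamma$ is assumed consistent, and the restricted implication ``consistent plus the singleton condition for all $A$ implies maxiconsistent'' is true --- but its proof is, once more, exactly the paper's extension argument. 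In short, your proof is valid relative to what the paper asserts, whereas the paper's proof is self-contained and is what actually underwrites the fact you invoke.
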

\begin{proof} If $\Gamma \subseteq \mathcal{L}^{Ag}_V$ is consistent but not
maxiconsistent, then choose a consistent $\Xi$ such that $\Gamma
\subset \Xi \subseteq \mathcal{L}^{Ag}_V$ and choose any $A \in
\Xi \setminus \Gamma$. Then $A \notin \Gamma$ by choice of $A$,
and if $\neg A \in \Gamma$, then $\{ A, \neg A\} \subseteq \Gamma
\cup \{ A \} \subseteq \Xi$, which contradicts the consistency of
$\Xi$ since, of course, $\vdash (A \wedge \neg A) \to \bot$.
Therefore, we must also have $\neg A \notin \Gamma$ so that $\{ A,
\neg A\}\cap \Gamma = \emptyset$.
\end{proof}

For a $\Gamma \subseteq \mathcal{L}^{Ag}_V$ we define that:
$$
|\Gamma| := \{ p \in V \mid p\textup{ occurs in }\Gamma \},
$$
and:
$$
Ag(\Gamma) := \{ j \in Ag \mid j\textup{ occurs in }\Gamma \},
$$
If $\Gamma$ is a singleton $\{A \}$, then we use the notations
$|A|$ and $Ag(A)$ instead of $|\{A\}|$ and $Ag(\{A\})$.

In this paper we will be mainly testing the applicability to stit
logic of the following property:
\begin{definition}\label{D:rcip}
For a positive integer $n$, stit logic has the \emph{Restricted}
$n$-\emph{Craig Interpolation Property} (abbreviated by
$(RCIP)_n$) iff for any set of propositional variables $V$, and
all $A, B \in \mathcal{L}^{\{1,\ldots, n\}}_V$, whenever $\vdash A
\to B$ and $Ag(A) \cap Ag(B) = \emptyset$, then there exists a $C
\in \mathcal{L}^{Ag(A) \cup Ag(B)}_{|A| \cap |B|}$ such that both
$\vdash A \to C$ and $\vdash C \to B$.
\end{definition}

\section{The case $n \leq 3$}\label{S:positive}
The main result of this section looks as follows:
\begin{theorem}\label{positive}
For every $n \leq 3$, stit logic has $(RCIP)_n$.
\end{theorem}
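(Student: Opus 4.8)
The plan is to establish Theorem~\ref{positive} model-theoretically, by reducing $(RCIP)_n$ to an amalgamation (Robinson-style consistency) statement and then performing the amalgamation under the numerical constraint. Fix $A,B \in \mathcal{L}^{\{1,\ldots,n\}}_V$ with $\vdash A \to B$ and $Ag(A)\cap Ag(B)=\emptyset$, and suppose toward a contradiction that no interpolant exists. Writing $L_\cap = \mathcal{L}^{\emptyset}_{|A|\cap|B|}$ for the shared, agent-free fragment, the set $J=\{\,C\in L_\cap \mid \vdash A \to C\,\}$ of common-language consequences of $A$ together with $\neg B$ must be consistent: otherwise, by the compactness of $\mathbb{S}$ (a consequence of its strong completeness), a finite conjunction of members of $J$ would entail $B$, and being a formula of $L_\cap \subseteq \mathcal{L}^{Ag(A)\cup Ag(B)}_{|A|\cap|B|}$ it would be an interpolant. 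Using Lemma~\ref{maxiconsistent} and a routine back-and-forth, I would then obtain a maxiconsistent $\Gamma \ni A$ over $\mathcal{L}^{Ag(A)}_{|A|}$ and a maxiconsistent $\Delta \ni \neg B$ over $\mathcal{L}^{Ag(B)}_{|B|}$ sharing the same restriction to $L_\cap$. It now suffices to merge canonical models of $\Gamma$ and $\Delta$ into a single stit model over $Ag(A)\cup Ag(B)$ that still satisfies $A$ and $\neg B$, contradicting $\vdash A \to B$.

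The core is thus an amalgamation lemma. Because $\Gamma$ and $\Delta$ agree on $L_\cap$, their canonical models $\mathfrak{S}_1$ and $\mathfrak{S}_2$ can be taken to share one and the same $\Box$-reduct: a common branching frame $\langle Tree,\leq\rangle$ on which the shared variables in $|A|\cap|B|$ receive one common valuation, while the private variables of each side are valued independently. Over this common frame I must define a single $Choice$ function for the whole community $Ag(A)\cup Ag(B)$ that restricts to the $\mathfrak{S}_1$-choices on $Ag(A)$ and to the $\mathfrak{S}_2$-choices on $Ag(B)$, and that satisfies \eqref{NCUH} together with the independence constraint \eqref{IA} for the combined community. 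Once this is achieved, the amalgam automatically verifies both formulas: truth of $A$ at a moment-history pair depends only on the $Ag(A)$-cells and the $|A|$-variables, and truth of $\neg B$ only on the $Ag(B)$-cells and the $|B|$-variables, so each formula is evaluated exactly as in its source model.

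The decisive use of the hypothesis $n\le 3$, and the step I expect to be the main obstacle, is the construction of this joint $Choice$ function subject to \eqref{IA}. Since $Ag(A)$ and $Ag(B)$ are disjoint subsets of $\{1,\ldots,n\}$ with $n\le 3$, at least one of them---say $Ag(B)$---is a singleton or empty. This asymmetry is what makes independence achievable: realizing \eqref{IA} means arranging the two families of partitions into a grid-like, mutually independent configuration over the common set of histories at each moment, and whereas two-agent families on both sides (as would arise for $n\ge 4$) cannot in general be made independent without forcing incompatible values on the shared, history-dependent variables---this is precisely the mechanism exploited for the failure in Section~\ref{S:negative}---a single additional partition can always be interleaved with an already independent family. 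Concretely, I would refine the common frame moment-wise into a product that renders the lone $Ag(B)$-agent's partition independent of the entire $Ag(A)$-family, checking that only one extra coordinate of freedom is introduced so that the shared valuation remains coherent. Verifying \eqref{IA} and \eqref{NCUH} for the amalgam and confirming that the $\Box$-reduct and the shared valuation are preserved is the technical heart; the surrounding bookkeeping---the maxiconsistent sets, the canonical-model construction, and the compactness reduction above---is routine.
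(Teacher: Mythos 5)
There is a genuine gap, and it lies in your choice of the shared language on which $\Gamma$ and $\Delta$ are required to agree. You take $L_\cap = \mathcal{L}^{\emptyset}_{|A|\cap|B|}$, the \emph{agent-free} fragment, and your entire amalgamation strategy rests on the claim that any two maxiconsistent sets $\Gamma \ni A$ and $\Delta \ni \neg B$ agreeing on $L_\cap$ can be merged into a single stit model over $Ag(A)\cup Ag(B)$. That claim is false, already for $n = 2$, and the paper itself contains the refutation: by Lemma \ref{s-counterexample}, $\vdash \Diamond[1]p \to \neg\Diamond[2]\neg p$, and the proof of Theorem \ref{strong-negative} shows that this valid implication has no interpolant in $\mathcal{L}^{\emptyset}_{\{p\}}$. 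Running your first paragraph on this example therefore yields maxiconsistent $\Gamma \ni \Diamond[1]p$ over $\mathcal{L}^{\{1\}}_{\{p\}}$ and $\Delta \ni \Diamond[2]\neg p$ over $\mathcal{L}^{\{2\}}_{\{p\}}$ that agree on $L_\cap$; yet no amalgam of the kind you describe can exist, since $\Diamond[1]p \wedge \Diamond[2]\neg p$ is unsatisfiable (by \eqref{A3} and the T-axiom for $[1]$ and $[2]$). In short, with agent-free agreement your method would establish $(SRCIP)_n$ for $n \leq 3$, contradicting Theorem \ref{strong-negative}. The point your proposal misses is exactly the content of Definition \ref{D:rcip}: the interpolant may contain action modalities for \emph{all} agents in $Ag(A)\cup Ag(B)$, and correspondingly the paper's notion of an inseparable pair forbids separation by any formula of $\mathcal{L}^{Ag(\Gamma)\cup Ag(\Delta)}_{|\Gamma|\cap|\Delta|}$ --- formulas such as $\neg\Diamond[2]\neg p$, which does separate the pair above while lying outside your $L_\cap$. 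This stronger agreement condition is precisely what the model construction needs.

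Your structural observation --- that $n \leq 3$ forces one of $Ag(A)$, $Ag(B)$ to contain at most one agent, and that this asymmetry is what makes a joint choice structure satisfying \eqref{IA} possible --- is sound, and it is indeed where the paper uses the hypothesis (the case split $r+s \leq 3$ in Lemma \ref{inseparable2}, which channels axiom \eqref{A3} through Lemma \ref{technical}). But even after repairing the shared language, your plan of building two separate canonical models and then merging them over a common $\Box$-reduct faces a further obstacle: the choice partitions coming from the two models are not freely re-arrangeable (they must be preserved to keep $A$ true on one side and $\neg B$ on the other), so independence \eqref{IA} cannot be obtained by ``interleaving'' or refining partitions post hoc. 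The paper avoids this by constructing a \emph{single} model whose histories are themselves inseparable pairs of maxiconsistent sets (the standard pairs), so that the choice cells for $Ag(A)$-agents are read off the $\Gamma$-components, those for $Ag(B)$-agents off the $\Delta$-components, and \eqref{IA} reduces to an inseparability statement discharged by Lemma \ref{inseparable2}. Your proposal correctly locates where $n \leq 3$ must enter, but the framework in which you try to exploit it cannot be made to work as stated.
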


We prepare the result by proving several technical lemmas first.
\begin{lemma}\label{technical}
The following statements are true:

1. For every agent index $j$, $[j]$ is an S5-modality.

2. Let $A, B_1,\ldots B_n, C \in \mathcal{L}^{Ag}_V$, let $i_1,
\ldots, i_n, j \in Ag$ be pairwise different, and let $\vdash
(\Box A \wedge [i_1]B_1 \wedge\ldots\wedge [i_n]B_n) \to \neg C$.
Then also $\vdash (\Box A \wedge \Diamond[i_1]B_1
\wedge\ldots\wedge \Diamond[i_n]B_n) \to \neg\Diamond[j]C$.

3. Let $A, B, C \in \mathcal{L}^{Ag}_V$, let $j \in Ag$, and let
$\vdash (\Box A \wedge [j]B) \to C$. Then also $\vdash (\Box A
\wedge \Diamond[j]B) \to \Diamond[j]C$.
\end{lemma}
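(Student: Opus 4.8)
The plan is to treat the three parts separately, using only the axioms \eqref{A0}--\eqref{A3} and the rules \eqref{MP}, \eqref{Nec}, together with standard normal-modal-logic reasoning.

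For part 1, the S5 axiom schemes for $[j]$ are already present in \eqref{A1}, so the only thing left to verify is that $[j]$ admits a necessitation rule. This follows immediately: given $\vdash A$, rule \eqref{Nec} yields $\vdash \Box A$, and then \eqref{A2} together with \eqref{MP} gives $\vdash [j]A$. Hence $[j]$ is a normal modality validating $T$, $4$, and $5$, i.e.\ an S5-modality. I would also record here two auxiliary facts used below: from \eqref{A2} applied to $\Box A$ and the $4$-scheme for $\Box$ one gets $\vdash \Box A \to [j]\Box A$ (the converse holding by the $T$-scheme for $[j]$), so $\vdash \Box A \leftrightarrow [j]\Box A$; and the $4$-scheme for $[j]$ gives $\vdash [j]B \to [j][j]B$.

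For part 3, I would first push the $[j]$ modality into the hypothesis. Applying $[j]$-necessitation (part 1) and the $K$-scheme to $\vdash (\Box A \wedge [j]B) \to C$ yields $\vdash ([j]\Box A \wedge [j][j]B) \to [j]C$. Using the two auxiliary facts from part 1 to rewrite the antecedent gives $\vdash (\Box A \wedge [j]B) \to [j]C$. It then remains to move everything under a $\Diamond$. From the $4$-scheme for $\Box$ I have $\vdash \Box A \to \Box\Box A$, and the standard normal-modal fact $\vdash (\Box\Box A \wedge \Diamond[j]B) \to \Diamond(\Box A \wedge [j]B)$ then gives $\vdash (\Box A \wedge \Diamond[j]B) \to \Diamond(\Box A \wedge [j]B)$. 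Finally, monotonicity of $\Diamond$ applied to $\vdash (\Box A \wedge [j]B) \to [j]C$ yields $\vdash \Diamond(\Box A \wedge [j]B) \to \Diamond[j]C$, and chaining the last two implications closes the case.

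Part 2 is where \eqref{A3} does the real work, and I expect it to be the main obstacle, since it is the only step that genuinely exploits the pairwise distinctness of $i_1,\ldots,i_n,j$. I would argue by reductio inside a $\Diamond$: assuming the conjunction $\Box A \wedge \Diamond[i_1]B_1 \wedge\ldots\wedge \Diamond[i_n]B_n \wedge \Diamond[j]C$, axiom \eqref{A3} (applicable precisely because the agents are pairwise different) collapses the diamonds into $\Diamond([i_1]B_1 \wedge\ldots\wedge[i_n]B_n \wedge [j]C)$. Bringing $\Box A$ inside via the same $\Box$-distribution trick as in part 3 produces $\Diamond(\Box A \wedge [i_1]B_1 \wedge\ldots\wedge[i_n]B_n \wedge [j]C)$. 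Now the $T$-scheme for $[j]$ gives $\vdash [j]C \to C$, and combined with the hypothesis $\vdash (\Box A \wedge [i_1]B_1 \wedge\ldots\wedge[i_n]B_n) \to \neg C$, the formula under the diamond is inconsistent; since $\vdash \neg\Diamond\bot$, we reach a contradiction, which is exactly $\vdash (\Box A \wedge \Diamond[i_1]B_1 \wedge\ldots\wedge \Diamond[i_n]B_n) \to \neg\Diamond[j]C$. The only care needed throughout is the bookkeeping of the $\Box$/$[j]$ interaction recorded in part 1 and the correct grouping of agents so that \eqref{A3} applies.
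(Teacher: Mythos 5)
Your proposal is correct and follows essentially the same route as the paper's own proof: Part 1 via \eqref{A1}, \eqref{Nec}, and \eqref{A2}; Part 3 by $[j]$-necessitation plus the derived fact $\vdash \Box A \to [j]\Box A$ and then S5 reasoning for $\Box$; and Part 2 by using \eqref{A3} to merge the diamonds, pulling $\Box A$ under the diamond, and refuting the resulting formula via the $T$-scheme for $[j]$ together with the necessitated hypothesis. The only differences are cosmetic (you apply $[j]C \to C$ after importing $\Box A$ rather than before, and phrase the final step as a reductio rather than as a chain of implications), so nothing further is needed.
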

\begin{proof}
(Part 1). Immediately by \eqref{A1}, \eqref{Nec}, and \eqref{A2}.

(Part 2). Assume the hypothesis of Part 2 and assume that we have:
\begin{equation}\label{E:e1}
    \vdash (\Box A \wedge [i_1]B_1 \wedge\ldots\wedge [i_n]B_n) \to \neg
C
\end{equation}
Then we reason as follows:
\begin{align}
&\vdash (\Box A \wedge (\Diamond[i_1]B_1 \wedge\ldots\wedge
\Diamond[i_n]B_n \wedge \Diamond[j]C)) \to\notag\\
&\qquad\qquad\to(\Box A \wedge \Diamond([i_1]B_1
\wedge\ldots\wedge[i_n]B_n \wedge[j]C))\label{E:e2} &&\text{(by
\eqref{A3})}\\
&\vdash (\Box A \wedge \Diamond([i_1]B_1 \wedge\ldots\wedge
[i_n]B_n \wedge [j]C)) \to\notag\\
&\qquad\qquad\to(\Box A \wedge \Diamond([i_1]B_1
\wedge\ldots\wedge[i_n]B_n \wedge C))\label{E:e3} &&\text{(by
\eqref{A1})}\\
&\vdash (\Box A \wedge \Diamond([i_1]B_1 \wedge\ldots\wedge
[i_n]B_n \wedge C)) \to\notag\\
&\qquad\qquad\to\Diamond(\Box A \wedge [i_1]B_1
\wedge\ldots\wedge[i_n]B_n \wedge C)\label{E:e4}
&&\text{($\Box$ is S5)}\\
&\vdash (\Box A \wedge (\Diamond[i_1]B_1 \wedge\ldots\wedge
\Diamond[i_n]B_n \wedge \Diamond[j]C)) \to\notag\\
&\qquad\qquad\to\Diamond(\Box A \wedge [i_1]B_1
\wedge\ldots\wedge[i_n]B_n \wedge C)\label{E:e5} &&\text{(by
\eqref{E:e2}-\eqref{E:e4})}\\
&\vdash\Box((\Box A \wedge [i_1]B_1 \wedge\ldots\wedge [i_n]B_n)
\to \neg C)\label{E:e6} &&\text{(by \eqref{E:e1} and \eqref{Nec})}\\
&\vdash\neg\Diamond(\Box A \wedge [i_1]B_1
\wedge\ldots\wedge[i_n]B_n \wedge C)\label{E:e7} &&\text{(by \eqref{E:e6} and prop. logic)}\\
&\vdash\neg(\Box A \wedge (\Diamond[i_1]B_1 \wedge\ldots\wedge
\Diamond[i_n]B_n \wedge \Diamond[j]C))\label{E:e8} &&\text{(by
\eqref{E:e5} and \eqref{E:e7})}
\end{align}
From \eqref{E:e8}, it follows by propositional logic that $\vdash
(\Box A \wedge \Diamond[i_1]B_1 \wedge\ldots\wedge
\Diamond[i_n]B_n) \to \neg\Diamond[j]C$.

(Part 3). We reason as follows:
\begin{align}
&\vdash (\Box A \wedge [j]B) \to C\label{E:p3-1} &&\text{(premise)}\\
&\vdash [j]((\Box A \wedge [j]B) \to C)\label{E:p3-2}&&\text{(by \eqref{E:p3-1} and Part 1)}\\
&\vdash ([j]\Box A \wedge [j]B) \to [j]C\label{E:p3-3}&&\text{(by \eqref{E:p3-2} and Part 1)}\\
&\vdash \Box A \to \Box\Box A\label{E:p3-4}&&\text{(by \eqref{A1})}\\
&\vdash \Box\Box A \to [j]\Box A\label{E:p3-5}&&\text{(by \eqref{A2})}\\
&\vdash \Box A \to [j]\Box A\label{E:p3-6}&&\text{(by \eqref{E:p3-4} and \eqref{E:p3-5})}\\
&\vdash (\Box A \wedge [j]B) \to [j]C\label{E:p3-7}&&\text{(by \eqref{E:p3-3} and \eqref{E:p3-6})}\\
&\vdash (\Box A \wedge \Diamond[j]B) \to
\Diamond[j]C\label{E:p3-8}&&\text{(by \eqref{E:p3-7} and S5
properties of $\Box$)}
\end{align}
\end{proof}

Assume that $V$ is a set of propositional variables and $Ag$ a
finite community of agents. A pair $(\Gamma,\Delta)$ of sets of
$(Ag, V)$-stit formulas, is called \emph{inseparable}, iff
$Ag(\Gamma) \cap Ag(\Delta) = \emptyset$, and for no $A \in
\mathcal{L}^{Ag(\Gamma) \cup Ag(\Delta)}_{|\Gamma| \cap |\Delta
|}$ it is true that both $\Gamma \vdash A$ and $\Delta \vdash \neg
A$. Below we basically repeat the classical argument for the proof
of the following standard lemma about inseparability:
\begin{lemma}\label{inseparable1}
Let $(\Gamma,\Delta)$ be an inseparable pair, and assume that both
$|\Gamma|$ and $|\Delta |$ are at most countable.\footnote{This
lemma also holds for uncountable sets of variables but we will not
need this more general version in the present paper.} Then:
\begin{enumerate}
\item There exist $\Gamma'$ and $\Delta'$ such that $\Gamma
\subseteq \Gamma' \subseteq \mathcal{L}^{Ag(\Gamma)}_{|\Delta |}$,
$\Delta \subseteq \Delta' \subseteq
\mathcal{L}^{Ag(\Delta)}_{|\Delta |}$, $(\Gamma',\Delta')$ is
inseparable, $\Gamma'$ is $(Ag(\Gamma), |\Gamma|)$-maxiconsistent,
and $\Delta'$ is $(Ag(\Delta), |\Delta|)$-maxiconsistent.

\item If $\Gamma' \subseteq \Gamma$ and $\Delta' \subseteq
\Delta$, then $(\Gamma',\Delta')$ is inseparable.
\end{enumerate}
\end{lemma}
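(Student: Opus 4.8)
The plan is to treat Part 2 as a routine monotonicity observation and to spend the real effort on the Lindenbaum-style construction in Part 1. For Part 2, suppose $\Gamma' \subseteq \Gamma$ and $\Delta' \subseteq \Delta$ with $(\Gamma,\Delta)$ inseparable. Since $Ag(\Gamma') \subseteq Ag(\Gamma)$ and $Ag(\Delta') \subseteq Ag(\Delta)$, disjointness of the agent sets is inherited; and any $C \in \mathcal{L}^{Ag(\Gamma') \cup Ag(\Delta')}_{|\Gamma'| \cap |\Delta'|}$ separating $(\Gamma',\Delta')$ also lies in the larger shared language $\mathcal{L}^{Ag(\Gamma) \cup Ag(\Delta)}_{|\Gamma| \cap |\Delta|}$ and, by monotonicity of $\vdash$, separates $(\Gamma,\Delta)$ — a contradiction. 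Hence $(\Gamma',\Delta')$ is inseparable.

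For Part 1, the crux is a one-step extension lemma: if $(\Gamma,\Delta)$ is inseparable and $A \in \mathcal{L}^{Ag(\Gamma)}_{|\Gamma|}$, then at least one of $(\Gamma \cup \{A\}, \Delta)$ and $(\Gamma \cup \{\neg A\}, \Delta)$ is again inseparable (and symmetrically for extensions on the $\Delta$-side). First I would note that because $|A| \subseteq |\Gamma|$ and $Ag(A) \subseteq Ag(\Gamma)$, adjoining $A$ or $\neg A$ changes neither the shared variable set $|\Gamma| \cap |\Delta|$ nor the disjointness of the agent sets; this is exactly why the candidate is drawn from $\Gamma$'s own language, and it is the point on which the whole argument turns. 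Now suppose toward a contradiction that both extensions fail. Then there are separators $C_1, C_2 \in \mathcal{L}^{Ag(\Gamma) \cup Ag(\Delta)}_{|\Gamma| \cap |\Delta|}$ with $\Gamma \cup \{A\} \vdash C_1$, $\Delta \vdash \neg C_1$, $\Gamma \cup \{\neg A\} \vdash C_2$, and $\Delta \vdash \neg C_2$. From the first and third of these, $\Gamma \vdash A \to C_1$ and $\Gamma \vdash \neg A \to C_2$, whence $\Gamma \vdash C_1 \vee C_2$ by propositional reasoning; from the second and fourth, $\Delta \vdash \neg(C_1 \vee C_2)$. Since $C_1 \vee C_2$ still belongs to the shared language, it separates $(\Gamma,\Delta)$, contradicting inseparability.

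With the one-step lemma in hand, I would enumerate the two relevant languages $\mathcal{L}^{Ag(\Gamma)}_{|\Gamma|}$ and $\mathcal{L}^{Ag(\Delta)}_{|\Delta|}$ — both countable, since each agent set is finite and each variable set is at most countable — and build an increasing chain of inseparable pairs interleaving the two enumerations, at each stage invoking the one-step lemma (in its $\Gamma$- or its $\Delta$-version) to decide the next formula. Setting $\Gamma'$ and $\Delta'$ to be the unions of the two chains, inseparability passes to the limit: any separator $C$ would, by the finite-conjunction reading of $\vdash$, already separate some finite stage, and the shared language is constant along the chain (agents stay within $Ag(\Gamma)$ and $Ag(\Delta)$, variables within $|\Gamma|$ and $|\Delta|$), so this would contradict inseparability of that stage. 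Finally, inseparability of $(\Gamma',\Delta')$ forces each side to be consistent — otherwise $\bot$, which lies in every shared language, would separate the pair — and since every formula of the respective language has been decided, the maxiconsistency characterization recalled before Lemma \ref{maxiconsistent} shows that $\Gamma'$ is $(Ag(\Gamma), |\Gamma|)$-maxiconsistent and $\Delta'$ is $(Ag(\Delta), |\Delta|)$-maxiconsistent.

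The only delicate point is the bookkeeping on agents and variables in the one-step lemma: everything hinges on the candidate formula being taken from the extended set's own language, so that neither the shared variable set nor the agent-disjointness is disturbed and the disjunction $C_1 \vee C_2$ of the two hypothetical separators remains a legitimate separator. Once this is secured, the remainder is the classical chain construction and its limit argument.
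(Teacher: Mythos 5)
Your proof is correct, and it runs on the same engine as the paper's: a Lindenbaum-style chain over enumerations of $\mathcal{L}^{Ag(\Gamma)}_{|\Gamma|}$ and $\mathcal{L}^{Ag(\Delta)}_{|\Delta|}$, whose crucial combinatorial step is that two hypothetical separators $C_1$, $C_2$ merge into the single separator $C_1 \vee C_2$, which still lies in the shared language. The difference is purely in where that step is deployed, and it is a genuine difference in decomposition. The paper's chain is greedy: it adds $A_r$ only if inseparability survives and otherwise skips it, so the limit sets are not obviously negation-complete, and the disjunction trick is invoked \emph{post hoc} (Claim 3 of the paper's proof) to show that $A$ and $\neg A$ cannot both have been skipped --- which requires combining separators arising at two different stages $r$ and $r'$ of the construction at the common stage $\max(r,r')$. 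You instead front-load the trick into a one-step extension lemma (``either $A$ or $\neg A$ can be adjoined preserving inseparability''), so your chain decides every formula as it goes, and maxiconsistency of the limit follows at once from consistency together with the characterization of maxiconsistency recalled before Lemma \ref{maxiconsistent}. The trade-off is even: your version buys a cleaner limit stage, where only consistency and inseparability of $(\Gamma',\Delta')$ remain to be checked (both by the finite-conjunction reading of $\vdash$, exactly as in the paper's Claims 2 and 4), at the price of proving the extension lemma before the construction rather than after it. You were also right to single out, and to verify, the bookkeeping point on which both versions hinge: adjoining formulas of $\mathcal{L}^{Ag(\Gamma)}_{|\Gamma|}$ to the left component changes neither $Ag(\Gamma) \cup Ag(\Delta)$ nor $|\Gamma| \cap |\Delta|$, so the shared language is constant along the chain. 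Your Part 2 argument (monotonicity of $\vdash$ plus inclusion of shared languages) is the same observation the paper dismisses as immediate from the definition of separability.
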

\begin{proof} (Part 1) We proceed as in the case of classical
logic. We first enumerate the formulas in
$\mathcal{L}^{Ag(\Gamma)}_{|\Gamma |}$ as $A_0,\ldots,A_s,\ldots,$
and the formulas in $\mathcal{L}^{Ag(\Delta)}_{|\Delta |}$ as
$B_0,\ldots,B_s,\ldots,$. We then define two increasing sequences
of sets of formulas:
$$
\Gamma = \Gamma_0 \subseteq \ldots \subseteq \Gamma_s \subseteq
\ldots
$$
and
$$
\Delta = \Delta_0 \subseteq \ldots \subseteq \Delta_s \subseteq
\ldots
$$
in $\mathcal{L}^{Ag(\Gamma)}_{|\Gamma |}$ and
$\mathcal{L}^{Ag(\Delta)}_{|\Delta |}$, respectively. The
definition is as follows. $\Gamma_0$ and $\Delta_0$ are just
$\Gamma$ and $\Delta$, and whenever $\Gamma_r$ and $\Delta_r$ are
defined for an $r \in \omega$, then we set:
$$
\Gamma_{r + 1} = \left\{%
\begin{array}{ll}
    \Gamma_r \cup \{ A_r\}, & \hbox{if $(\Gamma_r \cup \{ A_r\}, \Delta_r)$ is inseparable;} \\
    \Gamma_r, & \hbox{otherwise.} \\
\end{array}%
\right.
$$
and, further:
$$
\Delta_{r + 1} = \left\{%
\begin{array}{ll}
    \Delta_r \cup \{ B_r\}, & \hbox{if $(\Gamma_{r+1} , \Delta_r\cup \{ B_r\})$ is inseparable;} \\
    \Delta_r, & \hbox{otherwise.} \\
\end{array}%
\right.
$$

\emph{Claim 1}. For every $r \in \omega$, the pairs $(\Gamma_r,
\Delta_r)$ and $(\Gamma_{r + 1}, \Delta_r)$ are inseparable.

The Claim is proved by induction on $r$. If $r = 0$ then
$(\Gamma_0, \Delta_0) = (\Gamma, \Delta)$ is inseparable by the
assumption of the lemma, and the inseparability of $(\Gamma_1,
\Delta_0)$ follows by the definition of $\Gamma_1$. If $r = s +
1$, then $(\Gamma_{s+1}, \Delta_s)$ is inseparable by the
induction hypothesis, whence the inseparability of $(\Gamma_{s+1},
\Delta_{s+1})$ follows by the definition of $\Delta_{s+1}$. From
the latter, the inseparability of $(\Gamma_{s+2}, \Delta_{s+1})$
follows by the definition of $\Gamma_{s+2}$. Claim 1 is proved.

We now set:
$$
\Gamma' := \bigcup_{s \in \omega}\Gamma_s;\qquad \Delta' :=
\bigcup_{s \in \omega}\Delta_s.
$$
We clearly have both:
\begin{equation}\label{E:u1}
\Gamma \subseteq \Gamma_1 \subseteq \ldots \subseteq \Gamma_s
\subseteq \ldots\subseteq \Gamma' \subseteq
\mathcal{L}^{Ag(\Gamma)}_{|\Gamma |}
\end{equation}
and:
\begin{equation}\label{E:u2}
\Delta \subseteq \Delta_1 \subseteq \ldots \subseteq \Delta_s
\subseteq \ldots \subseteq \Delta' \subseteq
\mathcal{L}^{Ag(\Delta)}_{|\Delta |}
\end{equation}
We now show a series of further claims:

\emph{Claim 2}. The sets $\Gamma'$, $\Delta'$ are consistent

Indeed, if $\Gamma'$ is inconsistent then $\vdash A_{t_1} \wedge
\ldots \wedge A_{t_r} \to \bot$ for some $A_{t_1},\ldots, A_{t_r}$
in the above enumeration of $\mathcal{L}^{Ag(\Gamma)}_{|\Gamma |}$
such that $A_{t_1},\ldots, A_{t_r} \in \Gamma'$. Then, by
definition of $\Gamma'$, we must also have $A_{t_1},\ldots,
A_{t_r} \in \Gamma_s$, where $s = max(t_1,\ldots, t_r) + 1$ so
that we have $\Gamma_s \vdash \bot$. Of course, we also have
$\Delta_s \vdash \neg\bot$, and since $\bot \in
\mathcal{L}^{Ag(\Gamma) \cup Ag(\Delta)}_{|\Gamma |\cap
|\Delta|}$, it follows that $(\Gamma_s, \Delta_s)$ is separable, a
contradiction to Claim 1. Therefore, $\Gamma'$ is consistent, and
the consistency of $\Delta'$ is established in a similar way.

\emph{Claim 3}. The sets $\Gamma'$, $\Delta'$ are $(Ag(\Gamma),
|\Gamma|)$-maxiconsistent, and $(Ag(\Delta),
|\Delta|)$-maxiconsistent, respectively.

Indeed, if $\Gamma'$ is not  $(Ag(\Gamma),
|\Gamma|)$-maxiconsistent, then it follows from Claim 2 and Lemma
\ref{maxiconsistent}, that there is an $A \in
\mathcal{L}^{Ag(\Gamma)}_{|\Gamma |}$ such that $\{ A, \neg A \}
\cap \Gamma' = \emptyset$. Then we will have $A = A_r$ and $\neg A
= A_{r'}$ for some $r,r' \in \omega$ in terms of our enumeration
of $\mathcal{L}^{Ag(\Gamma)}_{|\Gamma |}$. Since $A_r, A_{r'}
\notin \Gamma'$ we will have, by definition of $\Gamma'$, that
$(\Gamma_r \cup \{ A_r\}, \Delta_r)$ and $(\Gamma_{r'} \cup \{
A_{r'}\}, \Delta_{r'})$ are separable. This means that there exist
some $A^r_1,\ldots, A^r_{t_1} \in \Gamma_r$, $A^{r'}_1,\ldots,
A^{r'}_{t_2} \in \Gamma_{r'}$, $B^r_1,\ldots, B^r_{t_3} \in
\Delta_r$, $B^{r'}_1,\ldots, B^{r'}_{t_4} \in \Delta_{r'}$, and
$C, D \in \mathcal{L}^{Ag(\Gamma) \cup Ag(\Delta)}_{|\Gamma |\cap
|\Delta|}$ such that all of the following holds:
\begin{align}
&\vdash (A^r_1\wedge \ldots\wedge A^r_{t_1} \wedge A) \to C\label{E:f1}\\
&\vdash (B^r_1\wedge \ldots\wedge B^r_{t_3}) \to \neg C\label{E:f2}\\
&\vdash (A^{r'}_1\wedge \ldots\wedge A^{r'}_{t_2} \wedge \neg A) \to D\label{E:f3}\\
&\vdash (B^{r'}_1\wedge \ldots\wedge B^{r'}_{t_4}) \to \neg
D\label{E:f4}
\end{align}
We then infer, by propositional logic, that:
\begin{align}
&\vdash (\bigwedge^{t_1}_{s = 1}A^r_s\wedge \bigwedge^{t_2}_{s = 1}A^{r'}_s) \to (C \vee D)\label{E:f5}\\
&\vdash (\bigwedge^{t_3}_{s = 1}B^r_s\wedge \bigwedge^{t_4}_{s =
1}B^{r'}_s) \to \neg(C \vee D)\label{E:f6}
\end{align}
Now set $r'' := max(r, r')$. By \eqref{E:u1} and \eqref{E:u2} we
know that $\{ A^r_s \mid 1 \leq s \leq t_1 \}\cup \{ A^{r'}_s \mid
1 \leq s \leq t_2 \} \subseteq \Gamma_{r''}$ and that $\{ B^r_s
\mid 1 \leq s \leq t_3 \}\cup \{ B^{r'}_s \mid 1 \leq s \leq t_4
\} \subseteq \Delta_{r''}$. We also clearly have that $C \vee D
\in \mathcal{L}^{Ag(\Gamma) \cup Ag(\Delta)}_{|\Gamma |\cap
|\Delta|}$. Therefore, it follows from \eqref{E:f5} and
\eqref{E:f6} that $(\Gamma_{r''}, \Delta_{r''})$ is separable, in
contradiction to Claim 1. Therefore, $\Gamma'$ must be
$(Ag(\Gamma), |\Gamma|)$-maxiconsistent. Maxiconsistency of
$\Delta'$ is shown in a similar way.

\emph{Claim 4}. $(\Gamma', \Delta')$ is inseparable.

Since $\Gamma'$, $\Delta'$ are maxiconsistent, they are closed for
finite conjunctions. Therefore, we can assume wlog, that there are
$A \in \Gamma'$, $B \in \Delta'$ and $C \in
\mathcal{L}^{Ag(\Gamma) \cup Ag(\Delta)}_{|\Gamma |\cap |\Delta|}$
such that all of the following holds:
\begin{align}
&\vdash A \to C\label{E:f7}\\
&\vdash B \to \neg C\label{E:f8}
\end{align}
Then let $r,s \in \omega$ be such that $A \in \Gamma_r$ and $B \in
\Delta_s$. Setting $t := max(r,s)$, we know that $A \in \Gamma_t$
and $B \in \Delta_t$ whence it follows that $(\Gamma_t, \Delta_t)$
is separable, in contradiction to Claim 1.

Claims 2--4 then imply the first part of the Lemma.

(Part 2). Immediate from the definition of separability.
\end{proof}
\begin{lemma}\label{L:separability}
If $(\Gamma,\Delta)$ is separable then for some finite $\Gamma'
\subseteq \Gamma$ and $\Delta' \subseteq \Delta$ the pair
$(\Gamma',\Delta')$ is also separable.
\end{lemma}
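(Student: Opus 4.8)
The plan is to unwind the definition of separability into a disjunction and to treat each disjunct by extracting the finite data that already witnesses it. Recall that $(\Gamma,\Delta)$ is inseparable iff $Ag(\Gamma) \cap Ag(\Delta) = \emptyset$ and no $A \in \mathcal{L}^{Ag(\Gamma) \cup Ag(\Delta)}_{|\Gamma| \cap |\Delta|}$ satisfies both $\Gamma \vdash A$ and $\Delta \vdash \neg A$. Negating this conjunction, separability of $(\Gamma,\Delta)$ means that either (a) $Ag(\Gamma) \cap Ag(\Delta) \neq \emptyset$, or (b) there is some $A \in \mathcal{L}^{Ag(\Gamma) \cup Ag(\Delta)}_{|\Gamma| \cap |\Delta|}$ with $\Gamma \vdash A$ and $\Delta \vdash \neg A$. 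I would prove the lemma by exhibiting finite $\Gamma' \subseteq \Gamma$ and $\Delta' \subseteq \Delta$ in each of these two cases.

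In case (a) I would simply fix an agent $j \in Ag(\Gamma) \cap Ag(\Delta)$, pick a single formula $G \in \Gamma$ with $j \in Ag(G)$ and a single formula $D \in \Delta$ with $j \in Ag(D)$, and set $\Gamma' = \{G\}$, $\Delta' = \{D\}$. Then $j \in Ag(\Gamma') \cap Ag(\Delta')$, so this intersection is nonempty and $(\Gamma',\Delta')$ is separable via clause (a) of the definition.

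Case (b) is the substantial one. The derivations underlying $\Gamma \vdash A$ and $\Delta \vdash \neg A$ each use only finitely many premises, so there are finite $\Gamma_0 \subseteq \Gamma$ and $\Delta_0 \subseteq \Delta$ with $\vdash \big(\bigwedge \Gamma_0\big) \to A$ and $\vdash \big(\bigwedge \Delta_0\big) \to \neg A$; these relations clearly persist under enlarging $\Gamma_0$ and $\Delta_0$. The only obstacle is that $A$ need not lie in the restricted language $\mathcal{L}^{Ag(\Gamma_0) \cup Ag(\Delta_0)}_{|\Gamma_0| \cap |\Delta_0|}$ attached to these finite sets, since the agents and variables of $A$, though drawn from $Ag(\Gamma) \cup Ag(\Delta)$ and from $|\Gamma| \cap |\Delta|$, may have entered via formulas not chosen into $\Gamma_0$ or $\Delta_0$. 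To repair this I would enlarge the two finite sets by adding witnesses: for each of the finitely many $k \in Ag(A)$, add to $\Gamma_0$ or $\Delta_0$ one formula of $\Gamma$ or $\Delta$ in which $k$ occurs (one side suffices, since $Ag(A) \subseteq Ag(\Gamma) \cup Ag(\Delta)$); and for each of the finitely many $p \in |A|$, using $|A| \subseteq |\Gamma| \cap |\Delta|$, add one formula of $\Gamma$ containing $p$ together with one formula of $\Delta$ containing $p$. Calling the resulting still-finite sets $\Gamma'$ and $\Delta'$, I obtain $Ag(A) \subseteq Ag(\Gamma') \cup Ag(\Delta')$ and $|A| \subseteq |\Gamma'| \cap |\Delta'|$, whence $A \in \mathcal{L}^{Ag(\Gamma') \cup Ag(\Delta')}_{|\Gamma'| \cap |\Delta'|}$, while $\Gamma' \vdash A$ and $\Delta' \vdash \neg A$ still hold. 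Thus $(\Gamma',\Delta')$ is separable.

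The point that needs care is exactly this language-matching step in case (b): the derivations come out finite for free, but one must explicitly secure the agents and, more delicately, the shared variables of the separating formula inside the finite subsets — the variables because they must land in the \emph{intersection} $|\Gamma'| \cap |\Delta'|$, which forces a witness to be drawn from each of $\Gamma$ and $\Delta$ separately. Everything else is routine bookkeeping, and the finiteness of $Ag(A)$ and $|A|$ guarantees that the enlargements keep $\Gamma'$ and $\Delta'$ finite.
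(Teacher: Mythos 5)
Your proof is correct, and its core --- extracting the finitely many premises actually used in the derivations witnessing $\Gamma \vdash A$ and $\Delta \vdash \neg A$ --- is exactly the paper's argument. But your two additions are not mere bookkeeping; they repair real defects in the paper's own proof. The paper ignores your case (a) entirely (it tacitly equates separability with the existence of a separating formula), and, more importantly, it stops where your case (b) begins: it sets $\Gamma' := \{A_1,\ldots,A_r\}$ and $\Delta' := \{B_1,\ldots,B_s\}$ and simply declares this pair separable, with no analogue of your witness-enlargement step. As you observe, this is a genuine gap: the language $\mathcal{L}^{Ag(\Gamma')\cup Ag(\Delta')}_{|\Gamma'|\cap|\Delta'|}$ shrinks with the pair, so $A$ need not lie in it, and then nothing guarantees that $(\Gamma',\Delta')$ is separable at all. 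In fact, the paper's own negative result shows the gap can be fatal: put $G := \Diamond([1]p \wedge [2](p \to q))$, $D := \Diamond([3]r \wedge [4](r \to \neg q))$, $\Gamma := \{G,\ r \vee \neg r\}$, $\Delta := \{D\}$, and $A := \neg D$. Then $A \in \mathcal{L}^{Ag(\Gamma)\cup Ag(\Delta)}_{|\Gamma|\cap|\Delta|} = \mathcal{L}^{\{1,2,3,4\}}_{\{q,r\}}$, we have $\vdash G \to A$ by Lemma \ref{counterexample} and $\vdash D \to \neg A$ by propositional logic, so $(\Gamma,\Delta)$ is separable and both derivations use only the premises $G$ and $D$; yet the pair $(\{G\},\{D\})$ that the paper's recipe then licenses is \emph{inseparable}, since a separating formula for it would have to lie in $\mathcal{L}^{\{1,2,3,4\}}_{\{q\}}$ and would thus be an interpolant for $\vdash G \to \neg D$, which the proof of Theorem \ref{negative} shows does not exist. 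Your enlargement --- for each shared variable of $A$, adding a witnessing formula from each of $\Gamma$ and $\Delta$ separately (here, adding $r \vee \neg r$ to the left component) --- is precisely what closes this hole, and your case (a) covers the other, trivial way a pair can be separable. In short: same skeleton as the paper, but yours is the complete proof of the (true) lemma.
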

\begin{proof}
If $(\Gamma,\Delta)$ is separable then for some $A \in
\mathcal{L}^{Ag(\Gamma) \cup Ag(\Delta)}_{|\Gamma| \cap |\Delta
|}$ it is true that both $\Gamma \vdash A$ and $\Delta \vdash \neg
A$. By definition, this means that there are $A_1,\ldots, A_r \in
\Gamma$ and $B_1,\ldots, B_s \in \Delta$ such that both $\vdash
(A_1\wedge \ldots\wedge A_r) \to A$ and $\vdash (B_1\wedge
\ldots\wedge B_s) \to\neg A$. Therefore, we can set $\Gamma' := \{
A_1,\ldots, A_r \}$ and $\Delta' := \{ B_1,\ldots, B_s \}$.
\end{proof}
Next we prove two lemmas which sum up some important facts about
inseparability that are peculiar to stit logic:
\begin{lemma}\label{inseparable2}
Let $V$ be a set of propositional variables, let $n \leq 3$, and
let $\Gamma, \Delta \subseteq \mathcal{L}^{\{ 1,\ldots, n \}}_V$
be such that $(\Gamma,\Delta)$ is inseparable. Moreover, assume
that $\Gamma$ is $(Ag(\Gamma), |\Gamma|)$-maxiconsistent and
$\Delta$ is $(Ag(\Delta), |\Delta|)$-maxiconsistent. Finally,
assume that there exist $\Diamond[j_1]A_1,\ldots,\Diamond[j_r]A_r
\in \Gamma$, and $\Diamond[i_1]B_1,\ldots,\Diamond[i_s]B_s \in
\Delta$  such that $j_1,\ldots, j_r \in Ag(\Gamma)$ are pairwise
different and $i_1,\ldots, i_s \in Ag(\Delta)$ are pairwise
different.

Then the pair:
\begin{equation}\label{pair}
(\Gamma^\Box \cup \{ [j_1]A_1,\ldots,[j_r]A_r\}, \Delta^\Box\cup\{
[i_1]B_1,\ldots,[i_s]B_s\})
\end{equation}
is inseparable.
\end{lemma}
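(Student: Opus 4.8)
The plan is to argue by contradiction. Assuming the pair \eqref{pair} is separable, I would first dispose of the agent-disjointness clause in the definition of inseparability: every boxed formula in $\Gamma^\Box$ comes from $\Gamma$ and each $[j_k]A_k$ is a subformula of $\Diamond[j_k]A_k \in \Gamma$, so the left component of \eqref{pair} only mentions agents from $Ag(\Gamma)$, and symmetrically the right component only mentions agents from $Ag(\Delta)$; as $(\Gamma,\Delta)$ is inseparable these are disjoint. Hence separability would have to come from a genuine interpolant: some $C \in \mathcal{L}^{Ag(\Gamma)\cup Ag(\Delta)}_{|\Gamma|\cap|\Delta|}$ (the variable set can only shrink when passing to \eqref{pair}) with the two components proving $C$ and $\neg C$, respectively. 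Unwinding the definition of $\vdash$ to extract finite witnesses, merging the boxed premises into a single $\Box A$ on the left and $\Box A'$ on the right by the S5 law $\bigwedge_t\Box D_t \leftrightarrow \Box\bigwedge_t D_t$ of \eqref{A1}, and harmlessly padding the antecedents so that all of $[j_1]A_1,\ldots,[j_r]A_r$ and all of $[i_1]B_1,\ldots,[i_s]B_s$ occur, I arrive at
\[
\vdash (\Box A \wedge [j_1]A_1 \wedge\ldots\wedge [j_r]A_r) \to C, \qquad \vdash (\Box A' \wedge [i_1]B_1 \wedge\ldots\wedge [i_s]B_s) \to \neg C,
\]
where $\Gamma \vdash \Box A$, $\Gamma \vdash \Diamond[j_k]A_k$, $\Delta \vdash \Box A'$, and $\Delta \vdash \Diamond[i_l]B_l$.

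The decisive combinatorial observation is that $j_1,\ldots,j_r \in Ag(\Gamma)$ and $i_1,\ldots,i_s \in Ag(\Delta)$ are together $r+s$ pairwise distinct agents drawn from $\{1,\ldots,n\}$, so $r + s \leq n \leq 3$ and therefore $\min(r,s) \leq 1$. I would then manufacture, in each resulting case, an interpolant witnessing separability of $(\Gamma,\Delta)$ itself, contradicting the hypothesis. When $s = 0$ I would take $\Diamond C$: necessitating the first implication (rule \eqref{Nec}) and combining the diamonds $\Diamond[j_k]A_k$ through \eqref{A3} yields $\Gamma \vdash \Diamond C$, while necessitating the second gives $\Delta \vdash \Box\neg C$; the case $r = 0$ is symmetric, with $\Box C$. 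When $r = 1$ I would take $\Diamond[j_1]C$: Part 3 of Lemma \ref{technical} applied to the first implication gives $\Gamma \vdash \Diamond[j_1]C$, and Part 2 applied to the second, wrapping with the agent $j_1$ (distinct from every $i_l$ because $Ag(\Gamma)\cap Ag(\Delta) = \emptyset$), gives $\Delta \vdash \neg\Diamond[j_1]C$. The case $s = 1$ is the mirror image, with interpolant $\neg\Diamond[i_1]\neg C$, obtained by applying Part 2 to the first implication and Part 3 to the second. In every case the interpolant lies in $\mathcal{L}^{Ag(\Gamma)\cup Ag(\Delta)}_{|\Gamma|\cap|\Delta|}$, so $(\Gamma,\Delta)$ is separable, the desired contradiction.

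The main obstacle — and the place where the bound $n \leq 3$ is genuinely used — is that Part 3 of Lemma \ref{technical} lets me push only a \emph{single} action modality back through a $\Diamond$ while keeping the interpolant free of the auxiliary formulas $A_k, B_l$, which may contain non-common variables and so cannot appear in an interpolant for $(\Gamma,\Delta)$. Consequently the clean constructions above are available only when one of the two sides contributes at most one agent, i.e.\ when $\min(r,s)\le 1$, and this is exactly what $r+s\le 3$ secures. I expect that if both $r \geq 2$ and $s \geq 2$ — which $r+s \leq 3$ excludes, but which becomes possible precisely when $n \geq 4$ — no such interpolant can be produced; this is what the negative result of Section \ref{S:negative} will confirm. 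A secondary point that needs care throughout is the bookkeeping keeping each candidate interpolant inside the prescribed common language: the variables of $C$ already lie in $|\Gamma|\cap|\Delta|$, and each added modality $[j_1]$ or $[i_1]$ is licensed because interpolants for $(\Gamma,\Delta)$ are allowed to range over the union $Ag(\Gamma)\cup Ag(\Delta)$.
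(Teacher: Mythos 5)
Your proposal is correct and follows essentially the same route as the paper's own proof: reductio via compactness and the S5 laws to extract the two finite implications with merged boxes, the observation that $r+s\le n\le 3$ forces $\min(r,s)\le 1$, and then Parts 2 and 3 of Lemma \ref{technical} (plus \eqref{Nec} and \eqref{A3} when one side contributes no action modalities) to manufacture a separating formula for $(\Gamma,\Delta)$ in the shared language. The only cosmetic difference is your interpolant $\Diamond[j_1]C$ in the $r=1$ case, the mirror image of the paper's $\neg\Diamond[k]\neg C$; the underlying case analysis coincides with the paper's split into $\{r,s\}=\{1,2\}$, $\{r,s\}=\{1\}$, and $0\in\{r,s\}$.
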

\begin{proof} Assume the hypothesis, and assume, for \emph{reductio},
that \eqref{pair} is separable. Then, by compactness of stit logic
and the S5 properties of $\Box$, there must be $\Box A \in
\Gamma$, $\Box B \in \Delta$, and $C \in \mathcal{L}^{Ag(\Gamma)
\cup Ag(\Delta)}_{|\Gamma| \cap |\Delta |}$ such that both of the
following equations hold:
\begin{equation}\label{E:i1}
    \vdash (\Box A \wedge [j_1]A_1 \wedge\ldots\wedge [j_r]A_r) \to C,
\end{equation}
and
\begin{equation}\label{E:i2}
    \vdash (\Box B \wedge [i_1]B_1 \wedge\ldots\wedge [i_s]B_s) \to \neg
    C.
\end{equation}
%Note that, by S5 properties of cstit modalities, we must have
%$\vdash [k]\neg\bot$ for every $k \in \{ 1,\ldots, n \}$.
%Therefore, we may assume, wlog, that $Ag(\Gamma) = \{
%j_1,\ldots,j_r \}$ and $Ag(\Delta) = \{ i_1,\ldots,i_s \}$.

Since $Ag(\Gamma)\cap Ag(\Delta) = \emptyset$, all of the agent
indices in the united sequence $j_1,\ldots, j_r, i_1,\ldots, i_s$
must be pairwise different and we must have $r + s \leq n$.
Therefore, $r + s \in \{ 0,1,2,3 \}$ which gives us our three
cases below. Although these cases show many similarities, we
consider them separately. In every case we reason by
contraposition, showing that the separability of \eqref{pair}
(expressed by \eqref{E:i1} and \eqref{E:i2}) implies the
separability of $(\Gamma, \Delta)$, thus contradicting the initial
assumption of the lemma.

\emph{Case 1}. Let $\{ r,s \} = \{ 1,2\}$. Assume, wlog, that $r =
2$ and $s = 1$, the other subcase is symmetric. Then, by
\eqref{E:i1} and \eqref{E:i2}, there exist $i,j$ and $k$ such that
$\{ i,j,k \} = \{ 1,2,3\}$, and that both of the following hold:
\begin{equation}\label{E:i4}
    \vdash (\Box A \wedge [i]A_1 \wedge [j]A_2) \to C,
\end{equation}
and
\begin{equation}\label{E:i5}
    \vdash (\Box B \wedge [k]B_1) \to \neg C.
\end{equation}
By Lemma \ref{technical}.2, \eqref{E:i4}, and propositional logic,
we get that:
\begin{equation}\label{E:i6}
    \vdash (\Box A \wedge \Diamond[i]A_1 \wedge \Diamond[j]A_2) \to \neg\Diamond[k]\neg C,
\end{equation}
On the other hand, by Lemma \ref{technical}.3 and \eqref{E:i5}:
\begin{equation}\label{E:i7}
    \vdash (\Box B \wedge \Diamond[k]B_1) \to \Diamond[k]\neg C.
\end{equation}
Since $C$, by its choice, is in $\mathcal{L}^{Ag(\Gamma) \cup
Ag(\Delta)}_{|\Gamma| \cap |\Delta |}$, we clearly have
$\Diamond[k]\neg C \in \mathcal{L}^{Ag(\Gamma) \cup
Ag(\Delta)}_{|\Gamma| \cap |\Delta |}$, and we also have, by the
initial choice of our formulas, that $\Box A, \Diamond[i]A_1,
\Diamond[j]A_2 \in \Gamma$ and  $\Box B, \Diamond[k]B_1 \in
\Delta$. Therefore, it follows from \eqref{E:i6} and \eqref{E:i7},
that $(\Gamma, \Delta)$ is separable.

\emph{Case 2}. Let $\{ r,s \} = \{ 1 \}$. Then, by \eqref{E:i1}
and \eqref{E:i2}, there exist $i,j \in \{ 1,2,3 \}$ such that $i
\neq j$ and both of the following hold:
\begin{equation}\label{E:i'4}
    \vdash (\Box A \wedge [i]A_1) \to C,
\end{equation}
and
\begin{equation}\label{E:i'5}
    \vdash (\Box B \wedge [j]B_1) \to \neg C.
\end{equation}
By Lemma \ref{technical}.2 and \eqref{E:i'4} we get that:
\begin{equation}\label{E:i'6}
    \vdash (\Box A \wedge \Diamond[i]A_1) \to \neg\Diamond[j]\neg C,
\end{equation}
On the other hand, by Lemma \ref{technical}.3 and \eqref{E:i'5}:
\begin{equation}\label{E:i'7}
    \vdash (\Box B \wedge \Diamond[j]B_1) \to \Diamond[j]\neg C.
\end{equation}
Since $C$, by its choice, is in $\mathcal{L}^{Ag(\Gamma) \cup
Ag(\Delta)}_{|\Gamma| \cap |\Delta |}$, we clearly have
$\Diamond[j]\neg C \in \mathcal{L}^{Ag(\Gamma) \cup
Ag(\Delta)}_{|\Gamma| \cap |\Delta |}$, and we also have, by the
initial choice of our formulas, that $\Box A, \Diamond[i]A_1 \in
\Gamma$ and $\Box B, \Diamond[j]B_1 \in \Delta$. Therefore, it
follows from \eqref{E:i'6} and \eqref{E:i'7}, that $(\Gamma,
\Delta)$ is again separable, contrary to our assumptions.

\emph{Case 3}. $0 \in \{ r,s \}$. We may assume, wlog, that $s =
0$, the other subcase being symmetric. By \eqref{E:i2}, we must
have then:
\begin{equation}\label{E:i9}
    \vdash \Box B \to \neg C.
\end{equation}
By S5 properties of $\Box$, we get then:
\begin{align}
     &\vdash (\Box A \wedge \Diamond[j_1]A_1 \wedge\ldots \wedge \Diamond[j_r]A_r) \to
     \Diamond C\label{E:i10}&&\text{(from \eqref{E:i1})}\\
 &\vdash \Box B \to \Box\neg C\label{E:i11}&&\text{(from \eqref{E:i9})}
\end{align}
It follows then, by the choice of the formulas involved, that
$(\Gamma, \Delta)$ is separable, contrary to our assumptions.

This exhausts the list of possible cases and thus the Lemma is
proved.
\end{proof}

\begin{lemma}\label{inseparable3}
Let $V$ be a set of propositional variables, $Ag$ a finite agent
community, and let $\Gamma, \Delta \subseteq \mathcal{L}^{Ag}_V$
be such that $(\Gamma,\Delta)$ is inseparable. Moreover, assume
that $\Gamma$ is $(Ag(\Gamma), |\Gamma|)$-maxiconsistent and
$\Delta$ is $(Ag(\Delta), |\Delta|)$-maxiconsistent. Then:
\begin{enumerate}
\item If $\neg\Box A_1 \in \Gamma$, then the pair $(\Gamma^\Box
\cup \{ \neg A_1\}, \Delta^\Box)$ is inseparable.

\item If $\neg\Box B_1 \in \Delta$, then the pair $(\Gamma^\Box,
\Delta^\Box \cup \{ \neg B_1\})$ is inseparable.

\item If $\neg[j]A_1 \in \Gamma$, then the pair $(\Gamma^\Box \cup
\Gamma^{[j]} \cup \{ \neg A_1\}, \Delta^\Box)$ is inseparable.

\item If $\neg[i]B_1 \in \Delta$, then the pair $(\Gamma^\Box,
\Delta^\Box  \cup \Delta^{[i]} \cup \{ \neg B_1\})$ is
inseparable.
\end{enumerate}
\end{lemma}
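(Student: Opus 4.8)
The plan is to establish all four parts by \emph{reductio}: in each case we assume that the indicated pair is separable and derive that $(\Gamma,\Delta)$ itself is separable, contradicting the hypothesis. Since Part 2 is the mirror image of Part 1 and Part 4 the mirror image of Part 3 (swapping the roles of $\Gamma$ and $\Delta$), it suffices to treat Parts 1 and 3. The required agent-disjointness of the new pairs is in every case inherited from $(\Gamma,\Delta)$, so only the non-separating clause needs work. In both cases the first move is the same: a separating formula $C$ for the new pair lies, by construction, in $\mathcal{L}^{Ag(\Gamma)\cup Ag(\Delta)}_{|\Gamma|\cap|\Delta|}$, because passing to $\Gamma^\Box$, $\Gamma^{[j]}$, $\{\neg A_1\}$ and $\Delta^\Box$ can only shrink the sets of variables and agents involved. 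By compactness together with the S5 properties of $\Box$ and the maxiconsistency of $\Gamma,\Delta$, the finitely many boxed premises actually used can be replaced by a single conjunction $\Box A\in\Gamma$ (resp. $\Box B\in\Delta$); for Part 3 the same collecting argument, now using that $[j]$ is S5 (Lemma \ref{technical}.1), replaces the finitely many formulas from $\Gamma^{[j]}$ by a single $[j]A_2\in\Gamma$.

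For Part 1, separability yields $\vdash(\Box A\wedge\neg A_1)\to C$ and $\vdash\Box B\to\neg C$. The key modal step is the S5 transfer taking $\vdash(\Box A\wedge\neg A_1)\to C$ to $\vdash(\Box A\wedge\Diamond\neg A_1)\to\Diamond C$: rewriting the premise as $\vdash\neg A_1\to(\Box A\to C)$, applying $\Diamond$-monotonicity, distributing $\Diamond$ over the disjunction $\neg\Box A\vee C$, and using $\Diamond\neg\Box A\leftrightarrow\neg\Box A$ in S5. Dually, from $\vdash\Box B\to\neg C$ one gets $\vdash\Box B\to\Box\neg C$, i.e. $\vdash\Box B\to\neg\Diamond C$. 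Since $\Diamond\neg A_1=\neg\Box A_1\in\Gamma$ and $\Box A\in\Gamma$, we obtain $\Gamma\vdash\Diamond C$, while $\Box B\in\Delta$ gives $\Delta\vdash\neg\Diamond C$. As $\Diamond C$ adds neither an agent nor a variable, $\Diamond C\in\mathcal{L}^{Ag(\Gamma)\cup Ag(\Delta)}_{|\Gamma|\cap|\Delta|}$, so $(\Gamma,\Delta)$ is separable, the desired contradiction.

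For Part 3, separability yields $\vdash(\Box A\wedge[j]A_2\wedge\neg A_1)\to C$ and $\vdash\Box B\to\neg C$, and here $j\in Ag(\Gamma)$ so that $j\notin Ag(\Delta)$. The analogue of the transfer step takes $\vdash(\Box A\wedge[j]A_2\wedge\neg A_1)\to C$ to $\vdash(\Box A\wedge[j]A_2\wedge\langle j\rangle\neg A_1)\to\langle j\rangle C$. I would obtain it by applying $\langle j\rangle$-monotonicity to $\vdash\neg A_1\to((\Box A\wedge[j]A_2)\to C)$, distributing $\langle j\rangle$ over the resulting disjunction, and simplifying $[j](\Box A\wedge[j]A_2)\leftrightarrow[j]\Box A\wedge[j]A_2$ with the help of the S5 properties of $[j]$ and the auxiliary theorem $\vdash\Box A\to[j]\Box A$ already derived inside the proof of Lemma \ref{technical}.3. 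Dually, $[j]$-necessitation applied to $\vdash\Box B\to\neg C$ together with $\vdash\Box B\to[j]\Box B$ gives $\vdash\Box B\to[j]\neg C$, i.e. $\vdash\Box B\to\neg\langle j\rangle C$. Since $\langle j\rangle\neg A_1=\neg[j]A_1\in\Gamma$ and $\Box A,[j]A_2\in\Gamma$, we get $\Gamma\vdash\langle j\rangle C$ and $\Delta\vdash\neg\langle j\rangle C$; because $j\in Ag(\Gamma)$ the witness $\langle j\rangle C$ still lies in $\mathcal{L}^{Ag(\Gamma)\cup Ag(\Delta)}_{|\Gamma|\cap|\Delta|}$, again contradicting inseparability of $(\Gamma,\Delta)$.

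The routine parts, namely compactness, conjunction collecting, and the final language bookkeeping, are as in the classical case. The main obstacle I anticipate is the modal transfer in Part 3: it is crucial that the shared necessity $\Box A$ be pushed inside the agent modality via $\vdash\Box A\to[j]\Box A$ and that $[j]$ behave as a genuine S5 operator, so that $\langle j\rangle C$ comes out provable from $\Gamma$ while $[j]\neg C$ comes out provable from $\Delta$; keeping $A_2$ (the collected $[j]$-conjunct) on the $\Gamma$-side throughout, and checking that $j$ is a \emph{permitted} agent index for the interpolating language, is where the argument must be handled with care.
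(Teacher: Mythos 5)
Your proposal is correct and follows essentially the same route as the paper's own proof: reductio via compactness, maxiconsistency and S5 to collapse the relevant premises into single formulas $\Box A$, $[j]A'$, $\Box B$, then a modal transfer yielding the separating witness $\Diamond C$ for Part 1 and $\neg[j]\neg C$ (your $\langle j\rangle C$) for Part 3, with the crucial use of $\vdash \Box A \to [j]\Box A$ and the symmetry of Parts 2 and 4. The only difference is cosmetic: you derive the transfer steps through dual ($\Diamond$/$\langle j\rangle$) monotonicity and distribution, whereas the paper uses $[j]$-necessitation and contraposition of boxed implications, which are the same derivations written in dual form.
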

\begin{proof}
(Part 1).  Assume the hypothesis. If the pair $(\Gamma^\Box \cup
\{ \neg A_1\}, \Delta^\Box)$ is separable, then, by compactness of
stit logic, maxiconsistency of $\Gamma$ and $\Delta$, and S5
properties of all the modalities in stit logic, there must be
$\Box A \in \Gamma$, $\Box B \in \Delta$, and $C \in
\mathcal{L}^{Ag(\Gamma) \cup Ag(\Delta)}_{|\Gamma| \cap |\Delta
|}$ such that \eqref{E:i9} holds together with the following
equation:
\begin{equation}\label{E:j1}
    \vdash (\Box A \wedge \neg A_1) \to C.
\end{equation}
From \eqref{E:j1} we infer, using S5 properties of $\Box$:
\begin{equation}\label{E:j2}
    \vdash (\Box A \wedge \Diamond\neg A_1) \to \Diamond C.
\end{equation}
On the other hand, from \eqref{E:i9} we infer \eqref{E:i11}
arguing as in Case 3 in the proof of Lemma \ref{inseparable2}
above. Taken together, \eqref{E:i11} and \eqref{E:j2} show
separability of $(\Gamma, \Delta)$, contrary to our assumptions.
Therefore, \eqref{E:j1} and \eqref{E:i9} cannot hold, whence
$(\Gamma^\Box \cup \{ \neg A_1\}, \Delta^\Box)$  must be
inseparable, and we are done.

Part 2 is symmetric to Part 1.

(Part 3). Assume the hypothesis. If the pair $(\Gamma^\Box \cup
\Gamma^{[j]} \cup \{ \neg A_1\}, \Delta^\Box)$ is separable, then,
by compactness of stit logic, maxiconsistency of $\Gamma$ and
$\Delta$, and S5 properties of all the modalities in stit logic,
there must be $\Box A, [j]A' \in \Gamma$, $\Box B \in \Delta$, and
$C \in \mathcal{L}^{Ag(\Gamma) \cup Ag(\Delta)}_{|\Gamma| \cap
|\Delta |}$ such that \eqref{E:i9} holds together with the
following equation:
\begin{equation}\label{E:k1}
    \vdash (\Box A \wedge [j]A' \wedge \neg A_1) \to C
\end{equation}
Next we infer:
\begin{align}
&\vdash [j]((\Box A \wedge [j]A' \wedge \neg C) \to
A_1)\label{E:k3}&&\text{(by \eqref{E:k1}, $[j]$ is
S5)}\\
&\vdash ([j]\Box A \wedge [j]A' \wedge [j]\neg C) \to
[j]A_1\label{E:k3-1}&&\text{(by \eqref{E:k3}, $[j]$ is
S5)}\\
&\vdash \Box A \to [j]\Box A\label{E:k3-2}&&\text{(cf. \eqref{E:p3-6} above)}\\
&\vdash (\Box A \wedge [j]A' \wedge [j]\neg C) \to
[j]A_1\label{E:k3-3}&&\text{(by \eqref{E:k3-1} and
\eqref{E:k3-2})}\\
&\vdash (\Box A \wedge [j]A' \wedge \neg[j]A_1) \to \neg[j]\neg
C\label{E:k3-4}&&\text{(by \eqref{E:k3-3} and prop. logic)}
\end{align}
We also infer \eqref{E:i11} from \eqref{E:i9}, arguing as in Case
3 in the proof of Lemma \ref{inseparable2} above. From
\eqref{E:i11} and \eqref{A2} it then follows that:
\begin{equation}\label{E:k4}
    \vdash \Box B \to [j]\neg C
\end{equation}
Taken together, \eqref{E:k3-4} and \eqref{E:k4} show separability
of $(\Gamma, \Delta)$, contrary to our assumptions. Therefore,
\eqref{E:k1} and \eqref{E:i9} cannot hold together, whence
$(\Gamma^\Box \cup \Gamma^{[j]} \cup \{ \neg A_1\}, \Delta^\Box)$
must be inseparable, and we are done.

Part 4 is symmetric to Part 3.
\end{proof}

We are now prepared to prove Theorem \ref{positive}. Assume that
$n \leq 3$, assume for \emph{reductio}, that $A, B \in
\mathcal{L}^{\{ 1,\ldots, n \}}_{V}$, and we have $\vdash A \to
B$, $Ag(A) \cap Ag(B) = \emptyset$, but for no $C \in
\mathcal{L}^{Ag(A) \cup Ag(B)}_{|A| \cap |B|}$ we have both
$\vdash A \to C$ and $\vdash C \to B$. This means that the pair
$(\{ A \},\{ \neg B \})$ is inseparable and can be extended, using
Lemma \ref{inseparable1}, to an inseparable pair $(\Xi_0, \Xi_1)$
such that $\Xi_0$ is $(Ag(A), |A|)$-maxiconsistent and $\Xi_1$ is
$(Ag(B), |B|)$-maxiconsistent. We now define a $(Ag(A) \cup Ag(B),
|A| \cup |B|)$-stit model $\mathfrak{S}$ which we will show to
satisfy $\Xi_0 \cup \Xi_1$.

Now we start defining components of $\mathfrak{S} = \langle Tree,
\leq, Choice, V\rangle$:
\begin{itemize}
\item We first define the set of \emph{standard pairs} as the set
of all inseparable pairs $(\Gamma, \Delta)$ such that $\Gamma$ is
$(Ag(A), |A|)$-maxiconsistent, $\Delta$ is $(Ag(B),
|B|)$-maxiconsistent, and the following condition holds:
$$
\Xi^\Box_0 \subseteq \Gamma \& \Xi^\Box_1 \subseteq \Delta.
$$
The set of standard pairs is non-empty since $(\Xi_0, \Xi_1)$ is
clearly a standard pair.

\item We then define $Tree$ as the set of all standard pairs plus
 a single additional moment $\dag$.

\item $\leq$ is the reflexive closure of the relation $\{(\dag,
(\Gamma, \Delta))\mid(\Gamma, \Delta)\text{ is a standard pair }
\}$
\end{itemize}
Immediately we get the following lemma:
\begin{lemma}\label{boxed}
If $(\Gamma, \Delta)$ is a standard pair then $\Gamma^\Box =
\Xi^\Box_0$ and $\Delta^\Box = \Xi^\Box_1$.
\end{lemma}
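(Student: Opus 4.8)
The plan is to prove both set equalities by establishing two inclusions each; I will treat $\Gamma^\Box = \Xi^\Box_0$ in detail, the claim $\Delta^\Box = \Xi^\Box_1$ being completely symmetric. The inclusion $\Xi^\Box_0 \subseteq \Gamma^\Box$ requires no real work: by the definition of a standard pair we have $\Xi^\Box_0 \subseteq \Gamma$, and since every member of $\Xi^\Box_0$ is by definition a boxed formula, each of these formulas is a boxed formula lying in $\Gamma$, hence belongs to $\Gamma^\Box$.

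The substantive direction is $\Gamma^\Box \subseteq \Xi^\Box_0$, and here I would argue by contradiction. Suppose $\Box D \in \Gamma$ but $\Box D \notin \Xi_0$. Since $\Gamma$ is $(Ag(A),|A|)$-maxiconsistent we have $\Box D \in \mathcal{L}^{Ag(A)}_{|A|}$, so $D$ and all its boxings live in $\mathcal{L}^{Ag(A)}_{|A|}$, over which $\Xi_0$ also ranges. By maxiconsistency of $\Xi_0$ we then get $\neg\Box D \in \Xi_0$. The key move is to invoke the S5 character of $\Box$ recorded in \eqref{A1}: by negative introspection, $\neg\Box D$ is provably equivalent to $\Box\neg\Box D$, which is itself a boxed formula belonging to $\mathcal{L}^{Ag(A)}_{|A|}$. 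Since $\Xi_0$ is maxiconsistent and contains $\neg\Box D$, it must also contain $\Box\neg\Box D$, whence $\Box\neg\Box D \in \Xi^\Box_0$.

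I would then close the loop using the defining containment of a standard pair: $\Xi^\Box_0 \subseteq \Gamma$ yields $\Box\neg\Box D \in \Gamma$, and hence, by the T-axiom $\Box X \to X$, $\Gamma \vdash \neg\Box D$. But $\Box D \in \Gamma$ as well, so $\Gamma \vdash \Box D \wedge \neg\Box D$, contradicting the consistency of $\Gamma$ that follows from its maxiconsistency. Therefore $\Box D \in \Xi_0$, and being boxed it lies in $\Xi^\Box_0$; this establishes the remaining inclusion, and the symmetric argument for $\Delta$ completes the proof.

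The main obstacle---really the only non-routine point---is the transfer step in the second paragraph: one must convert the \emph{non}-membership $\Box D \notin \Xi_0$ into a positive, \emph{boxed} assertion that can be pushed back into $\Gamma$ through the one-directional containment $\Xi^\Box_0 \subseteq \Gamma$. This is exactly what S5 negative introspection supplies, by re-expressing $\neg\Box D$ as the boxed formula $\Box\neg\Box D$; without the characteristic S5 principle the argument would not go through.
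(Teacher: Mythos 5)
Your proof is correct and follows essentially the same route as the paper's: the easy inclusion $\Xi^\Box_0 \subseteq \Gamma^\Box$ from the definition of a standard pair, and for the converse the same key move of using S5 negative introspection to turn $\neg\Box D \in \Xi_0$ into the boxed formula $\Box\neg\Box D \in \Xi^\Box_0$, which is then pushed into $\Gamma$ and clashes (via the T-axiom) with $\Box D \in \Gamma$ and $\Gamma$'s consistency. The only difference is cosmetic: the paper phrases the argument as a case distinction on $\Box D \in \Xi_0$ versus $\neg\Box D \in \Xi_0$, while you run it as a reductio.
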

\begin{proof}
We show that $\Gamma^\Box = \Xi^\Box_0$, the other part is
similar. We have $\Xi^\Box_0 \subseteq \Gamma$ by the definition
of standard pair, whence clearly $\Xi^\Box_0 \subseteq
\Gamma^\Box$. In the other direction, assume that $\Box C \in
\Gamma$. Since $\Xi_0$ is $(Ag(A), |A|)$-maxiconsistent, we must
have either $\Box C \in \Xi_0$ or $\neg\Box C \in \Xi_0$. In the
latter case, by S5 properties of $\Box$ and $(Ag(A),
|A|)$-maxiconsistency of $\Xi_0$ we get that $\Box\neg\Box C \in
\Xi_0$. We have established, therefore, that either $\Box C \in
\Xi^\Box_0$ or $\Box\neg\Box C \in \Xi^\Box_0$. However, we cannot
have $\Box\neg\Box C \in \Xi^\Box_0$, since we know that
$\Xi^\Box_0 \subseteq \Gamma$, and also $\Box C \in \Gamma$. It
follows that we must have $\Box C \in \Xi^\Box_0$.
\end{proof}
We pause to reflect on the structure of histories induced by the
pair $(Tree, \leq)$. Every such history has the form $h_{(\Gamma,
\Delta)} = \{\dag, (\Gamma, \Delta) \}$. It is clear, moreover,
that we have both $H^\mathfrak{S}_\dag = Hist(\mathfrak{S})$ and
$H_{(\Gamma, \Delta)} = \{ h_{(\Gamma, \Delta)}\}$ for every
standard pair $(\Gamma, \Delta)$. We then define the choice
function for our model in the following way:
\begin{itemize}
\item For every $j \in Ag(A)$ and standard pairs $(\Gamma,
\Delta)$ and $(\Gamma_0, \Delta_0)$, we define that $h_{(\Gamma_0,
\Delta_0)} \in Choice^\dag_j(h_{(\Gamma, \Delta)})$ iff
$\Gamma^{[j]} \subseteq \Gamma_0$.

\item Similarly, for every $i \in Ag(B)$ and standard pairs
$(\Gamma, \Delta)$ and $(\Gamma_0, \Delta_0)$, we define that
$h_{(\Gamma_0, \Delta_0)} \in Choice^\dag_j(h_{(\Gamma, \Delta)})$
iff $\Delta^{[i]} \subseteq \Delta_0$.

\item For every $j \in Ag(A) \cup Ag(B)$ and every standard pair
$(\Gamma, \Delta)$ we set that $Choice^{(\Gamma, \Delta)}_j = \{
H_{(\Gamma, \Delta)}\} = \{\{ h_{(\Gamma, \Delta)}\}\}$.

\item Finally, for a $p \in |A|$, we define that $V(p) = \{ (\dag,
(\Gamma, \Delta))\mid p \in \Gamma \}$; symmetrically, for a $q
\in |B|$, we define that $V(q) = \{ (\dag, (\Gamma, \Delta))\mid q
\in \Delta \}$.
\end{itemize}

First of all, we need to show that we have in fact defined a stit
model:
\begin{lemma}\label{model}
The structure $\mathfrak{S} = \langle Tree, \leq, Choice,
V\rangle$, as defined above, is a $(Ag(A) \cup Ag(B), |A| \cup
|B|)$-stit model.
\end{lemma}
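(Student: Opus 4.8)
The plan is to verify, one at a time, the four structural requirements on $\langle Tree,\leq,Choice,V\rangle$ together with the four frame constraints (HC), (NBB), (NCUH), (IA). Most of these fall out of the shape of the frame. The set $Tree$ is nonempty since it contains $\dag$, and $\leq$ is a partial order of height one with $\dag$ as its root; hence (HC) holds by taking $\dag$ as a common predecessor, and (NBB) holds because the only moments below a standard pair $(\Gamma,\Delta)$ are $\dag$ and $(\Gamma,\Delta)$ itself, which are comparable (and $\dag$ has only itself below it). The clauses for $V$ send each atom to a subset of $MH(\mathfrak{S})$, so $V$ is well defined. The substantive work concerns $Choice$ and, above all, (IA).

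Next I would check that each $Choice^m_j$ is genuinely a partition of $H^\mathfrak{S}_m$. At a maximal moment $(\Gamma,\Delta)$ the set $H^\mathfrak{S}_{(\Gamma,\Delta)}$ is the singleton $\{h_{(\Gamma,\Delta)}\}$ and $Choice^{(\Gamma,\Delta)}_j$ is its trivial one-cell partition, so nothing is required. At $\dag$ the cells of an agent $j\in Ag(A)$ are the classes of the relation $\Gamma^{[j]}\subseteq\Gamma_0$ on standard pairs (and symmetrically $\Delta^{[i]}\subseteq\Delta_0$ for $i\in Ag(B)$), and I would show this relation is an equivalence: reflexivity and transitivity are immediate from $\Gamma^{[j]}\subseteq\Gamma$ and from $[j]C\in\Gamma^{[j]}\Rightarrow[j]C\in\Gamma_0$, while symmetry, the one nontrivial point, follows from negative introspection for $[j]$ (part of \eqref{A1}) and the maxiconsistency of the components, exactly as in the standard S5 canonical-model argument. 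For (NCUH), observe that at $\dag$ two histories $h_{(\Gamma_0,\Delta_0)}$ and $h_{(\Gamma_1,\Delta_1)}$ can be undivided only if they share a moment strictly above $\dag$, which forces $(\Gamma_0,\Delta_0)=(\Gamma_1,\Delta_1)$; so undivided histories coincide and (NCUH) is vacuous (it is likewise vacuous at the maximal moments, whose history sets are singletons).

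The heart of the proof is (IA) at $\dag$; this is where I expect the main obstacle and where the hypothesis $n\leq 3$ enters. Given a choice function selecting, for each $j\in Ag(A)$, a cell with data $\Gamma_j^{[j]}$, and for each $i\in Ag(B)$ a cell with data $\Theta_i^{[i]}$, I must produce one standard pair $(\Gamma_0,\Delta_0)$ lying in every chosen cell, i.e. with $\Gamma_j^{[j]}\subseteq\Gamma_0$ and $\Theta_i^{[i]}\subseteq\Delta_0$ throughout. The plan is to set $\Gamma^\dagger:=\Xi^\Box_0\cup\bigcup_{j\in Ag(A)}\Gamma_j^{[j]}$ and $\Delta^\dagger:=\Xi^\Box_1\cup\bigcup_{i\in Ag(B)}\Theta_i^{[i]}$ (padded with suitable theorems so that $\Gamma^\dagger$ and $\Delta^\dagger$ exhaust the vocabularies $(Ag(A),|A|)$ and $(Ag(B),|B|)$, which is harmless for separability), to prove $(\Gamma^\dagger,\Delta^\dagger)$ inseparable, and then to extend it by Lemma \ref{inseparable1} to a maxiconsistent inseparable pair, which by construction contains $\Xi^\Box_0,\Xi^\Box_1$ together with all the cell data and is therefore the desired standard pair.

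To prove $(\Gamma^\dagger,\Delta^\dagger)$ inseparable I would argue by reductio. If it were separable, then by Lemma \ref{L:separability} and the normality of $\Box$ and of each $[j]$ (so that conjunctions of boxed, resp. $[j]$-prefixed, members of a maxiconsistent set are again of that form and remain in the set), the separation reduces to a single $\Box A\in\Xi^\Box_0$, one $[j]A_j\in\Gamma_j$ per $j\in Ag(A)$, a single $\Box B\in\Xi^\Box_1$, one $[i]B_i\in\Theta_i$ per $i\in Ag(B)$, and an interpolant $C$ with $\vdash(\Box A\wedge\bigwedge_j[j]A_j)\to C$ and $\vdash(\Box B\wedge\bigwedge_i[i]B_i)\to\neg C$. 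The key move is that $[j]A_j\in\Gamma_j$ gives $\Diamond[j]A_j\in\Gamma_j$ and then, by negative introspection, $\Box\Diamond[j]A_j\in\Gamma_j$, so $\Box\Diamond[j]A_j\in\Gamma_j^\Box=\Xi^\Box_0$ by Lemma \ref{boxed}; hence $\Diamond[j]A_j\in\Xi_0$, and symmetrically $\Diamond[i]B_i\in\Xi_1$. Since $Ag(A)$ and $Ag(B)$ are disjoint with $|Ag(A)|+|Ag(B)|\leq n\leq 3$, these diamond-formulas and the two implications are exactly the data fed into the proof of Lemma \ref{inseparable2} with $\Gamma=\Xi_0$ and $\Delta=\Xi_1$; running that argument separates $(\Xi_0,\Xi_1)$, contradicting its inseparability. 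The one point demanding care is the vocabulary bookkeeping: since every standard pair draws its left component from $\mathcal{L}^{Ag(A)}_{|A|}$ and its right component from $\mathcal{L}^{Ag(B)}_{|B|}$, the interpolant $C$ uses only variables of $|A|\cap|B|$, which is precisely the shared vocabulary of $(\Xi_0,\Xi_1)$, so the separation obtained for $(\Xi_0,\Xi_1)$ is legitimate.
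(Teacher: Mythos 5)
Your proposal is correct and follows essentially the same route as the paper's proof: the heart in both cases is the verification of \eqref{IA} at $\dag$, carried out by forming the pair $(\Xi^\Box_0 \cup \bigcup_{j}\Gamma^{[j]}_j,\ \Xi^\Box_1 \cup \bigcup_{i}\Delta^{[i]}_i)$, proving it inseparable by reductio (using compactness and normality to reduce to single boxed and $[j]$-prefixed formulas, transferring the resulting $\Diamond[j]$-formulas into $\Xi_0$ and $\Xi_1$ via Lemma \ref{boxed}, and then deriving separability of $(\Xi_0,\Xi_1)$ through Lemma \ref{inseparable2}, which is indeed where $n\leq 3$ enters), and finally extending the pair by Lemma \ref{inseparable1}.1 to a standard pair lying in every chosen cell. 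The remaining differences are cosmetic: you spell out the equivalence-relation argument for the partition at $\dag$ (which the paper attributes without detail to S5 properties of $[j]$ and the disjointness of $Ag(A)$ and $Ag(B)$), you reach $\Diamond[j]A_j \in \Xi_0$ positively via $\Box\Diamond[j]A_j \in \Gamma^\Box_j$ where the paper argues negatively via $\Box\neg[j]\alpha_j \notin \Gamma_j$, and your vocabulary ``padding'' is in fact automatic, since the maxiconsistent $\Xi_0$ already places boxed theorems in every variable of $|A|$ and every agent of $Ag(A)$ into $\Xi^\Box_0$.
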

\begin{proof} It is obvious that $\leq$ is a forward-branching
preorder on the non-empty set $Tree$. The fact that $Choice^m_j$
is a partition of $H^\mathfrak{S}_m$ trivially follows from
definition, whenever $m \neq \dag$. If, on the other hand, $m =
\dag$, then this same fact follows from S5 properties of $[j]$
together with the fact that, for every standard pair $(\Gamma,
\Delta)$, $Ag(\Gamma) \cap Ag(\Delta) = Ag(A) \cap Ag(B) =
\emptyset$.

As for the constraints, \eqref{HC} is satisfied since $\dag$ is
the $\leq$-least moment in $Tree$ and \eqref{NCUH} is satisfied
because there are no undivided histories in $\mathfrak{S}$. We
consider \eqref{IA}. Let $m \in Tree$ and let $f$ be a function on
$Ag$ such that $(\forall j \in Ag(A) \cup Ag(B))(f(j) \in
Choice^m_j)$. We are going to show that in this case $\bigcap_{j
\in Ag(A) \cup Ag(B)}f(j) \neq \emptyset$. If $m \neq \dag$, then
this is obvious, since every agent will have a vacuous choice. We
treat the case when $m = \dag$.

%First, we represent $Ag(A)$ as a set $\{ j_1,\ldots, j_n\}$ where
%$j_1,\ldots, j_n$ are pairwise different. Similarly, we represent
%$Ag(B)$ as a set $\{ i_1,\ldots, i_s\}$ for some pairwise
%different $i_1,\ldots, i_s$.
Then, for every $j \in Ag(A) \cup
Ag(B)$, we pick an $h_j \in f(j)$ so that $f(j) =
Choice^\dag_j(h_j)$. Since $Hist(Tree, \leq) = \{h_{(\Gamma,
\Delta)}\mid (\Gamma, \Delta)\text{ is a standard pair}\}$, we can
choose, for every $j \in Ag(A) \cup Ag(B)$, a standard pair
$(\Gamma_j, \Delta_j)$ such that $h_j = h_{(\Gamma_j, \Delta_j)}$.
Together with $f(j) = Choice^\dag_j(h_j)$, this gives us the
following equation:
\begin{equation}\label{E:equ}
    (\forall j \in Ag(A) \cup Ag(B))(f(j) = Choice^\dag_j(h_{(\Gamma_j, \Delta_j)})
\end{equation}
Now consider the pair:
\begin{equation}\label{E:ind-pair1}
    (\Xi^\Box_0 \cup \bigcup\{ \Gamma^{[j]}_j \mid j \in Ag(A)\}, \Xi^\Box_1 \cup \bigcup\{ \Delta^{[i]}_i \mid i \in Ag(B)\}
\end{equation}
We will show that the pair \eqref{E:ind-pair1} is inseparable.
Indeed, suppose otherwise. Then, by Lemma \ref{L:separability},
there must be $\Box A_1,\ldots,\Box A_r \in \Xi^\Box_0$, $\Box
B_1,\ldots,\Box B_{r'} \in \Xi^\Box_1$,
$[j]A^j_1,\ldots,[j]A^j_{r(j)} \in \Gamma_j$ (for every $j \in
Ag(A)$), and $[i]B^i_1,\ldots,[i]B^i_{r(i)} \in \Delta_i$ (for
every $i \in Ag(B)$) such that the pair:
\begin{align}
    (\{ \Box A_1,\ldots,\Box A_r \} \cup \bigcup&\{ \{ [j]A^j_1,\ldots,[j]A^j_{r(j)} \} \mid j \in
    Ag(A)\},\notag\\
     &\{\Box
B_1,\ldots,\Box B_{r'}\}\cup \bigcup\{ \{
[i]B^i_1,\ldots,[i]B^i_{r(i)})\} \mid i \in
Ag(B)\})\label{E:ind-pair2}
\end{align}
is separable. Now the contraposition of Lemma \ref{inseparable1}.2
entails that in this case also the pair:
\begin{align}
    (\Xi^\Box_0 \cup \bigcup&\{ \{ [j]A^j_1,\ldots,[j]A^j_{r(j)} \} \mid j \in
    Ag(A)\},\notag\\
     &\Xi^\Box_1\cup \bigcup\{ \{
[i]B^i_1,\ldots,[i]B^i_{r(i)}\} \mid i \in
Ag(B)\})\label{E:ind-pair3}
\end{align}
must be separable. Next, for every $j \in Ag(A)$ and every $i \in
Ag(B)$, we set:
$$
\alpha_j := A^j_1\wedge\ldots\wedge
A^j_{r(j)};\qquad\qquad\qquad\qquad \beta_i :=
B^i_1\wedge\ldots\wedge B^i_{r(i)}.
$$
By Lemma \ref{technical}.1 and the separability of the pair
\eqref{E:ind-pair3}, we know that also the following pair must be
separable:
\begin{equation}\label{E:ind-pair4}
    (\Xi^\Box_0 \cup \{ [j]\alpha_j \mid j \in Ag(A)\}, \Xi^\Box_1 \cup \{
[i]\beta_i \mid i \in Ag(B)\}.
\end{equation}
For every $j \in Ag(A)$, the formulas
$[j]A^j_1,\ldots,[j]A^j_{r(j)}$ were chosen in $\Gamma_j$,
therefore, it follows from Lemma \ref{technical}.1 and
maxiconsistency of $\Gamma_j$ that also $[j]\alpha_j \in
\Gamma_j$. By S5 properties of $\Box$, this means that also
$\Diamond[j]\alpha_j \in \Gamma_j$ so that, by consistency,
$\Box\neg[j]\alpha_j \notin \Gamma_j$. The latter means, by Lemma
\ref{boxed}, that $\Box\neg[j]\alpha_j \notin \Xi_0$, therefore,
by maxiconsistency, $\Diamond[j]\alpha_j \in \Xi_0$. By a parallel
argument, one can also show that, for every $i \in Ag(B)$,
$\Diamond[i]\beta_i \in \Xi_1$. Therefore, by Lemma
\ref{inseparable2}, the separability of the pair
\eqref{E:ind-pair4} entails the separability of $(\Xi_0, \Xi_1)$
which contradicts the choice of the latter pair. The obtained
contradiction shows that the pair \eqref{E:ind-pair1} must be
inseparable.

Therefore, by Lemma \ref{inseparable1}.1, the pair
\eqref{E:ind-pair1} can be extended to a pair $(\Gamma_0,
\Delta_0)$ such that $\Gamma_0$ is $(Ag(A), |A|)$-maxiconsistent
and $\Delta_0$ is $(Ag(B), |B|)$-maxiconsistent. By the choice of
\eqref{E:ind-pair1}, it is also clear that both $\Xi^\Box_0
\subseteq \Gamma_0$ and $\Xi^\Box_1 \subseteq \Delta_0$, which
means that $(\Gamma_0, \Delta_0)$ is a standard pair. Therefore,
we must have $h_{(\Gamma_0, \Delta_0)} \in H^\mathfrak{S}_\dag$.
Now, let $j \in Ag(A)$. Then, by the choice of
\eqref{E:ind-pair1}, $\Gamma^{[j]}_j \subseteq \Gamma_0$, whence
we get, by \eqref{E:equ} and the definition of $Choice$, that
$h_{(\Gamma_0, \Delta_0)} \in Choice^\dag_j(h_{(\Gamma_j,
\Delta_j)}) = f(j)$. Similarly, if $i \in Ag(B)$, then, by the
choice of \eqref{E:ind-pair1}, $\Delta^{[i]}_i \subseteq
\Delta_0$, whence we get, by \eqref{E:equ} and the definition of
$Choice$, that $h_{(\Gamma_0, \Delta_0)} \in
Choice^\dag_i(h_{(\Gamma_i, \Delta_i)}) = f(i)$. Summing up, we
obtain that:
$$
h_{(\Gamma_0, \Delta_0)} \in \bigcap_{j \in Ag(A) \cup Ag(B)}f(j)
\neq \emptyset,
$$
and \eqref{IA} is thus satisfied.
\end{proof}

For the defined model $\mathfrak{S}$, we show the following truth
lemma:
\begin{lemma}\label{truth}
Let $\mathfrak{S}$ be as defined above, let $(\Gamma, \Delta)$ be
a standard pair, let $C \in \mathcal{L}^{Ag(A)}_{|\Gamma|}$, and
let $D \in \mathcal{L}^{Ag(B)}_{|\Delta|}$. Then:
\begin{enumerate}
\item $\mathfrak{S}, \dag, h_{(\Gamma, \Delta)} \models C
\Leftrightarrow C \in \Gamma$;

\item $\mathfrak{S}, \dag, h_{(\Gamma, \Delta)} \models D
\Leftrightarrow D \in \Delta$.
\end{enumerate}
\end{lemma}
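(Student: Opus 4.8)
The plan is to prove both parts simultaneously by induction on the complexity of the formula $C$ (respectively $D$), exploiting the symmetry between the two parts so that a detailed argument for Part~1 suffices. The base case and the Boolean cases are routine: for a propositional variable $p \in |\Gamma| = |A|$, the equivalence $\mathfrak{S}, \dag, h_{(\Gamma,\Delta)} \models p \Leftrightarrow p \in \Gamma$ holds by the definition of the valuation $V$; the cases for $\bot$ and $\to$ follow from the maxiconsistency of $\Gamma$, which guarantees that $\Gamma$ behaves classically with respect to the Boolean connectives. The interesting work lies in the two modal cases, $C = \Box C_1$ and $C = [j]C_1$ for $j \in Ag(A)$.

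Consider first $C = \Box C_1$. The right-to-left direction uses that $\Box C_1 \in \Gamma$ together with Lemma~\ref{boxed} (giving $\Gamma^\Box = \Xi^\Box_0$) to conclude that $\Box C_1$, hence $C_1$, lies in \emph{every} standard pair's first component, so by the induction hypothesis $C_1$ is true at every history $h_{(\Gamma_0,\Delta_0)}$; since all histories pass through $\dag$ and $H^\mathfrak{S}_\dag = Hist(\mathfrak{S})$, this yields $\mathfrak{S}, \dag, h_{(\Gamma,\Delta)} \models \Box C_1$. For the left-to-right direction I would argue contrapositively: if $\Box C_1 \notin \Gamma$, then by maxiconsistency $\neg\Box C_1 \in \Gamma$, and Lemma~\ref{inseparable3}.1 shows that $(\Gamma^\Box \cup \{\neg C_1\}, \Delta^\Box)$ is inseparable. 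Using Lemma~\ref{inseparable1}.1 to extend this to a maxiconsistent inseparable pair $(\Gamma_0,\Delta_0)$, which is automatically a standard pair (since $\Xi^\Box_0 = \Gamma^\Box \subseteq \Gamma_0$ by Lemma~\ref{boxed}, and symmetrically on the right), I obtain a history where $\neg C_1 \in \Gamma_0$; the induction hypothesis then gives a history falsifying $C_1$, so $\Box C_1$ fails at $h_{(\Gamma,\Delta)}$.

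The case $C = [j]C_1$ with $j \in Ag(A)$ is analogous but uses the choice structure at $\dag$. For right-to-left, if $[j]C_1 \in \Gamma$ then $[j]C_1 \in \Gamma^{[j]}$, so every standard pair $(\Gamma_0,\Delta_0)$ with $h_{(\Gamma_0,\Delta_0)} \in Choice^\dag_j(h_{(\Gamma,\Delta)})$ satisfies $\Gamma^{[j]} \subseteq \Gamma_0$ by definition of $Choice$, whence $[j]C_1 \in \Gamma_0$ and (by the S5 axiom \eqref{A1}) $C_1 \in \Gamma_0$; the induction hypothesis finishes this direction. For left-to-right I again contrapose: if $[j]C_1 \notin \Gamma$, then $\neg[j]C_1 \in \Gamma$, and Lemma~\ref{inseparable3}.3 shows $(\Gamma^\Box \cup \Gamma^{[j]} \cup \{\neg C_1\}, \Delta^\Box)$ is inseparable; extending to a standard pair $(\Gamma_0,\Delta_0)$ via Lemma~\ref{inseparable1}.1, I get $\Gamma^{[j]} \subseteq \Gamma_0$ (so $h_{(\Gamma_0,\Delta_0)}$ is in the $j$-choice cell of $h_{(\Gamma,\Delta)}$) and $\neg C_1 \in \Gamma_0$, giving a history in the cell where $C_1$ fails.

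The main obstacle, and the point requiring the most care, is verifying that the pairs produced by Lemma~\ref{inseparable3} genuinely extend to \emph{standard} pairs rather than merely inseparable maxiconsistent ones: one must check that the boxed-formula condition $\Xi^\Box_0 \subseteq \Gamma_0$ and $\Xi^\Box_1 \subseteq \Delta_0$ survives the extension. This is exactly where Lemma~\ref{boxed} does the essential work, since it collapses $\Gamma^\Box$ to $\Xi^\Box_0$ and thereby guarantees that any maxiconsistent inseparable extension of a pair containing $\Gamma^\Box$ automatically contains $\Xi^\Box_0$; the symmetric bookkeeping on the $\Delta$-side must be tracked in parallel. A secondary subtlety is the restriction $C \in \mathcal{L}^{Ag(A)}_{|\Gamma|}$ in the statement, which ensures that every subformula arising in the induction stays within the agent/variable signature on which the relevant half of the pair is maxiconsistent, so that maxiconsistency can legitimately be invoked at each step.
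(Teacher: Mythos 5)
Your proposal is correct and follows essentially the same route as the paper's own proof: induction on the formula, with the $(\Leftarrow)$ directions of the modal cases handled via Lemma \ref{boxed} and the definition of $Choice$, and the $(\Rightarrow)$ directions by contraposition through Lemma \ref{inseparable3}.1/\ref{inseparable3}.3 followed by an extension to a standard pair via Lemma \ref{inseparable1}.1. Your closing observation---that Lemma \ref{boxed} is what guarantees the extended pair is genuinely \emph{standard}---is exactly the step the paper makes explicit by rewriting $(\Gamma^\Box \cup \{\neg D\}, \Delta^\Box)$ as $(\Xi_0^\Box \cup \{\neg D\}, \Xi_1^\Box)$ before extending.
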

\begin{proof}
We show Part 1, the other part is similar. The proof proceeds by
induction on the construction of $C$.

\emph{Basis}. $C = p \in |\Gamma|$. Then:
$$
\mathfrak{S}, \dag, h_{(\Gamma, \Delta)} \models p \Leftrightarrow
(\dag, h_{(\Gamma, \Delta)}) \in V(p) \Leftrightarrow p \in
\Gamma,
$$
by the definition of $V$ above.

\emph{Induction step}. The Boolean cases are strightforward. We
treat the modal cases:

\emph{Case 1}. $C = \Box D$. ($\Leftarrow$) Assume that $\Box D
\in \Gamma$ and take an arbitrary $g \in H^\mathfrak{S}_\dag$. We
will show that $\mathfrak{S}, \dag, g  \models D$. Indeed, we must
have $g = h_{(\Gamma_0, \Delta_0)}$ for an appropriate standard
pair $(\Gamma_0, \Delta_0)$. By Lemma \ref{boxed}, we must have
$\Gamma^\Box = \Xi^\Box_0 = \Gamma^\Box_0$, whence it follows that
$\Box D \in \Gamma_0$. By S5 properties of $\Box$ and $(Ag(A),
|A|)$-maxiconsistency of $\Gamma_0$, it follows further that $D
\in \Gamma_0$, whence $\mathfrak{S}, \dag, g (= h_{(\Gamma_0,
\Delta_0)}) \models D$ by induction hypothesis. Since $g$ was
chosen in $H^\mathfrak{S}_\dag$ arbitrarily, it follows that
$\mathfrak{S}, \dag, h_{(\Gamma, \Delta)} \models \Box D$.

($\Rightarrow$). Assume that $\Box D \notin \Gamma$. By $(Ag(A),
|A|)$-maxiconsistency of $\Gamma$, we must have then that
$\neg\Box D \in \Gamma$, which, by Lemma \ref{inseparable3}.1,
means that the pair $(\Gamma^\Box \cup \{ \neg D\}, \Delta^\Box)$
must be inseparable. By Lemma \ref{boxed}, we know that also the
pair $(\Xi_0^\Box \cup \{ \neg D\}, \Xi_1^\Box)$ must be
inseparable. We then extend the latter pair, using Lemma
\ref{inseparable1}.1, to a standard pair $(\Gamma_0, \Delta_0)$.
It is clear that $D \notin \Gamma_0$, hence, by induction
hypothesis, $\mathfrak{S}, \dag, h_{(\Gamma_0, \Delta_0)}
\not\models D$. Since $h_{(\Gamma_0, \Delta_0)} \in
H^\mathfrak{S}_\dag$, this further means that $\mathfrak{S}, \dag,
h_{(\Gamma, \Delta)} \not\models \Box D$, as desired.

\emph{Case 2}. $C = [j]D$ for some $j \in Ag(A)$. ($\Leftarrow$)
Assume that $[j]D \in \Gamma$ and take an arbitrary $g \in
Choice^\dag_j(h_{(\Gamma, \Delta)})$. We will show that
$\mathfrak{S}, \dag, g  \models D$. Indeed, we must have $g =
h_{(\Gamma_0, \Delta_0)}$ for an appropriate standard pair
$(\Gamma_0, \Delta_0)$. Given that $h_{(\Gamma_0, \Delta_0)} = g
\in Choice^\dag_j(h_{(\Gamma, \Delta)})$, we must also have, by
the definition of $Choice$, that $\Gamma^{[j]}\subseteq \Gamma_0$.
Therefore, $[j]D \in \Gamma_0$, and it follows by S5 properties of
$[j]$ and $(Ag(A), |A|)$-maxiconsistency of $\Gamma_0$, that also
$D \in \Gamma_0$ whence $\mathfrak{S}, \dag, g (= h_{(\Gamma_0,
\Delta_0)}) \models D$ by the induction hypothesis. Since $g$ was
chosen in $Choice^\dag_j(h_{(\Gamma, \Delta)})$ arbitrarily, we
have shown that $\mathfrak{S}, \dag, h_{(\Gamma, \Delta)} \models
[j]D$.

($\Rightarrow$). Assume that $[j]D \notin \Gamma$. By $(Ag(A),
|A|)$-maxiconsistency of $\Gamma$, we must have then that
$\neg[j]D \in \Gamma$, which, by Lemma \ref{inseparable3}.3, means
that the pair $(\Gamma^\Box \cup \Gamma^{[j]}\cup \{ \neg D\},
\Delta^\Box)$ must be inseparable. By Lemma \ref{boxed}, we know
that also the pair $(\Xi_0^\Box \cup \Gamma^{[j]} \cup \{ \neg
D\}, \Xi_1^\Box)$ must be inseparable. We then extend the latter
pair, using Lemma \ref{inseparable1}.1, to a standard pair
$(\Gamma_0, \Delta_0)$. It is clear that $D \notin \Gamma_0$,
hence, by induction hypothesis, $\mathfrak{S}, \dag, h_{(\Gamma_0,
\Delta_0)} \not\models D$. We also clearly have
$\Gamma^{[j]}\subseteq \Gamma_0$, which means that $h_{(\Gamma_0,
\Delta_0)} \in Choice^\dag_j(h_{(\Gamma, \Delta)})$. Therefore, we
get that $\mathfrak{S}, \dag, h_{(\Gamma, \Delta)} \not\models
[j]D$, as desired.
\end{proof}

We can now finish our proof of Theorem \ref{positive} by recalling
the fact that we have, according to the above assumption, both $A
\in \Xi_0$ and $\neg B \in \Xi_1$, so that it follows from Lemma
\ref{truth}, that:
$$
\mathfrak{S}, \dag, h_{(\Xi_0, \Xi_1)} \models A \wedge \neg B.
$$
The latter is in contradiction with the assumption that $\vdash A
\to B$, and this contradiction means that there must be an
interpolant for this implication.

\section{The case $n > 3$}\label{S:negative}
The main result of this section looks as follows:
\begin{theorem}\label{negative}
For every $n > 3$, stit logic does not have $(RCIP)_n$.
\end{theorem}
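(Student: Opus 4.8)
The plan is to prove failure of $(RCIP)_n$ by exhibiting, for any $n>3$, an explicit provable implication $A\to B$ with $Ag(A)\cap Ag(B)=\emptyset$ that admits no interpolant. It suffices to work with four of the agents, say $1,2,3,4$: I take $A$ in agents $\{1,2\}$ and $B$ in agents $\{3,4\}$, so that $Ag(A)\cup Ag(B)=\{1,2,3,4\}$ and the interpolant language $\mathcal{L}^{\{1,2,3,4\}}_{|A|\cap|B|}$ is the same for every $n\geq 4$; the remaining agents $5,\ldots,n$ never occur in $A$, $B$, nor are they available to an interpolant, and the relevant instance of \eqref{A3} uses only $1,2,3,4$, so everything transfers from the four-agent case (padding models with single-cell choices for the surplus agents preserves \eqref{IA}).

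The formulas I would use employ one \emph{shared} variable $p$ and two \emph{private} variables $a,b$: set $A:=\Diamond[1]a\wedge\Diamond[2](a\to p)$ and $B:=\neg(\Diamond[3]b\wedge\Diamond[4](b\to\neg p))$. Then $Ag(A)=\{1,2\}$, $Ag(B)=\{3,4\}$, and $|A|\cap|B|=\{p\}$. Provability of $A\to B$ is immediate from the four-agent instance of \eqref{A3} with $A_1=a$, $A_2=(a\to p)$, $A_3=b$, $A_4=(b\to\neg p)$: since $[1]a\wedge[2](a\to p)\wedge[3]b\wedge[4](b\to\neg p)$ entails $a\wedge(a\to p)\wedge b\wedge(b\to\neg p)$, hence $p\wedge\neg p$, the consequent $\Diamond(\ldots)$ collapses to $\bot$, giving
\[
\vdash(\Diamond[1]a\wedge\Diamond[2](a\to p)\wedge\Diamond[3]b\wedge\Diamond[4](b\to\neg p))\to\bot,
\]
which rearranges to $\vdash A\to B$.

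To refute interpolation I would build two stit models whose evaluation points satisfy exactly the same formulas of $\mathcal{L}^{\{1,2,3,4\}}_{\{p\}}$, yet one satisfies $A$ and the other $\neg B$. I realize both as a root $\dag$ below $16$ incomparable leaves indexed by $x\in\{0,1\}^4$, with $Choice^\dag_j$ partitioning the histories according to the coordinate $x_j$ (a full product, so \eqref{IA} holds, and there are no undivided histories so \eqref{NCUH} is vacuous). In $\mathfrak{M}_1$ I colour $p$ by $[x_1=x_2]$ and set $a:=[x_1=0]$, which makes $\Diamond[1]a$ and $\Diamond[2](a\to p)$ true, so $A$ holds at $\dag$; in $\mathfrak{M}_2$ I colour $p$ by $[x_3=x_4]$ and set $b:=[x_3=0]$, making $\neg B$ true. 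The key observation is that in both colourings every choice cell of every agent, as well as the whole set of histories, contains both a $p$-history and a $\neg p$-history; consequently the relation ``$p$ has the same value'' is a bisimulation for the modalities $[1],[2],[3],[4],\Box$ and the only shared atom $p$, so the two points verify the same $\mathcal{L}^{\{1,2,3,4\}}_{\{p\}}$-formulas. Any interpolant $C$ would then have to be true at the $\mathfrak{M}_1$-point (because $\vdash A\to C$) and false at the $\mathfrak{M}_2$-point (because $\vdash C\to B$ and $\neg B$ holds there), contradicting bisimilarity.

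The main obstacle is the creative step of choosing $A$, $B$ together with this \emph{non-isomorphic but shared-bisimilar} pair of models; everything else (the realization as genuine stit models, the bichromaticity check, and the routine bisimulation-invariance argument) is mechanical. This is precisely the point where four agents are needed: the $2{+}2$ split lets $\{1,2\}$ force a $p$-history and $\{3,4\}$ force a $\neg p$-history while the dependence of $p$ on the active pair stays hidden behind bichromatic cells — a configuration that the single-extra-agent maneuver of Lemma~\ref{technical}.2, underlying Lemma~\ref{inseparable2}, rules out whenever at most three agents are involved.
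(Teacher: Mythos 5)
Your proposal is correct, and at the strategic level it is the same route as the paper's: essentially the same counterexample implication (your $A$ and $B$ are provably equivalent to the paper's $\Diamond([1]p\wedge[2](p\to q))$ and $\neg\Diamond([3]r\wedge[4](r\to\neg q))$, since \eqref{A3} plus the S5 properties of $\Box$ let $\Diamond$ distribute over conjunctions of distinct-agent stit formulas), provability via the \eqref{A3}-collapse to $\bot$, and refutation of any interpolant by exhibiting a bisimulation, in the sense of Definition~\ref{bisimulation}, between two models that agree on the shared vocabulary. Where you genuinely diverge is in the countermodels and in how the bisimulation is verified, and your version is materially simpler. The paper uses $32$-history models (the signed tuples $(a,b,c,d)^{\pm}$) with asymmetric valuations of the shared variable, and then checks that ``same $q$-value'' is a bisimulation by the lengthy case analysis of Lemma~\ref{L:b}; you instead take the plain $16$-leaf product models with the symmetric colourings $p\equiv[x_1=x_2]$ and $p\equiv[x_3=x_4]$, and isolate a clean sufficient criterion: if every choice cell of every agent (and the whole history fan) in both models contains both a $p$-history and a $\neg p$-history, then ``same $p$-value'' automatically satisfies \eqref{atoms}, \eqref{forth} and \eqref{back}. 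This bichromaticity criterion replaces the paper's case-by-case verification entirely (indeed the paper's own models satisfy it, so Lemma~\ref{L:b} could have been proved this way), at the cost of having to state and check the criterion, which you do correctly: in your models each cell leaves two coordinates free, hence is bichromatic, while $A$ and $\neg B$ still hold because the active pair of agents can pin $p$ down inside a single cell. Your explicit padding of agents $5,\ldots,n$ with vacuous single-cell choices is also a point in your favour: it is needed to reconcile $n$-agent provability with four-agent countermodels and preserves \eqref{IA} and the bisimulation conditions trivially, whereas the paper passes over this step in silence. The only details left implicit, both routine, are the valuation at leaf moments and the check that the product choice structure satisfies \eqref{IA}.
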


Again, we start with some technicalities:

\begin{lemma}\label{counterexample}
Let $j_1, j_2, j_3, j_4 \in Ag$ and propositional variables $p, q,
r$ be pairwise different. Then:
$$
\vdash \Diamond([j_1]p \wedge [j_2](p \to q)) \to
\neg\Diamond([j_3]r \wedge [j_4](r \to \neg q)).
$$
\end{lemma}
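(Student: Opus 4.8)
The plan is to argue by \emph{reductio}, reducing the claim to an inconsistency. Since the desired implication is propositionally equivalent to $\neg\bigl(\Diamond([j_1]p \wedge [j_2](p \to q)) \wedge \Diamond([j_3]r \wedge [j_4](r \to \neg q))\bigr)$, it suffices to show that the conjunction
$$
\Diamond([j_1]p \wedge [j_2](p \to q)) \wedge \Diamond([j_3]r \wedge [j_4](r \to \neg q))
$$
is $\mathbb{S}$-inconsistent. First I would peel off the individual diamonds: from $\vdash \Diamond(X \wedge Y) \to \Diamond X$ and $\vdash \Diamond(X \wedge Y) \to \Diamond Y$ (a consequence of the S5 properties of $\Box$), the two conjuncts above yield all four of $\Diamond[j_1]p$, $\Diamond[j_2](p \to q)$, $\Diamond[j_3]r$, and $\Diamond[j_4](r \to \neg q)$.

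The central step is to recombine these four possibilities into a single one. Because $j_1, j_2, j_3, j_4$ are pairwise different, axiom \eqref{A3} applies with $n = 4$, so that
$$
\vdash (\Diamond[j_1]p \wedge \Diamond[j_2](p \to q) \wedge \Diamond[j_3]r \wedge \Diamond[j_4](r \to \neg q)) \to \Diamond([j_1]p \wedge [j_2](p \to q) \wedge [j_3]r \wedge [j_4](r \to \neg q)).
$$
Hence the starting conjunction entails $\Diamond([j_1]p \wedge [j_2](p \to q) \wedge [j_3]r \wedge [j_4](r \to \neg q))$.

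Next I would show the formula inside this last diamond is itself refutable. Each $[j]$ is an S5 modality (Lemma \ref{technical}.1), so $\vdash [j]X \to X$; stripping the four stit operators gives $p$, $p \to q$, $r$, and $r \to \neg q$, from which propositional logic derives $q$ (from the first two) and $\neg q$ (from the last two), hence $\bot$. Thus $\vdash ([j_1]p \wedge [j_2](p \to q) \wedge [j_3]r \wedge [j_4](r \to \neg q)) \to \bot$. Applying \eqref{Nec} and the S5 properties of $\Box$ (in the form $\vdash \Diamond\bot \to \bot$, i.e.\ that $\Diamond$ of a refutable formula is refutable) yields $\vdash \neg\Diamond([j_1]p \wedge [j_2](p \to q) \wedge [j_3]r \wedge [j_4](r \to \neg q))$. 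Together with the entailment from \eqref{A3}, this refutes the conjunction we began with, which is exactly the claim.

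The only real subtlety — the hard part, though it is mild here — lies in the application of \eqref{A3}: one must verify that the four agents are genuinely distinct (guaranteed by hypothesis) so that the independence axiom is available in precisely the form needed to merge four separate possibilities into one, and that the distribution of $\Diamond$ over the two binary conjunctions is correctly licensed by the S5 behaviour of $\Box$. Everything after the merge is routine propositional reasoning built on the reflexivity of the stit modalities.
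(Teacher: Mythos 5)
Your proposal is correct and follows essentially the same route as the paper's own proof: distribute $\Diamond$ over the two conjunctions via the S5 properties of $\Box$, merge the four resulting possibility claims with axiom \eqref{A3} (using that $j_1,\ldots,j_4$ are pairwise distinct), derive $\bot$ under the merged diamond via the T-axiom for each $[j_i]$, and then refute the diamond by \eqref{Nec} and S5 reasoning. No gaps; the argument matches the paper step for step.
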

\begin{proof}
We reason as follows:
\begin{align}
&\Diamond([j_1]p \wedge [j_2](p \to q)) \wedge\Diamond([j_3]r
\wedge [j_4](r \to \neg q))\label{E:ce1}&&\text{(premise)}\\
&\Diamond([j_1]p \wedge [j_2](p \to q)) \to (\Diamond[j_1]p \wedge
\Diamond[j_2](p \to q))\label{E:ce2}&&\text{($\Box$ is S5)}\\
&\Diamond([j_3]r \wedge [j_4](r \to \neg q)) \to (\Diamond[j_3]r
\wedge
\Diamond[j_4](r \to \neg q))\label{E:ce3}&&\text{($\Box$ is S5)}\\
&\Diamond[j_1]p \wedge \Diamond[j_2](p \to q) \wedge\Diamond[j_3]r
\wedge \Diamond[j_4](r \to \neg q)\label{E:ce4}&&\text{(from \eqref{E:ce1}--\eqref{E:ce3})}\\
&\Diamond([j_1]p \wedge [j_2](p \to q) \wedge[j_3]r
\wedge [j_4](r \to \neg q))\label{E:ce5}&&\text{(from \eqref{E:ce4}, \eqref{A3})}\\
&([j_1]p \wedge [j_2](p \to q) \wedge[j_3]r \wedge [j_4](r \to
\neg q)) \to\notag\\
&\qquad\qquad\qquad\qquad\to (p \wedge (p \to q) \wedge r \wedge (r \to \neg q))\label{E:ce6}&&\text{($[j_1]$--$[j_4]$ are S5)}\\
&([j_1]p \wedge [j_2](p \to q) \wedge[j_3]r \wedge [j_4](r \to
\neg q)) \to\bot\label{E:ce7}&&\text{(from \eqref{E:ce6} by prop. logic)}\\
&\Diamond([j_1]p \wedge [j_2](p \to q) \wedge[j_3]r \wedge [j_4](r
\to
\neg q)) \to\bot\label{E:ce8}&&\text{(from \eqref{E:ce7} since $\Box$ is S5)}\\
&\bot\label{E:ce9}&&\text{(from \eqref{E:ce5} and \eqref{E:ce8})}
\end{align}
\end{proof}

\begin{definition}\label{bisimulation}
Let $\mathfrak{S} = \langle Tree, \leq, Choice, V\rangle$ and
$\mathfrak{S}' = \langle Tree', \leq', Choice', V'\rangle$ be
$(Ag, V)$-stit models, and let $m \in Tree$ and $m' \in Tree'$.
Relation $B \in H^\mathfrak{S}_m \times H^{\mathfrak{S}'}_{m'}$ we
will call a \emph{bisimulation between} $(\mathfrak{S}, m)$
\emph{and} $(\mathfrak{S}', m')$, iff the domain of $B$ is
$H^\mathfrak{S}_m$, the counter-domain of $B$ is
$H^{\mathfrak{S}'}_{m'}$, and the following holds for all $p \in
V$, all $j \in Ag$, all $h_1, h_2 \in H^\mathfrak{S}_m$ and all
$h'_1, h'_2 \in H^{\mathfrak{S}'}_{m'}$:
\begin{align}
& h_1\mathrel{B}h'_1 \Rightarrow (\mathfrak{S}, m, h_1 \models p
\Leftrightarrow \mathfrak{S}', m', h'_1 \models p)\label{atoms}\tag{\text{atoms}}\\
&(h_1\mathrel{B}h'_1 \& h_2 \in Choice^m_j(h_1)) \Rightarrow
(\exists h'_3 \in (Choice')^{m'}_j(h'_1))(h_2\mathrel{B}h'_3)\label{forth}\tag{\text{forth}}\\
&(h_1\mathrel{B}h'_1 \& h'_2 \in (Choice')^{m'}_j(h'_1))
\Rightarrow (\exists h_3 \in
Choice^m_j(h_1))(h_3\mathrel{B}h'_2)\label{back}\tag{\text{back}}
\end{align}
\end{definition}

We show that existence of a bisimulation implies the equality of
theories:
\begin{lemma}\label{L:bisimulation}
Let $\mathfrak{S} = \langle Tree, \leq, Choice, V\rangle$ and
$\mathfrak{S}' = \langle Tree', \leq', Choice', V'\rangle$ be
$(Ag, V)$-stit models, and let $B \in H^\mathfrak{S}_m \times
H^{\mathfrak{S}'}_{m'}$ be a bisimulation between $(\mathfrak{S},
m)$ and $(\mathfrak{S}', m')$. Then, for all $A \in
\mathcal{L}^{Ag}_V$ and all $h_1 \in H^\mathfrak{S}_m$ and $h'_1
\in H^{\mathfrak{S}'}_{m'}$:
$$
h_1\mathrel{B}h'_1 \Rightarrow (\mathfrak{S}, m, h_1 \models A
\Leftrightarrow \mathfrak{S}', m', h'_1 \models A).
$$
\end{lemma}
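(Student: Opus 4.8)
This is the standard modal bisimulation-invariance result, so the plan is to prove the biconditional $\mathfrak{S}, m, h_1 \models A \Leftrightarrow \mathfrak{S}', m', h'_1 \models A$ (under the hypothesis $h_1 \mathrel{B} h'_1$) by induction on the construction of $A$. First I would fix the bisimulation $B$ and note that, since the statement must hold for \emph{all} bisimulation-related pairs $h_1 \mathrel{B} h'_1$ simultaneously, the induction hypothesis should be stated as: for the subformula in question, the biconditional holds for every pair of histories related by $B$. This uniform formulation is what makes the modal cases go through.

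\emph{Basis and Boolean cases.} The atomic case $A = p$ is exactly the clause \eqref{atoms} from Definition \ref{bisimulation}. The case $A = \bot$ is trivial since $\bot$ fails everywhere. For $A = A_1 \to A_2$, the biconditional follows immediately from the two induction hypotheses for $A_1$ and $A_2$ applied to the same pair $h_1 \mathrel{B} h'_1$, together with the truth-functional clause for $\to$; no appeal to the bisimulation conditions beyond the atomic one is needed here.

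\emph{The modal case $A = [j]A_1$.} This is where \eqref{forth} and \eqref{back} do the work. Assume $h_1 \mathrel{B} h'_1$. For the left-to-right direction, suppose $\mathfrak{S}, m, h_1 \models [j]A_1$; to show $\mathfrak{S}', m', h'_1 \models [j]A_1$, take an arbitrary $h'_2 \in (Choice')^{m'}_j(h'_1)$ and use \eqref{back} to obtain $h_3 \in Choice^m_j(h_1)$ with $h_3 \mathrel{B} h'_2$. From $\mathfrak{S}, m, h_1 \models [j]A_1$ we get $\mathfrak{S}, m, h_3 \models A_1$, and the induction hypothesis applied to the related pair $h_3 \mathrel{B} h'_2$ yields $\mathfrak{S}', m', h'_2 \models A_1$; since $h'_2$ was arbitrary in the choice cell, $\mathfrak{S}', m', h'_1 \models [j]A_1$. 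The converse direction is symmetric, using \eqref{forth} in place of \eqref{back}. The case $A = \Box A_1$ is handled analogously but more simply: the quantifier ranges over all of $H^\mathfrak{S}_m$ (resp. $H^{\mathfrak{S}'}_{m'}$), so one uses directly that the domain of $B$ is $H^\mathfrak{S}_m$ and its counter-domain is $H^{\mathfrak{S}'}_{m'}$ to transfer the universal claim across $B$, again invoking the induction hypothesis on each related pair.

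\emph{Main obstacle.} There is no deep obstacle here; the proof is routine. The one point requiring care is the bookkeeping of the induction hypothesis: because the modal clauses shift attention from $(h_1, h'_1)$ to a \emph{different} related pair $(h_3, h'_2)$ (or $(h_2, h'_3)$), the hypothesis must be quantified over all $B$-related pairs rather than fixed to the original pair. Getting this quantifier structure right — and correctly matching the \emph{back} condition to the $\Rightarrow$ direction for $[j]$ and the \emph{forth} condition to the $\Leftarrow$ direction (and symmetrically for $\Box$, which needs both the domain and counter-domain conditions) — is the only place where a careless argument could slip.
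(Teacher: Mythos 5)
Your proposal is correct and follows essentially the same route as the paper's own proof: induction on the structure of $A$ with the induction hypothesis quantified over all $B$-related pairs, using \eqref{atoms} for the basis, the domain/counter-domain conditions for the $\Box$ case, and \eqref{back} for the left-to-right (resp. \eqref{forth} for the right-to-left) direction of the $[j]$ case. The paper's argument matches yours step for step, including the pairing of \emph{back} with the forward direction.
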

\begin{proof} By induction on the construction of $A$. The basis
follows from \eqref{atoms}, and the Boolean cases in the induction
step are trivial. We consider the modal cases:

\emph{Case 1}. $A$ has the form $\Box B$. ($\Rightarrow$) Assume
that $\mathfrak{S}, m, h_1 \models \Box B$ and let $h'_2 \in
H^{\mathfrak{S}'}_{m'}$ be arbitrary. Then, since the
counter-domain of $B$ is $H^{\mathfrak{S}'}_{m'}$, choose any $h_2
\in H^\mathfrak{S}_m$ such that $h_2\mathrel{B}h'_2$. We have
$\mathfrak{S}, m, h_2 \models B$, whence, by induction hypothesis,
it follows that $\mathfrak{S}', m', h'_2 \models B$. Since $h'_2
\in H^{\mathfrak{S}'}_{m'}$ was chosen arbitrarily, we infer that
$\mathfrak{S}', m', h'_1 \models \Box B = A$. ($\Leftarrow$)
Similarly to the ($\Rightarrow$)-part, using this time the fact
that the domain of $B$ is $H^\mathfrak{S}_m$.

\emph{Case 2}. $A$ has the form $[j]B$ for some $j \in Ag$.
($\Rightarrow$) Assume that $\mathfrak{S}, m, h_1 \models [j]B$
and let $h'_2 \in Choice'^{m'}_j(h'_1)$ be arbitrary. Using
condition \eqref{back}, choose a $h_3 \in Choice^{m}_j(h_1)$ such
that $h_3\mathrel{B}h'_2$. We have $\mathfrak{S}, m, h_3 \models
B$, whence, by induction hypothesis, it follows that
$\mathfrak{S}', m', h'_2 \models B$. Since $h'_2 \in
Choice'^{m'}_j(h'_1)$ was chosen arbitrarily, we infer that
$\mathfrak{S}', m', h'_1 \models [j]B = A$. ($\Leftarrow$)
Similarly to the ($\Rightarrow$)-part, using this time condition
\eqref{forth} instead of \eqref{back}.
\end{proof}

%We now prove the key lemma for the main result of this section:
%\begin{lemma}\label{L:negative}
%For every $C \in \mathcal{L}^{\{1,2,3,4\}}_{\{q \}}$ either
%$\not\vdash \Diamond([1]p \wedge [2](p \to q)) \to C$ or
%$\not\vdash C \to \Diamond([j_3]r \wedge [j_4](r \to \neg q))$
%holds.
%\end{lemma}
Now we need to define two models: a $(\{1,2,3,4\}, \{ p,
q\})$-stit model $\mathfrak{S} = \langle Tree, \leq, Choice,
V\rangle$, and a $(\{1,2,3,4\}, \{ q, r\})$-stit model
$\mathfrak{S}' = \langle Tree', \leq', Choice', V'\rangle$ to be
used in the proof of Theorem \ref{negative}. First, we define one
auxiliary set:
$$
4Tup := \{ (a,b,c,d)^+, (a,b,c,d)^- \mid a,b,c,d \in \{0,1\} \}.
$$
Next, we start with the definitions of the models, beginning with
their temporal substructures.
\begin{definition}\label{D:temporal}
We set:
\begin{enumerate}
\item $Tree := \{ \dag\} \cup 4Tup$.

\item $\leq$ is the reflexive closure of $\{ (\dag,m) \mid m \in
4Tup\}$.

\item $Tree' := \{ \ddag\} \cup 4Tup$.

\item $\leq'$ is the reflexive closure of $\{ (\ddag,m) \mid m \in
4Tup\}$.
\end{enumerate}
\end{definition}
For an integer $1 \leq j \leq 4$, by the $j$-th projection of $m
\in 4Tup = Tree \cap Tree'$ we will mean the $j$-th projection of
the corresponding $4$-tuple, regardless of whether $m$ is signed
by $+$ or $-$. Thus, for any appropriate $a,b,c,d \in \{0,1\}$,
the two elements $(a,b,c,d)^+$ and $(a,b,c,d)^-$ have the same
$j$-th projection for every $1 \leq j \leq 4$. For an $m \in 4Tup$
and an integer $1 \leq j \leq 4$, the $j$-th projection of $m$
will be denoted by $pr_j(m)$. The element from $\{ +, - \}$ by
which $m$ is signed, we will denote $sign(m)$ so that, e.g.,
$sign((a,b,c,d)^+) = +$. Finally, the complete $4$-tuple signed by
$sign(m)$ will be called the \emph{core} of $m$ and will be
denoted by $core(m)$ so that $core(m) = (pr_1(m), pr_2(m),
pr_3(m), pr_4(m))$.

The history structure induced by these definitions is as follows.
For $\mathfrak{S}$ we get that:
\begin{equation}\label{E:hists}
Hist(\mathfrak{S}) = \{ h_m = (\dag, m)\mid m \in 4Tup \} =
H^\mathfrak{S}_\dag
\end{equation}
Similarly, for $\mathfrak{S}'$ we get that:
\begin{equation}\label{E:hists'}
Hist(\mathfrak{S}') = \{ g_m = (\ddag, m)\mid m \in 4Tup \} =
H^{\mathfrak{S}'}_\ddag
\end{equation}
Once we know the sets of histories induced by $\mathfrak{S}$ and
$\mathfrak{S}'$, respectively, it is immediate to deduce the fans
of histories passing through any given moment in these models.
Namely, it follows that:
\begin{equation}\label{E:hists-dag}
    H^\mathfrak{S}_\dag = Hist(\mathfrak{S}),\qquad H^\mathfrak{S}_m = \{ h_m
    \},\qquad\text{for all }m \in 4Tup
\end{equation}
and:
\begin{equation}\label{E:hists-ddag}
    H^{\mathfrak{S}'}_\ddag = Hist(\mathfrak{S}'),\qquad H^{\mathfrak{S}'}_m = \{ g_m
    \},\qquad\text{for all }m \in 4Tup
\end{equation}

 This insight into the history structure allows for a handy
definition of choice functions and variable evaluations for the
two models:
\begin{definition}\label{D:choice}
We set that:
\begin{enumerate}
\item $Choice^\dag_j = \{ \{ h_{m} \mid pr_j(m) = 0 \}, \{ h_{m}
\mid pr_j(m) = 1 \}\}$ for all $1 \leq j \leq 4$.

\item $Choice^m_j = \{ H^\mathfrak{S}_m \} = \{ \{ h_m \}\}$ for
all $m \in 4Tup$ and $1 \leq j \leq 4$.

\item $V(p) = \{ (\dag, h_m) \mid pr_1(m) = 0 \}$,

$V(q) = \{ (\dag, h_m) \mid (pr_1(m) = pr_2(m) = 0) \vee (pr_3(m)
= pr_4(m) = 0)\vee sign(m) = + \}$.

\item $Choice'^\ddag_j = \{ \{ g_{m} \mid pr_j(m) = 0 \}, \{ g_{m}
\mid pr_j(m) = 1 \}\}$ for all $1 \leq j \leq 4$.

\item $Choice'^m_j = \{ H^{\mathfrak{S}'}_m \} = \{ \{ g_m \}\}$
for all $m \in 4Tup$ and $1 \leq j \leq 4$.

\item $V'(q) = \{ (\ddag, g_m) \mid (pr_3(m) = pr_4(m) = 0) \vee
(sign(m) = + \& (pr_3(m) \neq 1 \vee pr_4(m) \neq 0)) \}$,

$V'(r) = \{ (\ddag, g_m) \mid pr_3(m)  = 1 \}$.
\end{enumerate}
\end{definition}
We now establish a number of further lemmas and corollaries.
\begin{corollary}\label{L:choice-cor}
Let $1 \leq j \leq 4$. Then $Choice^\dag_j(h_m) = \{ h_{m_1} \mid
pr_j(m) = pr_j(m_1) \}$ and $Choice'^\ddag_j(g_m) = \{ g_{m_1}
\mid pr_j(m) = pr_j(m_1) \}$ for all $m \in 4Tup$.
\end{corollary}
\begin{proof}
The Corollary follows immediately from Definition \ref{D:choice}.1
and \ref{D:choice}.4, and the fact that for every $m \in 4Tup$ we
have either $pr_j(m) = 0$ or $pr_j(m) = 1$.
\end{proof}
\begin{lemma}\label{L:modelhood}
$\mathfrak{S}$, as given in Definitions \ref{D:temporal} and
\ref{D:choice}, is a $(\{1,2,3,4\}, \{ p, q\})$-stit model,
whereas $\mathfrak{S}'$, as given in the same Definitions, is a
$(\{1,2,3,4\}, \{ q, r \})$-stit model.
\end{lemma}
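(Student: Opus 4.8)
The plan is to check, for the structure $\mathfrak{S}$, each of the defining conditions of an $(Ag,V)$-stit model listed in Section \ref{preliminaries}, and then to observe that the verification for $\mathfrak{S}'$ is completely parallel, since $\mathfrak{S}'$ has exactly the same temporal and choice shape (merely with $\ddag$, $g_m$, $Choice'$, and $V'$ in place of $\dag$, $h_m$, $Choice$, and $V$), while the evaluation functions $V$, $V'$ play no role in modelhood beyond being maps into the appropriate power set. Concretely, I would verify that $\leq$ is a partial order, that each $Choice^m_j$ is a partition of $H^\mathfrak{S}_m$, and that the four frame constraints \eqref{HC}, \eqref{NBB}, \eqref{NCUH}, and \eqref{IA} hold.

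The routine part comes first. By Definition \ref{D:temporal}, $\leq$ is the reflexive closure of a relation making $\dag$ the unique element below every $m \in 4Tup$, with the elements of $4Tup$ pairwise incomparable; reflexivity, antisymmetry, and (vacuous) transitivity are then immediate, so $\leq$ is a partial order on the non-empty set $Tree$. For the partition condition, at a leaf $m \in 4Tup$ we have $Choice^m_j = \{H^\mathfrak{S}_m\}$ by Definition \ref{D:choice}.2, which is the trivial partition of the singleton $H^\mathfrak{S}_m$; at the root, $Choice^\dag_j$ splits $H^\mathfrak{S}_\dag$ into the cell of histories with $j$-th projection $0$ and the cell with $j$-th projection $1$, and since every $m \in 4Tup$ satisfies exactly one of $pr_j(m) = 0$ and $pr_j(m) = 1$ while both cells are non-empty, this is indeed a partition. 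Constraint \eqref{HC} holds because $\dag$ is a $\leq$-least element and hence a common predecessor of any two moments, and \eqref{NBB} holds because the only predecessors of any $m \in 4Tup$ are $\dag$ and $m$ itself, which are comparable. For \eqref{NCUH} I would observe that distinct histories $h_m \neq h_{m_1}$ share no moment strictly above $\dag$, as each leaf lies on a single history by \eqref{E:hists-dag}, so no two distinct histories are undivided at $\dag$, and at a leaf there is only one history; the constraint is therefore satisfied vacuously.

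The substantive step is \eqref{IA}, and this is where the product structure of $4Tup$ is essential. At a leaf $m$ every agent has the single vacuous choice $H^\mathfrak{S}_m$, so the intersection is $H^\mathfrak{S}_m \neq \emptyset$. At $\dag$, a choice function $f$ with $f(j) \in Choice^\dag_j$ amounts, by Definition \ref{D:choice}.1, to selecting a value $v_j \in \{0,1\}$ for each $j \in \{1,2,3,4\}$ and setting $f(j) = \{h_m \mid pr_j(m) = v_j\}$. Then $\bigcap_{j=1}^4 f(j)$ is the set of those $h_m$ whose core realizes the full profile $(v_1,v_2,v_3,v_4)$, and I would conclude non-emptiness by exhibiting the witness $h_{(v_1,v_2,v_3,v_4)^+}$, which belongs to $4Tup$ precisely because $4Tup$ contains a signed copy of every element of $\{0,1\}^4$. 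This is the key point: the definition of $4Tup$ is engineered so that every combination of one-agent choices at the root is jointly realizable, and it is the only place where more than bookkeeping is required. The argument for $\mathfrak{S}'$ is identical up to the renaming noted above, which completes the proof.
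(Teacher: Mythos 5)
Your proof is correct and takes essentially the same approach as the paper's: the routine verification of the order, partition, and frame conditions, with the substantive work concentrated in \eqref{IA} at the root, where your witness $h_{(v_1,v_2,v_3,v_4)^+}$ is exactly the paper's moment $m_0 = (pr_1(m_1), pr_2(m_2), pr_3(m_3), pr_4(m_4))^+$, both relying on $4Tup$ containing a signed copy of every profile in $\{0,1\}^4$. The only cosmetic difference is that you parametrize the choice cells directly by values $v_j \in \{0,1\}$ rather than by representative histories, which changes nothing.
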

\begin{proof}
We consider $\mathfrak{S}$ first. Indeed, $\leq$ is obviously a
forward-branching partial order and $\dag$ is the $\leq$-least
element in $Tree$ so that \eqref{HC} is satisfied. Also, there are
no undivided histories at any moment of $Tree$ so that
\eqref{NCUH} is also satisfied trivially. Next, for any $m \in
4Tup$ and $1 \leq j \leq 4$, $Choice^m_j$ is a trivial partition
of $H^\mathfrak{S}_m$. As for $\dag$ itself, we have, by
Definition \ref{D:choice}.1, that, for any $1 \leq j \leq 4$,
$Choice^\dag_j = \{ \{ h_{m} \mid pr_j(m) = 0 \}, \{ h_{m} \mid
pr_j(m) = 1 \}\}$, which is obviously a pair of disjoint subsets
of $H^\mathfrak{S}_\dag = Hist(\mathfrak{S})$ such that their
union makes up $H^\mathfrak{S}_\dag = Hist(\mathfrak{S})$ itself.
The non-emptiness of both sets in this pair follows from the fact
that $(0,0,0,0)^+$ and $(1,1,1,1)^+$ are in $4Tup$. Finally, we
tackle \eqref{IA}. Assume that $f$ is defined on $\{ 1,2,3,4 \}$
in such a way that, for a given $m \in Tree$, we have $f(j) \in
Choice^m_j$ for all $1 \leq j \leq 4$. If $m \neq \dag$, then
clearly $\bigcap_{1 \leq j \leq 4}f(j) = H^\mathfrak{S}_m \neq
\emptyset$. On the other hand, if $m = \dag$, then, for every $1
\leq j \leq 4$, choose an $h_j \in f(j)$ so that we get $f(j) =
Choice^\dag_j(h_{j})$ for all $1 \leq j \leq 4$. Then it follows
from \eqref{E:hists} that, for every $1 \leq j \leq 4$, there must
exist an $m_j \in 4Tup$ such that $h_j = h_{m_j}$. But then,
consider the $4$-tuple $m_0 = (pr_1(m_1), pr_2(m_2), pr_3(m_3),
pr_4(m_4))^+$. It is immediate from Definition \ref{D:choice}.1
and Corollary \ref{L:choice-cor} that for every $1 \leq j \leq 4$
we have $h_{m_0} \in Choice^\dag_j(h_{m_j}) = f(j)$ whence
$h_{m_0} \in \bigcap_{1 \leq j \leq 4}f(j) \neq \emptyset$.

The proof of the Lemma for $\mathfrak{S}'$ is similar.
\end{proof}

\begin{lemma}\label{satisfied}
We have both:
$$
\mathfrak{S}, \dag, h_m \models \Diamond([1]p \wedge [2](p \to
q)),
$$
and:
$$
\mathfrak{S}', \ddag, g_m \models\Diamond([3]r \wedge [4](r \to
\neg q)),
$$
for all $m \in 4Tup$.
\end{lemma}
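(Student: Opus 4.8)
The plan is to exploit the observation that at the roots $\dag$ and $\ddag$ the modality $\Diamond$ ranges over the full fans $H^\mathfrak{S}_\dag = Hist(\mathfrak{S})$ and $H^{\mathfrak{S}'}_\ddag = Hist(\mathfrak{S}')$, as recorded in \eqref{E:hists-dag} and \eqref{E:hists-ddag}. Consequently, whether $\mathfrak{S}, \dag, h_m \models \Diamond A$ holds does not depend on the chosen history $h_m$: it holds for every $m \in 4Tup$ iff it holds for some history in the fan. Thus in each of the two cases it suffices to exhibit a single witnessing history and verify that it satisfies the relevant conjunction, the verification itself being a direct computation from Corollary \ref{L:choice-cor} and Definition \ref{D:choice}.

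For the first claim, I would take the witness $h_{m_0}$ with $m_0 = (0,0,0,0)^+$, so that $pr_1(m_0) = pr_2(m_0) = 0$. To check $[1]p$, note that by Corollary \ref{L:choice-cor} the cell $Choice^\dag_1(h_{m_0})$ equals $\{ h_{m'} \mid pr_1(m') = 0 \}$, and by Definition \ref{D:choice}.3 every such history satisfies $p$. To check $[2](p \to q)$, observe that $Choice^\dag_2(h_{m_0}) = \{ h_{m'} \mid pr_2(m') = 0 \}$; for any such $m'$, either $p$ fails (and the implication is vacuous) or $p$ holds, in which case $pr_1(m') = 0$, so $pr_1(m') = pr_2(m') = 0$ and the first disjunct defining $V(q)$ secures $q$. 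Hence $[1]p \wedge [2](p \to q)$ holds at $h_{m_0}$, and the first claim follows.

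For the second claim, I would take $g_{m_1}$ with $m_1 = (0,0,1,0)^+$, so that $pr_3(m_1) = 1$ and $pr_4(m_1) = 0$. By Corollary \ref{L:choice-cor}, $Choice'^\ddag_3(g_{m_1}) = \{ g_{m'} \mid pr_3(m') = 1 \}$, and every such history satisfies $r$ by Definition \ref{D:choice}.6, giving $[3]r$. For $[4](r \to \neg q)$, the cell $Choice'^\ddag_4(g_{m_1}) = \{ g_{m'} \mid pr_4(m') = 0 \}$; for such an $m'$ at which $r$ holds we have $pr_3(m') = 1$ and $pr_4(m') = 0$, so both disjuncts defining $V'(q)$ fail (the first since $pr_3(m') \neq 0$, the second since $pr_3(m') \neq 1 \vee pr_4(m') \neq 0$ is false), and this failure is uniform in $sign(m')$. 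Thus $q$ fails, $\neg q$ holds, and the implication is satisfied; where $r$ fails it holds vacuously. Hence $[3]r \wedge [4](r \to \neg q)$ holds at $g_{m_1}$, yielding the second claim.

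The only delicate point is the valuation of $q$ in $\mathfrak{S}'$, whose definition splits on $sign(m')$; the witness is chosen precisely so that the sign-dependent disjunct collapses for both values of the sign, keeping the computation routine. The substantive step, and the one that makes the whole argument short, is the reduction to a single witness afforded by the history-independence of $\Diamond$ at the root.
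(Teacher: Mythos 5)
Your proof is correct and follows essentially the same route as the paper's: both arguments pick the witnesses $h_{(0,0,0,0)^+}$ and $g_{(0,0,1,0)^+}$, verify the conjunctions by the same projection computations from Definition \ref{D:choice} and Corollary \ref{L:choice-cor} (including the observation that the sign-dependent disjunct of $V'(q)$ fails for either sign when $pr_3 = 1$ and $pr_4 = 0$), and then use the fact that $\Diamond$ at the root quantifies over the full fan of histories to conclude the claim for all $m \in 4Tup$.
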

\begin{proof}
As for the first part of the Lemma, let $m := (0,0,0,0)^+$ and
consider $h_m$. If $h \in Choice^\dag_1(h_m)$ is chosen
arbitrarily, then, by \eqref{E:hists-dag}, $h = h_{m_1}$ for some
$m_1 \in 4Tup$ and, moreover, $pr_1(m_1) = pr_1(m) = 0$. But then,
by Definition \ref{D:choice}.3, $(\dag, h_{m_1}) \in V(p)$ so that
$\mathfrak{S}, \dag, h_{m_1} \models p$. Since $h_{m_1} \in
Choice^\dag_1(h_m)$ was arbitrary, this means that $\mathfrak{S},
\dag, h_{m} \models [1]p$.

Furthermore, let $h \in Choice^\dag_2(h_m)$ be chosen arbitrarily.
Then, again by \eqref{E:hists-dag}, $h = h_{m_1}$ for some $m_1
\in 4Tup$ and, moreover, $pr_2(m_1) = pr_2(m) = 0$. If
$\mathfrak{S}, \dag, h_{m_1} \models p$, this means that $(\dag,
h_{m_1}) \in V(p)$ so that also $pr_1(m_1) = 0$. But in this case
we will have $pr_1(m_1) = pr_2(m_1) = 0$ which means that also
$\mathfrak{S}, \dag, h_{m_1} \models q$. Thus we have shown, for
an arbitrary $h_{m_1} \in Choice^\dag_2(h_m)$, that whenever
$\mathfrak{S}, \dag, h_{m_1} \models p$, it is also the case that
$\mathfrak{S}, \dag, h_{m_1} \models q$ whence it follows that
$\mathfrak{S}, \dag, h_{m} \models [2](p \to q)$.

Summing up, we must have $\mathfrak{S}, \dag, h_{m} \models [1]p
\wedge [2](p \to q)$ for $m = (0,0,0,0)^+$, whence, given the
semantics of $\Box$ and \eqref{E:hists-dag}, it follows that
$\mathfrak{S}, \dag, h_m \models \Diamond([1]p \wedge [2](p \to
q))$ for all $m \in 4Tup$.

Turning now to the second part of the Lemma, we set $m :=
(0,0,1,0)^+$ and consider $g_m$. If $g \in Choice'^\ddag_3(g_m)$
is chosen arbitrarily, then, by \eqref{E:hists-dag}, $g = g_{m_1}$
for some $m_1 \in 4Tup$ and, moreover, $pr_3(m_1) = pr_3(m) = 1$.
But then, by Definition \ref{D:choice}.6, $(\ddag, g_{m_1}) \in
V'(r)$ so that $\mathfrak{S}', \ddag, g_{m_1} \models r$. Since
$g_{m_1} \in Choice'^\ddag_3(g_m)$ was arbitrary, this means that
$\mathfrak{S}', \ddag, g_{m} \models [3]r$.

Furthermore, let $g \in Choice'^\ddag_4(g_m)$ be chosen
arbitrarily. Then, again by \eqref{E:hists-dag}, $g = g_{m_1}$ for
some $m_1 \in 4Tup$ and, moreover, $pr_4(m_1) = pr_4(m) = 0$. If
$\mathfrak{S}', \ddag, g_{m_1} \models r$, this means that
$(\ddag, g_{m_1}) \in V'(r)$ so that also $pr_3(m_1) = 1$. But in
this case we will have both $pr_3(m_1) = 1$ and $pr_4(m_1) = 0$
which means that also $\mathfrak{S}', \ddag, g_{m_1} \models \neg
q$. Thus we have shown, for an arbitrary $g_{m_1} \in
Choice'^\ddag_4(g_m)$, that whenever $\mathfrak{S}', \ddag,
g_{m_1} \models r$, it is also the case that $\mathfrak{S}',
\ddag, g_{m_1} \models \neg q$ whence it follows that
$\mathfrak{S}', \ddag, g_{m} \models [4](r \to \neg q)$.

Summing up, we must have $\mathfrak{S}', \ddag, g_{m} \models [3]r
\wedge [4](r \to \neg q)$ for $m = (0,0,1,0)^+$, which means,
given the semantics of $\Box$ and \eqref{E:hists-ddag}, that
$\mathfrak{S}', \ddag, g_m \models \Diamond([3]r \wedge [4](r \to
\neg q))$ for all $m \in 4Tup$.
\end{proof}
In what follows we let $\mathfrak{S}_q$ and $\mathfrak{S}'_q$
stand for the reducts of $\mathfrak{S}$ and $\mathfrak{S}'$ to
$(\{1,2,3,4\}, \{ q\})$-stit models.

\begin{lemma}\label{L:b}
The relation $B := \{ (h_m, g_{m_1}) \mid (m, m_1 \in
4Tup),\,\&\,((\dag, h_m) \in V(q) \Leftrightarrow (\ddag, g_{m_1})
\in V'(q)) \}$ is a bisimulation between $(\mathfrak{S}_q, \dag)$
and $(\mathfrak{S}'_q, \ddag)$.
\end{lemma}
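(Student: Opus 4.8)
The plan is to verify directly the five requirements imposed by Definition \ref{bisimulation}: that the domain of $B$ equals $H^{\mathfrak{S}_q}_\dag$, that its counter-domain equals $H^{\mathfrak{S}'_q}_\ddag$, and the three clauses \eqref{atoms}, \eqref{forth}, \eqref{back}. Since we work in the reducts $\mathfrak{S}_q$ and $\mathfrak{S}'_q$, whose only propositional variable is $q$, the clause \eqref{atoms} is immediate: by the very definition of $B$, the statement $h_m \mathrel{B} g_{m_1}$ just says that $(\dag, h_m) \in V(q) \Leftrightarrow (\ddag, g_{m_1}) \in V'(q)$, which is exactly what \eqref{atoms} demands for the single variable $q$.

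The technical heart of the argument is a single \emph{richness} observation about the two $q$-valuations, which I would isolate and establish first by inspecting Definition \ref{D:choice}.3 and \ref{D:choice}.6: for each of the two models, each agent $j \in \{1,2,3,4\}$, and each value $v \in \{0,1\}$, among the moments $m \in 4Tup$ with $pr_j(m) = v$ there is one at which $q$ holds and one at which $q$ fails. For $\mathfrak{S}$ this is easy on the $q$-true side, since $sign(m) = +$ forces $(\dag, h_m) \in V(q)$ regardless of the projections; the $q$-false side requires $sign(m) = -$ together with a $1$ among $pr_1, pr_2$ and a $1$ among $pr_3, pr_4$, and one checks that for every $j$ and $v$ a suitable tuple is still available. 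For $\mathfrak{S}'$ one observes that the value of $q$ depends only on $pr_3, pr_4$ and $sign$, leaving $pr_1, pr_2$ free, so the cases $j \in \{1,2\}$ are trivial, while for $j \in \{3,4\}$ one reads off from Definition \ref{D:choice}.6 the four sign-dependent patterns for $(pr_3, pr_4) \in \{(0,0),(1,0),(0,1),(1,1)\}$ and confirms that under each fixed projection value both $q$-values remain realizable.

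With richness in hand the remaining clauses are routine. For the domain and counter-domain conditions, every $h_m$ (resp.\ every $g_{m_1}$) carries a definite $q$-value, and richness — in particular the fact that each model realizes both $q$-values at all — supplies a partner with the matching value, so each $h_m$ occurs as a first coordinate and each $g_{m_1}$ as a second coordinate of $B$. For \eqref{forth}, suppose $h_m \mathrel{B} g_{m'}$ and $h_{m_2} \in Choice^\dag_j(h_m)$; by Corollary \ref{L:choice-cor} this means $pr_j(m_2) = pr_j(m)$, and I let $t$ be the $q$-value of $h_{m_2}$. Applying richness to $\mathfrak{S}'$ with agent $j$, projection value $pr_j(m')$, and $q$-value $t$, I obtain an $m_3 \in 4Tup$ with $pr_j(m_3) = pr_j(m')$ and $g_{m_3}$ carrying $q$-value $t$; then $g_{m_3} \in (Choice')^\ddag_j(g_{m'})$ by Corollary \ref{L:choice-cor}, while $h_{m_2} \mathrel{B} g_{m_3}$ holds since both carry $q$-value $t$. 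The clause \eqref{back} is proved symmetrically, applying richness to $\mathfrak{S}$ instead. The only real work therefore lies in the finite but slightly fiddly verification of richness for $j \in \{3,4\}$ in $\mathfrak{S}'$, where the sign of the tuple interacts with the third and fourth projections; everything else is bookkeeping driven by Corollary \ref{L:choice-cor}.
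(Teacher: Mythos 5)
Your proposal is correct, but it is organized quite differently from the paper's own proof, and the difference is worth noting. The paper proves \eqref{forth} and \eqref{back} by a direct case analysis (same $q$-value vs.\ the two mismatched directions, then further subcases 2a--2d, 3a--3b keyed to whether the core of the matched tuple has the form $(a,b,0,0)$, $(a,b,1,0)$, $(0,0,a,b)$, etc., and to which agent $j$ is involved), constructing in each subcase an explicit witness tuple that preserves either the whole core or all but one coordinate. You instead factor everything through a single \emph{richness} lemma: in each model, for every agent $j$, every projection value $v$, and every truth value of $q$, some tuple with $pr_j = v$ realizes that truth value. Combined with Corollary \ref{L:choice-cor} --- which says choice-cell membership at the root depends \emph{only} on the $j$-th projection --- this makes \eqref{forth}, \eqref{back}, and the domain/counter-domain conditions entirely uniform; your witnesses need only match one coordinate rather than a whole core, and you never need the paper's Case 1 (reusing the matched history) as a separate case. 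Indeed your argument for \eqref{forth} does not even use the hypothesis $h_m \mathrel{B} g_{m'}$, which shows the models are rich enough that any target $q$-value can be matched inside any choice cell. What the paper's version buys is concreteness (explicit witnesses per configuration); what yours buys is a cleaner separation of the combinatorial content (a finite richness check, which I verified goes through for both valuations, including the sign-dependent patterns $(pr_3,pr_4)\in\{(0,0),(1,0),(0,1),(1,1)\}$ in $\mathfrak{S}'$) from the purely structural bisimulation bookkeeping, which reduces the opportunity for slips in the core-dependent subcases.
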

\begin{proof}
We first note that it follows from Definition \ref{D:choice}.6
that $(\ddag, g_{(0,0,0,0)^+}) \in V'(q)$ and $(\ddag,
g_{(0,0,1,0)^+}) \notin V'(q)$. Now if $m \in 4Tup$ then either
$(\dag, h_m) \in V(q)$ or $(\dag, h_m) \notin V(q)$. In the former
case, we get $h_m\mathrel{B}g_{(0,0,0,0)^+}$, in the latter case
we get $h_m\mathrel{B}g_{(0,0,1,0)^+}$. Therefore, by
\eqref{E:hists} and \eqref{E:hists-dag}, the domain of $B$ is $\{
h_m \mid m \in 4Tup \} = H^\mathfrak{S}_\dag$, as desired. As for
the counterdomain, we may argue in the same fashion, noting that
it follows from definition of $V$ that $(\dag, h_{(0,0,0,0)^+})
\in V(q)$ and $(\dag, h_{(0,1,0,1)^-}) \notin V(q)$. Thus, we also
get that the counterdomain of $B$ is $\{ g_m \mid m \in 4Tup \} =
H^{\mathfrak{S}'}_\ddag$.

The condition \eqref{atoms} from Definition \ref{bisimulation}
holds simply by definition of $B$. It remains to check the other
two conditions in this definition.

\emph{Condition \eqref{forth}}. Assume that $m_1, m_2, m_3 \in
4Tup$ and $1 \leq j \leq 4$ are such that we have both
$h_{m_1}\mathrel{B}g_{m_2}$ and $h_{m_3} \in
Choice^\dag_j(h_{m_1})$. We need to consider the following cases:

\emph{Case 1}. We have $(\dag, h_{m_1}) \in V(q) \Leftrightarrow
(\dag, h_{m_3}) \in V(q)$. Then note that we have both $g_{m_2}
\in Choice'^\ddag_j(g_{m_2})$ and $h_{m_3}\mathrel{B}g_{m_2}$, the
latter by definition of $B$.

\emph{Case 2}.  We have $(\dag, h_{m_1}) \in V(q)$, but $(\dag,
h_{m_3}) \notin V(q)$.

\emph{Case 2a}. We have $core(m_2) \neq (a,b,0,0)$ for all $a,b
\in \{ 0, 1\}$. Then we must have $(\ddag, g_{core(m_2)^-}) \notin
V'(q)$ so that $h_{m_3}\mathrel{B}g_{core(m_2)^-}$. On the other
hand, we have, by the identity of cores and Corollary
\ref{L:choice-cor}, that $g_{core(m_2)^-} \in
Choice'^\ddag_j(g_{m_2})$.

\emph{Case 2b}. We have $core(m_2) = (a,b,0,0)$ for some $a,b \in
\{ 0, 1\}$. %Consider $m_3$. Since $(\dag, h_{m_3}) \notin V(q))$,
%we must have $m_3 = (c,d,c',d)^-$ where both $(c,d)$ and $(c',d')$
%are different from $(0,0)$.
Now, if $j \in \{ 1,2,4 \}$ we note that for $m_4 := (a,b,1,0)^+$
we have $g_{m_4} \in Choice'^\ddag_j(g_{m_2})$ and also $(\ddag,
g_{m_4}) \notin V'(q)$ so that $h_{m_3}\mathrel{B}g_{m_4}$. On the
other hand, if $j = 3$, then we set $m_4 := (a,b,0,1)^-$ and,
again, get $g_{m_4} \in Choice'^\ddag_j(g_{m_2})$ and also
$(\ddag, g_{m_4}) \notin V'(q)$ so that
$h_{m_3}\mathrel{B}g_{m_4}$.

\emph{Case 3}.  We have $(\dag, h_{m_1}) \notin V(q)$, but $(\dag,
h_{m_3}) \in V(q)$. Then, by $h_{m_1}\mathrel{B}g_{m_2}$, also
$(\ddag, g_{m_2}) \notin V'(q)$ which means that  $core(m_2) \neq
(a,b,0,0)$ for all $a,b \in \{ 0, 1\}$.

\emph{Case 3a}. We have, moreover, that $core(m_2) \neq (a,b,1,0)$
for all $a,b \in \{ 0, 1\}$. Then we must have $(\ddag,
g_{core(m_2)^+}) \in V'(q)$ so that
$h_{m_3}\mathrel{B}g_{core(m_2)^+}$. On the other hand, we have,
by the identity of cores and Corollary \ref{L:choice-cor}, that
$g_{core(m_2)^+} \in Choice'^\ddag_j(g_{m_2})$.

\emph{Case 3b}. We have $core(m_2) = (a,b,1,0)$ for some $a,b \in
\{ 0, 1\}$. Now, if $j \in \{ 1,2,4 \}$ we note that for $m_4 :=
(a,b,0,0)^+$ we have $g_{m_4} \in Choice'^\ddag_j(g_{m_2})$ and
also $(\ddag, g_{m_4}) \in V'(q)$ so that
$h_{m_3}\mathrel{B}g_{m_4}$. On the other hand, if $j = 3$, then
we set $m_4 := (a,b,1,1)^+$ and, again, get $g_{m_4} \in
Choice'^\ddag_j(g_{m_2})$ and also $(\ddag, g_{m_4}) \in V'(q)$ so
that $h_{m_3}\mathrel{B}g_{m_4}$.

\emph{Condition \eqref{back}}. Assume that $m_1, m_2, m_3 \in
4Tup$ and $1 \leq j \leq 4$ are such that we have both
$h_{m_1}\mathrel{B}g_{m_2}$ and $g_{m_3} \in
Choice'^\ddag_j(g_{m_2})$. We need to consider the following
cases:

\emph{Case 1}. We have $(\ddag, g_{m_2}) \in V'(q) \Leftrightarrow
(\ddag, g_{m_3}) \in V'(q)$. Then note that we have both $h_{m_1}
\in Choice^\dag_j(h_{m_1})$ and $h_{m_1}\mathrel{B}g_{m_3}$, the
latter by definition of $B$.

\emph{Case 2}.  We have $(\ddag, g_{m_2}) \in V'(q)$, but $(\ddag,
g_{m_3}) \notin V'(q)$.

\emph{Case 2a}. For all $a,b \in \{ 0,1 \}$, we have both $m_1
\neq (a,b,0,0)$ and $m_1 \neq (0,0,a,b)$. Then we must have
$(\dag, h_{core(m_1)^-}) \notin V(q)$ so that
$h_{core(m_1)^-}\mathrel{B}g_{m_3}$. On the other hand, we have,
by the identity of cores, that $h_{core(m_1)^-} \in
Choice^\dag_j(h_{m_1})$.

\emph{Case 2b}. We have $core(m_1) = (0,0,0,0)$.  Now, if $j \in
\{ 1,3 \}$, we note that for $m_4 := (0,1,0,1)^-$ we have $g_{m_4}
\in Choice^\dag_j(h_{m_1})$ and also $(\dag, h_{m_4}) \notin V(q)$
so that $h_{(0,1,0,1)^-}\mathrel{B}g_{m_3}$. On the other hand, if
$j \in \{ 2,4 \}$, then we set $m_4 := (1,0,1,0)^-$ and, again,
get $h_{(1,0,1,0)^-} \in Choice^\dag_j(h_{m_1})$ and also $(\dag,
h_{(1,0,1,0)^-}) \notin V(q)$ so that
$h_{(1,0,1,0)^-}\mathrel{B}g_{m_3}$.

\emph{Case 2c}. We have $core(m_1) = (0,0,a,b)$ for some $a,b
 \in \{ 0,1 \}$ such that $(a,b) \neq (0,0)$. Then we have to instantiate $j$:

For $j = 1$, we set $m_4 := (0,1,a,b)^-$.

For $j \in \{ 2,3,4 \}$, we set $m_4 := (1,0,a,b)^-$.

Under these settings, we always get both $h_{m_4} \in
Choice^\dag_j(h_{m_1})$ for the respective $j$, and $(\dag,
h_{m_4}) \notin V(q)$ so that $h_{m_4}\mathrel{B}g_{m_3}$.

\emph{Case 2d}. We have $core(m_1) = (a,b,0,0)$ for some $a,b \in
\{ 0,1 \}$  such that $(a,b) \neq (0,0)$. Then we have to
instantiate $j$:

For $j \in \{ 1,2,3 \}$, we set $m_4 := (a,b,0,1)^-$.

For $j = 4$, we set $m_4 := (a,b,1,0)^-$.

Under these settings, we always get both $h_{m_4} \in
Choice^\dag_j(h_{m_1})$ for the respective $j$, and $(\dag,
h_{m_4}) \notin V(q)$ so that $h_{m_4}\mathrel{B}g_{m_3}$.

\emph{Case 3}. We have $(\ddag, g_{m_2}) \notin V'(q)$, but
$(\ddag, g_{m_3}) \in V'(q)$. Then we must have $(\dag,
h_{core(m_1)^+}) \in V(q)$ so that
$h_{core(m_1)^+}\mathrel{B}g_{m_3}$. On the other hand, we have,
by the identity of cores  and Corollary \ref{L:choice-cor}, that
$h_{core(m_1)^+} \in Choice^\dag_j(h_{m_1})$.
\end{proof}
We are now in a position to prove Theorem \ref{negative}.

\begin{proof}[Proof of Theorem \ref{negative}] Assume for
\emph{reductio}, that stit logic has $(RCIP)_n$ for some $n > 3$.
Then $n \geq 4$ and both $A: = \Diamond([1]p \wedge [2](p \to q))$
and $B: = \neg\Diamond([3]r \wedge [4](r \to \neg q))$ are in
$\mathcal{L}^{\{1,\ldots, n\}}_{\{ p,q,r \}}$. By Lemma
\ref{counterexample}, we have $\vdash A \to B$, therefore, by
Definition \ref{D:rcip}, there must be a $C \in
\mathcal{L}^{\{1,2,3,4\}}_{\{ q \}}$ such that both $\vdash A \to
C$ and $\vdash C \to B$. We choose such a $C$ and note that, by
Lemma \ref{satisfied}, we have $\mathfrak{S}, \dag,
h_{(0,0,0,0)^+} \models A$, therefore, by $\vdash A \to C$ and the
strong completeness of $\mathbb{S}$ w.r.t. stit logic, we must
also have $\mathfrak{S}, \dag, h_{(0,0,0,0)^+} \models C$. The
latter means that, moreover, $\mathfrak{S}_q, \dag,
h_{(0,0,0,0)^+} \models C$, since $C \in
\mathcal{L}^{\{1,2,3,4\}}_{\{ q \}}$. Note that it follows from
the definition of $B$ as given in Lemma \ref{L:b} that
$h_{(0,0,0,0)^+}\mathrel{B}g_{(0,0,0,0)^+}$, therefore, it follows
from Lemmas \ref{L:b} and \ref{L:bisimulation} that also
$\mathfrak{S}'_q, \ddag, g_{(0,0,0,0)^+} \models C$. Again, by the
fact that $C \in \mathcal{L}^{\{1,2,3,4\}}_{\{ q \}}$, we infer
that $\mathfrak{S}', \ddag, g_{(0,0,0,0)^+} \models C$, whence it
follows by $\vdash C \to B$, that we must also have
$\mathfrak{S}', \ddag, g_{(0,0,0,0)^+} \models B$. But the latter
is in contradiction with Lemma \ref{satisfied} which says that, on
the contrary, $\mathfrak{S}', \ddag, g_{(0,0,0,0)^+} \not\models
B$. So we have got our contradiction in place.
\end{proof}

\section{Further developments and ramifications}\label{S:further}

The main topic of this paper is the Restricted Interpolation
Property as given by Definition \ref{D:rcip}. This property is
much weaker than the simple Craig Interpolation Property which has
attracted much more attention in the existing literature, and for
a good reason. In the context of stit logic, we may formulate the
Craig Interpolation Property as follows:
\begin{definition}\label{D:craig}
Stit logic has the $n$-\emph{Craig Interpolation Property}
(abbreviated by $(CIP)_n$) iff for any set of propositional
variables $V$, and all $A, B \in \mathcal{L}^{\{1,\ldots, n\}}_V$,
whenever $\vdash A \to B$, then there exists a $C \in
\mathcal{L}^{Ag(A) \cup Ag(B)}_{|A| \cap |B|}$ such that both
$\vdash A \to C$ and $\vdash C \to B$.
\end{definition}
Then the relevance of the above results to this latter much more
important version of interpolation can be summed up in two
following corollaries:
\begin{corollary}\label{C:craig-restricted}
For all positive integers $n$, if stit logic does not have
$(RCIP)_n$, then stit logic does not have $(CIP)_n$.
\end{corollary}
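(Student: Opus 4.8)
The plan is to recognize that $(RCIP)_n$ is simply $(CIP)_n$ restricted to the subclass of provable implications whose antecedent and consequent share no agent names. Accordingly, I would prove the implication $(CIP)_n \Rightarrow (RCIP)_n$ and then contrapose to obtain the statement of the corollary.

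First I would place Definitions \ref{D:rcip} and \ref{D:craig} side by side and note that, in both of them, the sought interpolant $C$ is required to lie in exactly the same language $\mathcal{L}^{Ag(A) \cup Ag(B)}_{|A| \cap |B|}$ and to satisfy exactly the same two conditions $\vdash A \to C$ and $\vdash C \to B$. The sole difference between the two properties resides in their hypotheses: $(CIP)_n$ demands an interpolant for \emph{every} provable implication $A \to B$, whereas $(RCIP)_n$ demands one only under the additional assumption that $Ag(A) \cap Ag(B) = \emptyset$.

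From this the implication $(CIP)_n \Rightarrow (RCIP)_n$ is immediate. Assuming $(CIP)_n$, let $A, B \in \mathcal{L}^{\{1,\ldots, n\}}_V$ satisfy $\vdash A \to B$ together with $Ag(A) \cap Ag(B) = \emptyset$. Since this is in particular a provable implication, $(CIP)_n$ supplies a formula $C \in \mathcal{L}^{Ag(A) \cup Ag(B)}_{|A| \cap |B|}$ with both $\vdash A \to C$ and $\vdash C \to B$, which is precisely what $(RCIP)_n$ requires in this case. Contraposing, the failure of $(RCIP)_n$ forces the failure of $(CIP)_n$, as claimed. There is no genuine obstacle here; the one point deserving a moment's care is the observation that the two definitions impose \emph{identical} constraints on the interpolant, so that any witness produced by $(CIP)_n$ is automatically an admissible witness for $(RCIP)_n$.
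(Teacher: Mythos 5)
Your proposal is correct and is essentially the paper's own argument: the paper proves this corollary by noting it follows immediately from Definitions \ref{D:rcip} and \ref{D:craig}, and your write-up simply makes that comparison explicit, observing that both definitions impose the identical requirement $C \in \mathcal{L}^{Ag(A) \cup Ag(B)}_{|A| \cap |B|}$ with $\vdash A \to C$ and $\vdash C \to B$, so that $(CIP)_n$ implies $(RCIP)_n$ and the corollary follows by contraposition.
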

\begin{proof}
Immediately from Definition \ref{D:rcip} and Definition
\ref{D:craig}.
\end{proof}
\begin{corollary}\label{C:craig}
For all $n > 3$, stit logic does not have $(CIP)_n$.
\end{corollary}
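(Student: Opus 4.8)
The plan is to obtain this statement as an immediate consequence of the two results that precede it, with no further model construction or syntactic derivation required. All the technical work has in fact already been carried out in the proof of Theorem \ref{negative}, where the two models $\mathfrak{S}$ and $\mathfrak{S}'$ together with the bisimulation $B$ jointly witness the failure of $(RCIP)_n$ for every $n > 3$. The present corollary merely transfers that failure from the restricted to the unrestricted version of interpolation.

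Concretely, I would fix an arbitrary $n > 3$ and first invoke Theorem \ref{negative} to conclude that stit logic fails to have $(RCIP)_n$. Then I would apply Corollary \ref{C:craig-restricted}, whose conditional form states that the failure of $(RCIP)_n$ entails the failure of $(CIP)_n$ for the same index $n$; instantiating that corollary at our fixed $n$ yields exactly the desired conclusion. Since $n > 3$ was chosen arbitrarily, the claim holds for all $n > 3$.

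There is no genuine obstacle at this stage. The only fact being used is that $(RCIP)_n$ is a literal weakening of $(CIP)_n$: comparing Definition \ref{D:rcip} with Definition \ref{D:craig}, the restricted property drops the demand for an interpolant in every case except those satisfying $Ag(A) \cap Ag(B) = \emptyset$. Hence possession of the stronger property $(CIP)_n$ would force possession of the weaker property $(RCIP)_n$, and by contraposition the failure of the weaker property propagates upward to the stronger one. This is precisely the content of Corollary \ref{C:craig-restricted}, which itself follows by inspection of the two definitions. The chaining is therefore entirely routine, and the whole substance of the negative result continues to reside in the model-theoretic argument of Theorem \ref{negative}.
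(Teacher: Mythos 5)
Your proposal is correct and follows exactly the paper's own argument: the corollary is obtained immediately by chaining Theorem \ref{negative} (failure of $(RCIP)_n$ for $n > 3$) with Corollary \ref{C:craig-restricted} (failure of $(RCIP)_n$ implies failure of $(CIP)_n$). Your additional remark that $(RCIP)_n$ is a literal weakening of $(CIP)_n$, so the implication follows by contraposition from comparing Definitions \ref{D:rcip} and \ref{D:craig}, is precisely the content of the paper's proof of Corollary \ref{C:craig-restricted} as well.
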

\begin{proof}
Immediately from Corollary \ref{C:craig-restricted} and Theorem
\ref{negative}.
\end{proof}
Thus we may infer from the results of the above sections that stit
logic fails $(CIP)_n$ for \emph{almost} all positive integers $n$.
The failure of $(CIP)_n$ further entails, by the standard
argument, the failure of the Robinson Consistency Property for the
respective values of $n$. Furthermore, Theorem \ref{positive}
allows us to considerably limit our search for counterexamples to
$(CIP)_n$ for the remaining few values of $n$. Namely, it follows
from Theorem \ref{positive} that whenever $\vdash A \to B$ does
not have an interpolant in the sense of Definition \ref{D:craig},
then we must have $Ag(A) \cap Ag(B) \neq \emptyset$.

Turning again to the Robinson Consistency Property and its
variants, Definition \ref{D:rcip} raises a natural question
whether $(RCIP)_n$ has its accompanying restricted version of the
Robinson Consistency Property. The answer is yes, and the
respective version of the Robinson Consistency Property can be
formulated as follows:
\begin{definition}\label{D:robinson}
Stit logic has the \emph{Restricted} $n$-\emph{Robinson
Consistency Property} (abbreviated by $(RRCP)_n$) iff for any set
of propositional variables $V$, and all $\Gamma, \Delta \subseteq
\mathcal{L}^{\{1,\ldots, n\}}_V$, if $(\Gamma, \Delta)$ is
inseparable, then $\Gamma \cup \Delta$ is consistent.
\end{definition}
On the basis of this definition and the proofs given in Sections
\ref{S:positive} and \ref{S:negative}, the following theorem can
be established:
\begin{theorem}\label{T:robinson}
For every positive integer $n$, stit logic has $(RRCP)_n$ iff it
has $(RCIP)_n$.
\end{theorem}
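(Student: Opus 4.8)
The plan is to establish the two implications separately, both by contraposition, relying only on the definitions of (in)separability, $(RCIP)_n$, and $(RRCP)_n$, together with the compactness of stit logic noted after the axiomatization $\mathbb{S}$. None of the heavy model-theoretic constructions of Sections \ref{S:positive} and \ref{S:negative} will be needed: the equivalence is the standard abstract translation between an interpolation property and its companion Robinson consistency property.

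First I would prove that $(RCIP)_n$ implies $(RRCP)_n$. Let $(\Gamma, \Delta)$ be inseparable, so in particular $Ag(\Gamma) \cap Ag(\Delta) = \emptyset$, and suppose toward a contradiction that $\Gamma \cup \Delta$ is inconsistent. By the definition of $\vdash$ for sets and by compactness, there are $A_1, \ldots, A_r \in \Gamma$ and $B_1, \ldots, B_s \in \Delta$ with $\vdash (A_1 \wedge \ldots \wedge A_r \wedge B_1 \wedge \ldots \wedge B_s) \to \bot$; writing $A := A_1 \wedge \ldots \wedge A_r$ and $B := B_1 \wedge \ldots \wedge B_s$ (with the convention that an empty conjunction abbreviates a provable formula such as $\bot \to \bot$), this yields $\vdash A \to \neg B$. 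Since $Ag(A) \subseteq Ag(\Gamma)$ and $Ag(\neg B) = Ag(B) \subseteq Ag(\Delta)$ are disjoint, $(RCIP)_n$ applies and delivers a $C \in \mathcal{L}^{Ag(A) \cup Ag(B)}_{|A| \cap |B|}$ with $\vdash A \to C$ and $\vdash C \to \neg B$. As $|A| \cap |B| \subseteq |\Gamma| \cap |\Delta|$ and $Ag(A) \cup Ag(B) \subseteq Ag(\Gamma) \cup Ag(\Delta)$, this $C$ already lies in $\mathcal{L}^{Ag(\Gamma) \cup Ag(\Delta)}_{|\Gamma| \cap |\Delta|}$; moreover $\Gamma \vdash C$, and rewriting $\vdash C \to \neg B$ as $\vdash B \to \neg C$ gives $\Delta \vdash \neg C$. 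Hence $C$ separates $(\Gamma, \Delta)$, contradicting inseparability, so $\Gamma \cup \Delta$ must be consistent.

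Next I would prove that $(RRCP)_n$ implies $(RCIP)_n$. Given $\vdash A \to B$ with $Ag(A) \cap Ag(B) = \emptyset$, I would consider the pair $(\{A\}, \{\neg B\})$. Because $\vdash A \to B$, the set $\{A\} \cup \{\neg B\} = \{A, \neg B\}$ is inconsistent, so by the contrapositive of $(RRCP)_n$ the pair $(\{A\}, \{\neg B\})$ fails to be inseparable. Here the decisive observation is that $Ag(\{A\}) \cap Ag(\{\neg B\}) = Ag(A) \cap Ag(B) = \emptyset$, so the agent-disjointness clause in the definition of inseparability is met; the failure of inseparability can therefore only stem from the existence of a separating formula. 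That is, there is a $C \in \mathcal{L}^{Ag(A) \cup Ag(B)}_{|A| \cap |B|}$ with $\{A\} \vdash C$ and $\{\neg B\} \vdash \neg C$, i.e. $\vdash A \to C$ and $\vdash \neg B \to \neg C$; rewriting the second as $\vdash C \to B$ exhibits $C$ as exactly the interpolant demanded by $(RCIP)_n$.

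I do not expect any serious obstacle, since both directions reduce to one application of the hypothesis after a routine reshuffling of formulas. The one place that calls for genuine care is the bookkeeping forced by the definition of (in)separability, which bundles the agent-disjointness condition $Ag(\Gamma) \cap Ag(\Delta) = \emptyset$ together with the non-existence of a separating formula. In the direction $(RRCP)_n \Rightarrow (RCIP)_n$ one must check explicitly that agent-disjointness already holds for $(\{A\}, \{\neg B\})$, so that the only remaining route to separability is a separating formula; and in both directions one must verify that the language constraints on $C$ — the shared variables $|A| \cap |B|$ and the combined agent set $Ag(A) \cup Ag(B)$ — line up on the interpolation side and the separability side, which they do since forming finite conjunctions can only shrink the relevant variable and agent sets.
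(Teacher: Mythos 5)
Your proof is correct, but only half of it follows the paper's route. The direction from $(RRCP)_n$ to $(RCIP)_n$ is exactly the ``standard argument'' that the paper's sketch invokes without spelling out: pass to the pair $(\{A\},\{\neg B\})$, observe that agent-disjointness holds so that failure of inseparability can only mean the existence of a separating formula, and read that formula off as the interpolant. Your other direction, however, is genuinely different. The paper never derives $(RCIP)_n \Rightarrow (RRCP)_n$ abstractly: its sketch obtains $(RRCP)_n$ for $n \leq 3$ by rerunning the model-theoretic construction behind Theorem \ref{positive} (extend an arbitrary inseparable pair to a maxiconsistent inseparable pair via Lemma \ref{inseparable1} and build a model of the union), while for $n > 3$ the implication holds vacuously because Theorem \ref{negative} shows $(RCIP)_n$ fails there; so the paper's equivalence rests on the classification of both properties. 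You instead prove the implication uniformly in $n$: the inconsistency of $\Gamma \cup \Delta$ is witnessed, by the finitary definition of $\vdash$ (equivalently, compactness), by finite conjunctions $A$ and $B$ with $\vdash A \to \neg B$, and the $(RCIP)_n$-interpolant for this implication separates $(\Gamma, \Delta)$, the inclusions $Ag(A) \cup Ag(B) \subseteq Ag(\Gamma) \cup Ag(\Delta)$ and $|A| \cap |B| \subseteq |\Gamma| \cap |\Delta|$ being exactly what is needed. What your route buys is self-containment and generality: the equivalence becomes a purely abstract fact about restricted interpolation versus restricted Robinson consistency in any compact logic with these definitions, independent of Theorems \ref{positive} and \ref{negative}. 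What the paper's route buys instead is the extra concrete information that $(RRCP)_n$ itself holds precisely for $n \leq 3$, which is what its concluding discussion actually uses. Your explicit handling of the one real pitfall --- that ``not inseparable'' splits into failure of agent-disjointness or existence of a separating formula, and that the former alternative is excluded for $(\{A\},\{\neg B\})$ --- is exactly the bookkeeping the abstract argument requires.
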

\begin{proof}[Proof (a sketch)]
By a standard argument, one can show that whenever stit logic
fails $(RCIP)_n$, it also fails $(RRCP)_n$. In the other
direction, an obvious modification of the proof of Theorem
\ref{positive} given above shows that stit logic has $(RRCP)_n$
for all $n \leq 3$.
\end{proof}
Finally, we tackle the question of the logical status of action
modalities. Definition \ref{D:rcip} treats action modalities of
the form $[j]$ for a $j \in Ag$ as logical symbols, and this is in
accordance with the standard view of modalities. But it is easy to
see that one can also argue in favor of non-logical status of
these modalities, since the agent indices are often treated as
proper names of respective agents, and proper names are
non-logical. If this attitude is carried out systematically, then
we get the following strengthening of Definition \ref{D:rcip}:
\begin{definition}\label{D:strongrestricted}
Stit logic has the \emph{Strong Restricted }$n$-\emph{Craig
Interpolation Property} (abbreviated by $(SRCIP)_n$) iff for any
set of propositional variables $V$, and all $A, B \in
\mathcal{L}^{\{1,\ldots, n\}}_V$, whenever $\vdash A \to B$ and
$Ag(A) \cap Ag(B) = \emptyset$, then there exists a $C \in
\mathcal{L}^{\emptyset}_{|A| \cap |B|}$ such that both $\vdash A
\to C$ and $\vdash C \to B$.
\end{definition}
One immediately sees that $(SRCIP)_n$ only differs from $(RCIP)_n$
in placing stricter requirements on the interpolant. Therefore,
for any given positive integer $n$, the failure of $(RCIP)_n$ for
stit logic entails the failure of $(SRCIP)_n$ so that it follows
from Theorem \ref{negative} that stit logic fails $(SRCIP)_n$ for
all positive integers $n > 3$. This result, however, can be
improved as follows:
\begin{theorem}\label{strong-negative}
For every $n > 1$, stit logic does not have $(SRCIP)_n$.
\end{theorem}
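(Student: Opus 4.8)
The plan is to give a counterexample already for two agents and then pad it out to any $n>1$. Set $A:=\Diamond[1]q$ and $B:=\neg\Diamond[2]\neg q$, so that $Ag(A)=\{1\}$ and $Ag(B)=\{2\}$ are disjoint while $|A|\cap|B|=\{q\}$; both formulas lie in $\mathcal{L}^{\{1,\ldots,n\}}_{\{q\}}$ for every $n\geq 2$. First I would check that $\vdash A\to B$. This is the two-agent analogue of Lemma \ref{counterexample}: by \eqref{A3} for the distinct agents $1,2$ we have $\vdash(\Diamond[1]q\wedge\Diamond[2]\neg q)\to\Diamond([1]q\wedge[2]\neg q)$, while the reflexivity (S5) properties of $[1]$ and $[2]$ give $\vdash([1]q\wedge[2]\neg q)\to(q\wedge\neg q)$, hence $\vdash([1]q\wedge[2]\neg q)\to\bot$; applying \eqref{Nec} and the S5 properties of $\Box$ yields $\vdash\Diamond([1]q\wedge[2]\neg q)\to\bot$, and combining the displayed implications gives $\vdash\neg(\Diamond[1]q\wedge\Diamond[2]\neg q)$, i.e. $\vdash A\to B$.

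The decisive observation, and what makes this counterexample far cheaper than the one in Theorem \ref{negative}, is that in $(SRCIP)_n$ the interpolant is required to lie in $\mathcal{L}^{\emptyset}_{\{q\}}$, i.e. to be agent-free. Consequently I need not fool an interpolant that can inspect the choice structure of all agents; it suffices to build two pointed models differing on $A$ versus $B$ yet related by a bisimulation in the sense of Definition \ref{bisimulation} taken with $Ag=\emptyset$, for which the \eqref{forth} and \eqref{back} clauses are vacuous and only \eqref{atoms} for $q$ survives. Concretely I would build a two-agent model $\mathfrak{S}$ whose root $\dag$ lies below two leaves and so carries exactly two histories $h_a\models q$, $h_b\models\neg q$, with $Choice^\dag_1=\{\{h_a\},\{h_b\}\}$ and $Choice^\dag_2=\{\{h_a,h_b\}\}$; then $\mathfrak{S},\dag,h_a\models A$, because the cell $\{h_a\}$ witnesses $[1]q$. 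Dually I would build $\mathfrak{S}'$ whose root $\ddag$ carries $g_a\models q$, $g_b\models\neg q$, with the roles of the agents swapped, $Choice'^\ddag_1=\{\{g_a,g_b\}\}$ and $Choice'^\ddag_2=\{\{g_a\},\{g_b\}\}$; then $\mathfrak{S}',\ddag,g_a\not\models B$, since the cell $\{g_b\}$ witnesses $\Diamond[2]\neg q$. Both structures satisfy \eqref{IA}: in each, one agent has only the improper choice, so every cross-intersection is nonempty. To pass from two agents to arbitrary $n>1$ I would add agents $3,\ldots,n$ with the single improper choice at every moment, which leaves \eqref{IA} intact and alters neither the truth values of $A,B$ nor the induced $\emptyset$-reducts.

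I would then let $\mathfrak{S}_\emptyset$ and $\mathfrak{S}'_\emptyset$ be the reducts to $(\emptyset,\{q\})$-stit models and take $\beta:=\{(h_a,g_a),(h_b,g_b)\}$. Its domain is $H^{\mathfrak{S}}_\dag$, its counter-domain is $H^{\mathfrak{S}'}_\ddag$, and \eqref{atoms} holds for $q$ by construction, so $\beta$ is a bisimulation and Lemma \ref{L:bisimulation} gives, for every $C\in\mathcal{L}^{\emptyset}_{\{q\}}$, that $\mathfrak{S}_\emptyset,\dag,h_a\models C$ iff $\mathfrak{S}'_\emptyset,\ddag,g_a\models C$; since such $C$ contains no action modality, this truth value is unaffected by the reduct, so the same equivalence holds in $\mathfrak{S}$ and $\mathfrak{S}'$. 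Now suppose for \emph{reductio} that $(SRCIP)_n$ holds. Applying Definition \ref{D:strongrestricted} to $\vdash A\to B$ yields an interpolant $C\in\mathcal{L}^{\emptyset}_{\{q\}}$ with $\vdash A\to C$ and $\vdash C\to B$; by soundness of $\mathbb{S}$, from $\mathfrak{S},\dag,h_a\models A$ we get $\mathfrak{S},\dag,h_a\models C$, while from $\mathfrak{S}',\ddag,g_a\not\models B$ we get $\mathfrak{S}',\ddag,g_a\not\models C$, contradicting the equivalence just established. I expect the only delicate points to be the bookkeeping that the two small models genuinely satisfy \eqref{IA} after padding, together with the explicit verification that $A$ holds in $\mathfrak{S}$ and that $B$ fails in $\mathfrak{S}'$; the heart of the argument, namely that agent-freeness of the interpolant collapses the required bisimulation to its trivial $Ag=\emptyset$ form, is immediate.
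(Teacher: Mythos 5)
Your proposal is correct and follows essentially the same route as the paper: the same counterexample implication $\Diamond[1]q \to \neg\Diamond[2]\neg q$ (Lemma \ref{s-counterexample}), the same tiny one-root/two-history frames with the roles of agents $1$ and $2$ swapped, and the same key observation that an agent-free interpolant only sees the \eqref{atoms} condition. The only (harmless) packaging difference is that you keep both models over the full agent set $\{1,\ldots,n\}$ and apply Lemma \ref{L:bisimulation} to the $(\emptyset,\{q\})$-reducts, whereas the paper works with two single-agent models over different agent sets and proves the adapted Lemma \ref{L:s-bisimulation} for that purpose; your explicit padding with vacuous choices also makes the appeal to soundness across agent sets slightly more self-contained.
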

In order to prove this theorem, we again need to establish a
number of technical claims:
\begin{lemma}\label{s-counterexample}
Let $j_1, j_2 \in Ag$ be different and let $p$ be a propositional
variable. Then:
$$
\vdash \Diamond[j_1]p \to \neg\Diamond[j_2]\neg p.
$$
\end{lemma}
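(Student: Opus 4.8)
The plan is to reproduce in miniature the argument of Lemma \ref{counterexample}, now invoking only the two-agent instance of the independence axiom \eqref{A3}. The whole point is that two distinct agents can independently realize incompatible guarantees, which is precisely what \eqref{A3} forbids when those guarantees are jointly contradictory. Since $j_1 \neq j_2$ by hypothesis, I would first instantiate \eqref{A3} at the two conjuncts $\Diamond[j_1]p$ and $\Diamond[j_2]\neg p$ to obtain
$$
(\Diamond[j_1]p \wedge \Diamond[j_2]\neg p) \to \Diamond([j_1]p \wedge [j_2]\neg p).
$$

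Next I would exploit the fact that each $[j]$ is an S5 modality, established in Lemma \ref{technical}.1; in particular, each validates the reflexivity principle $[j]A \to A$. Applying this to the two conjuncts gives $[j_1]p \to p$ and $[j_2]\neg p \to \neg p$, so that
$$
([j_1]p \wedge [j_2]\neg p) \to (p \wedge \neg p),
$$
and hence $([j_1]p \wedge [j_2]\neg p) \to \bot$ by propositional logic. I would then feed this through \eqref{Nec} and the S5 properties of $\Box$ — concretely, the equivalence $\Diamond\bot \leftrightarrow \bot$ — to conclude $\Diamond([j_1]p \wedge [j_2]\neg p) \to \bot$.

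Combining this with the first displayed implication yields $(\Diamond[j_1]p \wedge \Diamond[j_2]\neg p) \to \bot$, from which the desired statement $\Diamond[j_1]p \to \neg\Diamond[j_2]\neg p$ follows immediately by propositional logic. I expect no genuine obstacle here: the lemma is essentially the degenerate case of Lemma \ref{counterexample} in which the two \emph{composite} actions are replaced by single atomic stit claims about $p$ and $\neg p$. The only points requiring care are that the application of \eqref{A3} is licensed precisely by the assumption $j_1 \neq j_2$, and that the collapse of $\Diamond\bot$ to $\bot$ rests on the S5 behavior of $\Box$ rather than on any property of the action modalities.
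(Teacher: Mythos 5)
Your proposal is correct and follows exactly the same route as the paper's own proof: instantiate \eqref{A3} for the two distinct agents, collapse $[j_1]p \wedge [j_2]\neg p$ to $\bot$ via the T-properties of the action modalities, lift this through $\Box$ (via \eqref{Nec}) to refute the diamond, and finish by propositional logic. The extra detail you supply (explicit reflexivity instances, the $\Diamond\bot \leftrightarrow \bot$ collapse) just makes explicit what the paper compresses into its four displayed lines.
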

\begin{proof}
We reason as follows:
\begin{align}
    &(\Diamond[j_1]p \wedge \Diamond[j_2]\neg p) \to \Diamond([j_1]p \wedge [j_2]\neg
    p)\label{E:sce1}&&\text{(by \eqref{A3})}\\
&([j_1]p \wedge [j_2]\neg p) \to
\bot\label{E:sce2}&&\text{($[j_1]$, $[j_2]$ are S5)}\\
&\Diamond([j_1]p \wedge [j_2]\neg p) \to \bot\label{E:sce3}&&\text{(from \eqref{E:sce2} since $\Box$ is S5)}\\
 &(\Diamond[j_1]p \wedge \Diamond[j_2]\neg p) \to \bot\label{E:sce4}&&\text{(from \eqref{E:sce1} and \eqref{E:sce3})}
\end{align}
\end{proof}
\begin{lemma}\label{L:s-bisimulation}
Let $\mathfrak{S} = \langle Tree, \leq, Choice, V\rangle$ and
$\mathfrak{S}' = \langle Tree', \leq', Choice', V'\rangle$ be an
$(Ag, V)$-stit model and an $(Ag', V)$-stit model, respectively,
and let $m \in Tree$ and $m' \in Tree'$. Let relation $B \subseteq
H^\mathfrak{S}_m \times H^{\mathfrak{S}'}_{m'}$ be such that the
domain of $B$ is $H^\mathfrak{S}_m$, the counter-domain of $B$ is
$H^{\mathfrak{S}'}_{m'}$, and assume that $B$ satisfies condition
\eqref{atoms}. Then, whenever $A \in \mathcal{L}^\emptyset_V$, we
will have, for all $h_1 \in H^\mathfrak{S}_m$ and $h'_1 \in
H^{\mathfrak{S}'}_{m'}$:
$$
h_1\mathrel{B}h'_1 \Rightarrow (\mathfrak{S}, m, h_1 \models A
\Leftrightarrow \mathfrak{S}', m', h'_1 \models A).
$$
\end{lemma}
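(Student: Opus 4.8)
The plan is to prove this by induction on the construction of $A \in \mathcal{L}^\emptyset_V$, closely following the proof of Lemma \ref{L:bisimulation}, and to observe that the restriction to the empty agent community is precisely what allows us to dispense with the \eqref{forth} and \eqref{back} clauses. The crucial point is that formulas in $\mathcal{L}^\emptyset_V$ are built from propositional variables using only the Boolean connectives and $\Box$; no action modality $[j]$ can occur in them. Consequently, the induction never reaches the case $A = [j]B$, which was the only place in the proof of Lemma \ref{L:bisimulation} where the conditions \eqref{forth} and \eqref{back} were invoked. Thus the weaker hypothesis on $B$ — that it satisfies \eqref{atoms} and has domain $H^\mathfrak{S}_m$ and counter-domain $H^{\mathfrak{S}'}_{m'}$ — is all that is needed. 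The fact that $\mathfrak{S}$ and $\mathfrak{S}'$ may be built over different agent communities $Ag$ and $Ag'$ is likewise harmless, since the semantic clauses for propositional variables and for $\Box$ make no reference to the choice functions or to the agents.

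For the base case $A = p$, I would appeal directly to \eqref{atoms}, which is assumed to hold for $B$. The Boolean cases in the induction step are immediate, as they commute with the biconditional supplied by the induction hypothesis. The only modal case is $A = \Box B$, and here I would reproduce Case 1 of the proof of Lemma \ref{L:bisimulation} essentially verbatim. In the ($\Rightarrow$) direction, assuming $\mathfrak{S}, m, h_1 \models \Box B$ and taking an arbitrary $h'_2 \in H^{\mathfrak{S}'}_{m'}$, I would use the fact that the counter-domain of $B$ is $H^{\mathfrak{S}'}_{m'}$ to select an $h_2 \in H^\mathfrak{S}_m$ with $h_2 \mathrel{B} h'_2$; then $\mathfrak{S}, m, h_2 \models B$ gives $\mathfrak{S}', m', h'_2 \models B$ by the induction hypothesis, and since $h'_2$ was arbitrary we conclude $\mathfrak{S}', m', h'_1 \models \Box B$. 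The ($\Leftarrow$) direction is symmetric, this time using that the domain of $B$ is $H^\mathfrak{S}_m$.

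I do not expect any real obstacle here: the content of the lemma lies entirely in the recognition that action-modality-free formulas never trigger the clauses of a bisimulation that concern choice cells, so that a mere surjective relation respecting the atomic valuations already preserves satisfaction of every formula of $\mathcal{L}^\emptyset_V$. The only point requiring a modicum of care is to verify that the $\Box$-case argument genuinely uses nothing beyond the domain and counter-domain conditions — which a glance at Case 1 of Lemma \ref{L:bisimulation} confirms — and this is exactly why the present lemma can drop the \eqref{forth} and \eqref{back} requirements while retaining the conclusion for the fragment without agents.
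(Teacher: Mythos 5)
Your proof is correct and is essentially identical to the paper's own argument: the paper likewise proves the lemma by repeating the induction from Lemma \ref{L:bisimulation}, noting that the case $A = [j]B$ (Case 2) can simply be omitted for formulas in $\mathcal{L}^\emptyset_V$, so that only \eqref{atoms} and the domain/counter-domain conditions are needed.
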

\begin{proof}
We reason in the same way as in the proof of Lemma
\ref{L:bisimulation}, the only difference being that Case 2 in the
induction step can be omitted.
\end{proof}
We are now in a position to prove Theorem \ref{strong-negative}.
\begin{proof}[Proof of Theorem \ref{strong-negative}]
Consider the following sets and structures:
\begin{itemize}
\item $Tr = \{ m, m_0, m_1 \}$.

\item $\unlhd$ is the reflexive closure of the relation $\{ (m,
m_0), (m, m_1) \}$.
\end{itemize}
The two histories induced by $(Tr, \unlhd)$ are $h_0 = \{ m, m_0
\}$ and $h_1 = \{ m, m_1 \}$. We now define two further sets:
\begin{itemize}
\item $U = \{ (m, h_0) \}$.

\item $F = \{ (m, \{\{h_0\},\{h_1\}\}), (m_0, \{\{h_0\}\}), (m_1,
\{\{h_1\}\})\}$.
\end{itemize}
It is immediate to establish that the structure $\mathfrak{M}_{j,
p} = (Tr, \unlhd, F_j, U_p)$, in which if $F_j$ interprets $F$ as
the choice function for a given single agent $j$ and $U_p$
interprets $U$ as the evaluation for a given single propositional
variable $p$, is a $(\{j\},\{p\})$-stit structure.

We now consider two stit models, $\mathfrak{M}_{1, p}$ and
$\mathfrak{M}_{2, p}$, and we set $B$ as the diagonal of $Hist(Tr,
\unlhd)$, in other words, we set $B: = \{(h_0,h_0), (h_1,h_1)\}$.
It is clear that $B$ satisfies the conditions of Lemma
\ref{L:s-bisimulation} so that for every $C \in
\mathcal{L}^\emptyset_{\{p\}}$ which contains no action
modalities, we will have:
\begin{equation}\label{E:new}
    \mathfrak{M}_{1, p}, m, h_0 \models C \Leftrightarrow \mathfrak{M}_{2, p}, m, h_0 \models
    C.
\end{equation}
Now assume that  $(SRCIP)_n$ holds for any $n$ greater than one.
We will show that this assumption leads to a contradiction.
Indeed, it follows then from Lemma \ref{s-counterexample} that
there must be a formula $C \in \mathcal{L}^\emptyset_{\{p\}}$ such
that the following holds:
\begin{align}
    &\vdash \Diamond[1]p \to C\label{E:new1}\\
    &\vdash C \to \neg\Diamond[2]\neg p\label{E:new2}
\end{align}
Choose any such $C$. We obviously have $\mathfrak{M}_{1, p}, m,
h_0 \models \Diamond[1]p$ so that it follows from \eqref{E:new1}
and the soundness of $\mathbb{S}$ that $\mathfrak{M}_{1, p}, m,
h_0 \models C$, whence, by \eqref{E:new}, also $\mathfrak{M}_{2,
p}, m, h_0 \models C$. From the latter, together with
\eqref{E:new2}, it follows that we should have $\mathfrak{M}_{2,
p}, m, h_0 \models \neg\Diamond[2]\neg p$, whereas the direct
check shows that we in fact have $\mathfrak{M}_{2, p}, m, h_0
\models \Diamond[2]\neg p$. Thus we have got our contradiction in
place.
\end{proof}
The Strong Restricted Craig Interpolation Property admits of the
following unrestricted companion:
\begin{definition}\label{D:strongcraig}
Stit logic has the \emph{Strong }$n$-\emph{Craig Interpolation
Property} (abbreviated by $(SCIP)_n$) iff for any set of
propositional variables $V$, and all $A, B \in
\mathcal{L}^{\{1,\ldots, n\}}_V$, there exists a $C \in
\mathcal{L}^{Ag(A) \cap Ag(B)}_{|A| \cap |B|}$ such that both
$\vdash A \to C$ and $\vdash C \to B$.
\end{definition}
Of course, for a given positive integer $n$, $(SCIP)_n$ is at
least as strong as $(SRCIP)_n$, whence we get the following
corollary to Theorem \ref{strong-negative}:
\begin{theorem}\label{strong-craig-negative}
For every $n > 1$, stit logic does not have $(SCIP)_n$.
\end{theorem}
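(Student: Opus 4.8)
The plan is to obtain this as an immediate corollary of Theorem~\ref{strong-negative}, by verifying that, for each fixed $n$, the property $(SCIP)_n$ is at least as strong as $(SRCIP)_n$. Once this is established, the theorem follows by contraposition, since Theorem~\ref{strong-negative} already tells us that $(SRCIP)_n$ fails for every $n > 1$.

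The key step is thus the implication $(SCIP)_n \Rightarrow (SRCIP)_n$. To see it, I would suppose that $(SCIP)_n$ holds and take an arbitrary pair $A, B \in \mathcal{L}^{\{1,\ldots,n\}}_V$ with $\vdash A \to B$ and $Ag(A) \cap Ag(B) = \emptyset$. Applying $(SCIP)_n$ to this pair yields an interpolant $C \in \mathcal{L}^{Ag(A) \cap Ag(B)}_{|A| \cap |B|}$ with $\vdash A \to C$ and $\vdash C \to B$. The crucial observation is that the disjointness hypothesis $Ag(A) \cap Ag(B) = \emptyset$ forces $\mathcal{L}^{Ag(A) \cap Ag(B)}_{|A| \cap |B|} = \mathcal{L}^{\emptyset}_{|A| \cap |B|}$, so that $C$ already lies in exactly the language demanded by Definition~\ref{D:strongrestricted}. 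Hence $C$ witnesses $(SRCIP)_n$ for $(A,B)$, and as the pair was arbitrary among those with disjoint agent sets, $(SRCIP)_n$ follows.

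Combining these remarks, the failure of $(SRCIP)_n$ entails the failure of $(SCIP)_n$ for every fixed $n$; and since Theorem~\ref{strong-negative} establishes that $(SRCIP)_n$ fails whenever $n > 1$, we conclude that $(SCIP)_n$ fails for all $n > 1$ as well. There is essentially no obstacle to overcome here: the entire argument is a one-line language-inclusion observation once Theorem~\ref{strong-negative} is in hand. The only point worth stating explicitly is the coincidence of the two interpolant languages in the disjoint-agent case, which reduces to the trivial fact that $\mathcal{L}^{Ag(A) \cap Ag(B)}_{W} = \mathcal{L}^{\emptyset}_{W}$ whenever $Ag(A) \cap Ag(B) = \emptyset$.
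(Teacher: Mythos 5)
Your proposal is correct and is exactly the paper's argument: the paper obtains Theorem \ref{strong-craig-negative} as an immediate corollary of Theorem \ref{strong-negative}, remarking only that $(SCIP)_n$ is at least as strong as $(SRCIP)_n$. Your explicit verification of that implication---namely, that $Ag(A) \cap Ag(B) = \emptyset$ forces $\mathcal{L}^{Ag(A) \cap Ag(B)}_{|A| \cap |B|} = \mathcal{L}^{\emptyset}_{|A| \cap |B|}$, so the $(SCIP)_n$ interpolant already witnesses $(SRCIP)_n$---is precisely the step the paper leaves implicit.
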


\section{Conclusion}\label{conclusion}
In the preceding text, we have looked into the question of whether
stit logic has the Restricted $n$-Craig Interpolation Property,
showing that the answer is in the affirmative iff $n \leq 3$. We
have also briefly looked into some related properties, showing
that the Restricted Craig Interpolation for stit logic has its
natural accompanying version of the Robinson Consistency Property
which turns out to be equivalent to the Restricted Craig
Interpolation for every positive integer $n$. From these results,
we have drawn the corollary that the unrestricted $n$-Craig
Interpolation fails for stit logic under every instantiation of $n
> 3$, that is to say, for almost all positive integers $n$. We
have also shown that if one treats action modalities as
non-logical symbols, the scope of interpolation failures extends
to include the case when $n \in \{2, 3\}$, and this extension
occurs for the strengthened versions of both unrestricted and
restricted $n$-Craig Interpolation Property.

The import of this almost universal failure of Craig Interpolation
for stit logic can be seen sharper if one takes into an account
that the axiomatic system $\mathbb{S}$ for this logic, as given in
Section \ref{preliminaries} above, suggests that stit logic is an
extension of propositional multi-S5. It is a well-known fact, see
e.g. \cite{vB1997}, that multi-S5 has the Craig Interpolation
Property.\footnote{In fact, multi-S5 even enjoys strong
interpolation in the sense that one may demand that only shared S5
modalities occur in the interpolant for a given valid
implication.} Thus the fact that this property fails for stit
logic highlights the fact that the difference between multi-S5 and
stit logic is quite substantial. Another conclusion is that, in
extending multi-S5, stit logic upsets the delicate balance between
deductive power and expressivity which is present in multi-S5.

As the main problem for the future research remains the question
whether unrestricted $n$-Craig Interpolation Property holds for
all or at least some $n \leq 3$ and whether the natural Robinson
Consistency companions of the $n$-Craig Interpolation Property can
be distinguished from this property on this, rather limited, set
of values.

\section{Acknowledgements}
To be inserted.

}


\begin{thebibliography}{1}

\bibitem{balbiani}
P.~Balbiani, A.~Herzig, and E.~Troquard.
\newblock Alternative axiomatics and complexity of deliberative stit theories.
\newblock {\em Journal of Philosophical Logic}, 37(4):387--406, 2008.


\bibitem{belnap2001facing}
N.~Belnap, M.~Perloff, and M.~Xu.
\newblock {\em Facing the Future: Agents and Choices in Our Indeterminist
  World}.
\newblock Oxford University Press, 2001.

\bibitem{broersen}
J.~Broersen.
\newblock Deontic epistemic stit logic distinguishing modes of mens rea.
\newblock {\em Journal of Applied Logic}, 9(2):137--152, 2011.

\bibitem{chellas}
B.~Chellas.
\newblock {\em The Logical Form of Imperatives}.
\newblock Perry Lane Press, Stanford, CA, 1969.

\bibitem{gabbay}
D.~Gabbay and L.~Maksimova.
\newblock {\em Interpolation and Definability: Modal and Intuitionistic Logics}.
\newblock Oxford University Press, 2005.

\bibitem{HerzigSchwarzent}  A.~Herzig and F.~Schwarzentruber.
\newblock 'Properties of logics of individual
and group agency', in: C.~Areces and R.~Goldblatt (eds.), {\em
Advances in Modal Logic, Volume 7} , College Publications, London,
133--149, 2008.

\bibitem{horty2001agency}
J.~Horty.
\newblock {\em Agency and Deontic Logic}.
\newblock Oxford University Press, USA, 2001.

\bibitem{horty1995}
J.~Horty and N.~Belnap.
\newblock The deliberative stit: a study of action, omission, ability and obligation.
\newblock {\em Journal of Philosophical Logic}, 24:583--644, 1995.

\bibitem{lorini}
E.~Lorini.
\newblock `Temporal stit logic and its application to
normative reasoning. \newblock {\em Journal of Applied
Non-Classical Logics}, 23(4):372--399, 2013.

\bibitem{OLWA}
G.~Olkhovikov and H.~Wansing.
\newblock Inference as doxastic agency. Part I: The basics of justification
  stit logic.
  \newblock {\em Studia Logica}, Online first: https://doi.org/10.1007/s11225-017-9779z, 2018.

\bibitem{OLWA2}
G.~Olkhovikov and H.~Wansing.
\newblock Inference as doxastic agency. Part II: Ramifications and refinements.
\newblock {\em Australasian Journal of Logic}, 14(4):408–438, 2017.

\bibitem{vB1997}
J.~van~Benthem.
\newblock Modal foundations for predicate logic.
\newblock {\em Logic Journal of the IGPL}, 5(2):259--286, 1997.

\bibitem{vK1986}
F.~von~Kutschera.
\newblock Bewirken.
\newblock {\em Erkenntnis}, 24:253--281, 1986.
\end{thebibliography}
\end{document}